\newtheorem{theorem}{Theorem}
\newtheorem{corollary}[theorem]{Corollary}
\newtheorem{definition}[theorem]{Definition}
\newtheorem{lemma}[theorem]{Lemma}
\newtheorem{proposition}[theorem]{Proposition}
\newenvironment{proof}[1][Proof]{\noindent\textbf{#1.} }{\ \rule{0.5em}{0.5em}}
\begin{document}

\title{A generalization of $p$-convexity and $q$-concavity on Banach lattices%
}
\author{F. Galaz-Fontes and J.L. Hern\'{a}ndez-Barradas \\
%EndAName
Centro de Investigaci\'{o}n en Matem\'{a}ticas (CIMAT)}
\maketitle

\begin{abstract}
In this paper, considering a real Banach sequence lattice $Y$ instead of a
Lebesgue sequence space $\ell ^{p}$ we generalize $p$-convexity of a linear
operator $T:E\rightarrow X,$ where $E$ is a Banach space and $X$ is a Banach
lattice. Then we prove that basic properties of $p$-convexity remain valid
for $Y$-convex linear operators. Analogous generalizations are given for $q$%
-concavity and $p$-summability and composition properties between these
operators are analyzed.
\end{abstract}

\noindent\textbf{Keywords} Banach lattices · Banach function spaces · $p$-convexity · \newline 
$p$-summability
\\
\\
\noindent\textbf{Mathematics Subject Classification} 46B42 · 47A30 · 47B10

\section{Introduction}

\bigskip Throughout this work we will consider only real vector spaces.
Recall a linear operator $T$ from a Banach space $E$ into a Banach function
space $X$ (see next section for definitions) is $p$-convex, where $1\leq
p<\infty ,$ if there exists a constant $C>0$ such that%
\begin{equation}
\left\Vert \left\Vert \left( Tw_{1},...,Tw_{n}\right) \right\Vert
_{p}\right\Vert _{X}\leq C\left( \sum_{j=1}^{n}\left\Vert w_{j}\right\Vert
_{E}^{p}\right) ^{1/p},\forall w_{1},...,w_{n}\in E\text{, }\forall n\in 
%TCIMACRO{\U{2115} }%
%BeginExpansion
\mathbb{N}
%EndExpansion
,  \label{1}
\end{equation}%
where $\left\Vert \left( Tw_{1},...,Tw_{n}\right) \right\Vert _{p}$ is the
function defined by%
\begin{equation}
\left\Vert \left( Tw_{1},...,Tw_{n}\right) \right\Vert _{p}\left( x\right)
=\left( \sum_{j=1}^{n}\left\vert Tw_{j}\left( x\right) \right\vert
^{p}\right) ^{1/p},\forall x\in \Omega .  \notag
\end{equation}%
Naturally, $T$ is $\infty $-convex if there exists a constant $C>0$ such
that 
\begin{equation}
\left\Vert \max_{1\leq j\leq n}\left\{ \left\vert Tw_{j}\right\vert \right\}
\right\Vert _{X}\leq C\max_{1\leq j\leq n}\left\{ \left\Vert
w_{j}\right\Vert _{E}\right\} ,\forall w_{1},...,w_{n}\in E\text{, }\forall
n\in 
%TCIMACRO{\U{2115} }%
%BeginExpansion
\mathbb{N}
%EndExpansion
,  \label{2}
\end{equation}%
where $\max_{1\leq j\leq n}\left\{ \left\vert Tw_{j}\right\vert \right\} $
is the function defined by%
\begin{equation}
\max_{1\leq j\leq n}\left\{ \left\vert Tw_{j}\right\vert \right\} \left(
x\right) =\max_{1\leq j\leq n}\left\{ \left\vert Tw_{j}\left( x\right)
\right\vert \right\} ,\forall x\in \Omega .  \notag
\end{equation}%
\indent\hspace{-0.05cm}It is well-known that, vias Krivine's functional
calculus introduced in \cite{krivine} $\left( \text{see Theorem \ref{krivin}
below}\right) ,$ expressions of the form $\left( \sum_{j=1}^{n}\left\vert
Tw_{j}\right\vert ^{p}\right) ^{1/p}$ are extended to general Banach
lattices and in this way the inequalities in $\left( \ref{1}\right) $ and $%
\left( \ref{2}\right) $ are also defined in the case that $X$ is a Banach
lattice. In this paper we generalize $p$-convexity by considering a norm in
a Banach sequence lattice $Y$ instead of the norm in $\ell ^{p},$ as we will
now explain.

\bigskip

Let $\left( \Omega ,\Sigma ,\mu \right) $ be a measure space, where $\Omega $
is a non empty set. Take $L^{0}\left( \mu \right) $ to be the vector space
of (equivalence classes of) measurable functions and denote by $\leq $ the $%
\mu $-almost everywhere order on $L^{0}\left( \mu \right) $. Then a $\mu $%
\textit{-Banach function space} ($\mu $-B.f.s.) $Y$ is an ideal of
measurable functions that is also a Banach\ space with an order-preserving
norm $\left\Vert \cdot \right\Vert _{Y}.$ That is, if $f\in L^{0}\left( \mu
\right) $ and $g\in Y$ are such that $\ \left\vert f\right\vert \leq
\left\vert g\right\vert ,$ then $f\in Y$ and $\left\Vert f\right\Vert
_{Y}\leq \left\Vert g\right\Vert _{Y}$. Notice a $\mu $-B.f.s. is a\textit{\
Banach lattice}. That is, a Riesz space with $\left\vert f\right\vert =f\vee
-f,$ that is also a Banach space with respect to an order-preserving norm.

\bigskip

Let us now consider the measure space $\left( 
%TCIMACRO{\U{2115} }%
%BeginExpansion
\mathbb{N}
%EndExpansion
,2^{%
%TCIMACRO{\U{2115} }%
%BeginExpansion
\mathbb{N}
%EndExpansion
},\mu _{\#}\right) ,$ where $\mu _{\#}$ is the counting measure. If $Y$ is a 
$\mu _{\#}$-B.f.s. and $e_{n}\in Y,n\in 
%TCIMACRO{\U{2115} }%
%BeginExpansion
\mathbb{N}
%EndExpansion
$, where $e_{n}$ is the $n$th canonical sequence, we say that $Y$ is a 
\textit{Banach sequence lattice (B.s.l}.). For every finite sequence $\left(
t_{1},...,t_{n}\right) \in 
%TCIMACRO{\U{211d} }%
%BeginExpansion
\mathbb{R}
%EndExpansion
^{n},n\in 
%TCIMACRO{\U{2115} }%
%BeginExpansion
\mathbb{N}
%EndExpansion
$ we will write $\left( t_{1},...,t_{n}\right) \in Y$ to express the
sequence $\left\{ t_{1},...,t_{n},0,...\right\} \in Y.$ Also, $\left\Vert
\cdot \right\Vert _{%
%TCIMACRO{\U{211d} }%
%BeginExpansion
\mathbb{R}
%EndExpansion
^{n},Y}$ will indicate the subspace norm of $Y$ restricted to $%
%TCIMACRO{\U{211d} }%
%BeginExpansion
\mathbb{R}
%EndExpansion
^{n}.$ Notice that Orlicz sequence spaces and Lebesgue spaces $\ell
^{p},1\leq p\leq \infty $ are B.s.l.. When $Y=\ell ^{p}$ we will write $%
\left\Vert \cdot \right\Vert _{%
%TCIMACRO{\U{211d} }%
%BeginExpansion
\mathbb{R}
%EndExpansion
^{n},p}$ instead of $\left\Vert \cdot \right\Vert _{%
%TCIMACRO{\U{211d} }%
%BeginExpansion
\mathbb{R}
%EndExpansion
^{n},\ell ^{p}}$.

\bigskip

We are now in position to present our generalization. Given a Banach lattice 
$X$, a Banach space $E$ and a B.s.l. $Y,$ we will say that a linear operator 
$T:$ $E\rightarrow X$ is $Y$\textit{-convex} if there exists a constant $C>0$
satisfying 
\begin{equation}
\left\Vert \left\Vert \left( Tw_{1},...,Tw_{n}\right) \right\Vert
_{Y}\right\Vert _{X}\leq C\left\Vert \left( \left\Vert w_{1}\right\Vert
_{E},...,\left\Vert w_{n}\right\Vert _{E}\right) \right\Vert _{Y},
\label{20071}
\end{equation}%
for each $n\in 
%TCIMACRO{\U{2115} }%
%BeginExpansion
\mathbb{N}
%EndExpansion
$ and $w_{1},...,w_{n}\in E.$ Here $\left\Vert \left(
Tw_{1},...,Tw_{n}\right) \right\Vert _{Y}\in X$ is obtained by using
Krivine's functional calculus with the norm $\left\Vert \cdot \right\Vert
_{Y}$ on the B.s.l. $Y.$\newline
\newline
\indent The corresponding definition for $Y$-concavity is now clear. A
linear operator $S:X\rightarrow E$ is $Y$\textit{-concave\ }if there exists
a constant $K>0$ satisfying 
\begin{equation}
\left\Vert \left( \left\Vert Sf_{1}\right\Vert _{E},...,\left\Vert
Sf_{n}\right\Vert _{E}\right) \right\Vert _{Y}\leq K\left\Vert \left\Vert
\left( f_{1},...,f_{n}\right) \right\Vert _{Y}\right\Vert _{X},
\label{20072}
\end{equation}%
for each $n\in 
%TCIMACRO{\U{2115} }%
%BeginExpansion
\mathbb{N}
%EndExpansion
$ and $f_{1},...,f_{n}\in X.$\newline

\bigskip

Observe that when we take $Y=\ell ^{p},1\leq p\leq \infty $ in the above
definition, conditions $\left( \ref{20071}\right) $ and $\left( \ref{20072}%
\right) $ are reduced to the respective classical $p$-convexity and $p$%
-concavity properties.

\bigskip

Our exposition is divided into 6 sections. After giving an introduction in
Section 1, in Section 2 we present the classical results and definitions
that will be used throughout this work. Given $n\in 
%TCIMACRO{\U{2115} }%
%BeginExpansion
\mathbb{N}
%EndExpansion
,$ let $\mathcal{H}_{n}$ denote the space of all real-valued continuous
functions $h$ on $%
%TCIMACRO{\U{211d} }%
%BeginExpansion
\mathbb{R}
%EndExpansion
^{n}$ which are homogeneous of degree 1, that is, $h(\lambda x)=\lambda h(x)$
for all $x\in 
%TCIMACRO{\U{211d} }%
%BeginExpansion
\mathbb{R}
%EndExpansion
^{n}$ and $\lambda \geq 0$. Let $X$ be a Banach lattice. Then, vias
Krivine's functional calculus, for every $x\in X^{n},$ an operator $\tau
_{n,x}:\mathcal{H}_{n}\rightarrow X$ is defined. So, when fixing $h\in 
\mathcal{H}_{n},$ an operator $\widetilde{h}:X^{n}\rightarrow X$ is
naturally induced by taking $\widetilde{h}\left( x\right) :=\tau
_{n,x}\left( h\right) .$ General properties of such operators are
established in this section.

\bigskip

Many properties for $\tau _{n,x}\left( h\right) ~$have been studied when $%
h=\left\Vert \cdot \right\Vert _{%
%TCIMACRO{\U{211d} }%
%BeginExpansion
\mathbb{R}
%EndExpansion
^{n},p}$ (\cite[Ch. 16]{Diestel}, \cite[Theorem 1.d.1]{Linden}, \cite[Lemma
2.51]{Okada}). In Section 3 we prove that most of these properties are also
valid for $h=\left\Vert \cdot \right\Vert _{%
%TCIMACRO{\U{211d} }%
%BeginExpansion
\mathbb{R}
%EndExpansion
^{n},Y}$. This gives us the function $\left\Vert \cdot \right\Vert
_{X^{n},Y,\tau }:X^{n}\rightarrow 
%TCIMACRO{\U{211d} }%
%BeginExpansion
\mathbb{R}
%EndExpansion
$ defined by%
\begin{equation*}
\left\Vert \left( x_{1},...,x_{n}\right) \right\Vert _{X^{n},Y,\tau
}:=\left\Vert \tau _{n,x}\left( \left\Vert \cdot \right\Vert _{%
%TCIMACRO{\U{211d} }%
%BeginExpansion
\mathbb{R}
%EndExpansion
^{n},Y}\right) \right\Vert _{X}.
\end{equation*}%
We prove that this function is an order-preserving norm for which $X^{n}$ is
a Banach lattice and that it is equivalent to the natural induced norm $%
\left\Vert \cdot \right\Vert _{X^{n},Y}:X^{n}\rightarrow 
%TCIMACRO{\U{211d} }%
%BeginExpansion
\mathbb{R}
%EndExpansion
$ given by%
\begin{equation*}
\left\Vert \left( x_{1},...,x_{n}\right) \right\Vert _{X^{n},Y}:=\left\Vert
\left( \left\Vert x_{1}\right\Vert _{X},...,\left\Vert x_{n}\right\Vert
_{X}\right) \right\Vert _{Y}.
\end{equation*}

In Section 4 we use the norms $\left\Vert \cdot \right\Vert _{X^{n},Y}$ and $%
\left\Vert \cdot \right\Vert _{X^{n},Y,\tau }$ to introduce the concept of $%
Y $\textit{-convexity }as in (\ref{20071})\textit{.} Next we study the space
of $Y$-convex operators from a Banach space $E$ into a Banach lattice $X,$ $%
\mathcal{K}^{Y}\left( E,X\right) ,$ and consider its natural norm $%
\left\Vert \cdot \right\Vert _{\mathcal{K}^{Y}}.$ We prove that $\mathcal{K}%
^{Y}\left( E,X\right) $ is a Banach space and that it is continuously
included in the space of linear and continuous operators $\mathcal{L}\left(
E,X\right) $. The section concludes by proving that the space $\mathcal{K}%
^{Y}\left( E,X\right) $ is always non-trivial since it contains every finite
rank operator from $E$ into $X.$ Analogous results are presented for $Y$%
-concavity and the space of $Y$-concave operators, $\mathcal{K}_{Y}\left(
X,E\right) $. Since we have extended many basic properties concerning $p$%
-convexity to $Y$-convexity, it is naturally expected that several results
in recent works related to $p$-convexity can also be generalized, for
example those that appear in \cite{weaktopo}.

\bigskip

In Section 5 we analyze composition properties of $Y$-convex operators.
Given Banach lattices $X,W,$ we introduce the concept of $Y$\textit{-regular
operator }which generalizes that of $p$-regular operator, introduced by
Bukhvalov in \cite{Bukhvalov}, as indicated in \cite{p-regularity}. Given
Banach lattices $X,W,$ a linear operator $T:X\rightarrow W$ is $Y$-regular
if there exists a constant $C>0$ satisfying%
\begin{equation*}
\left\Vert \left( Tx_{1},...,Tx_{n}\right) \right\Vert _{W^{n},Y,\tau }\leq
C\left\Vert \left( x_{1},...,x_{n}\right) \right\Vert _{X^{n},Y,\tau },n\in 
%TCIMACRO{\U{2115} }%
%BeginExpansion
\mathbb{N}
%EndExpansion
,\left( x_{1},...,x_{n}\right) \in X^{n}.
\end{equation*}%
Let the space of $Y$-regular operators from $X$ into $W$ be denoted by $%
\Lambda _{Y}\left( X,W\right) $ and its natural norm by $\left\Vert \cdot
\right\Vert _{\Lambda _{Y}}$. We prove that $\Lambda _{Y}\left( X,W\right) $
is a Banach space and that it is continuously included in $\mathcal{L}\left(
X,W\right) $. Next, it is proven that $\Lambda _{Y}\left( X,W\right) $
contains every \textit{regular operator,} i.e. difference of two positive
operators, and that $Y$-convexity and $Y$-concavity are preserved under left
and right compositions with $Y$-regular operators. We conclude the section
by showing that $Y$-convexity and $Y$-concavity are invariant under \textit{%
lattice isomorphisms,} i.e., order preserving isomorphisms. It is worthwhile
to recall that $p$-regular operators are examples of $(p,q)$\textit{-regular
operators} when $p=q$ (see $\cite{E.S.P. Tradacete}).$ Although our work
only extends the case $p=q$, the theory developed can be used to define a
generalization for $\left( p,q\right) $-regular operators, considering
B.s.l. $Y_{1}$ and $Y_{2}$ instead of $\ell ^{p}$ and $\ell ^{q}.$

\bigskip

In Section 6 we generalize the important notion of \textit{absolutely }$p$%
\textit{-summing operator.} Given a Banach space $E$ and a Banach lattice $%
X, $ an operator $S:X\rightarrow E$ is \textit{absolutely }$Y$\textit{%
-summing} if there exists a constant $C>0$ such that, for each $n\in 
%TCIMACRO{\U{2115} }%
%BeginExpansion
\mathbb{N}
%EndExpansion
$ and $x_{1},...,x_{n}\in X,$%
\begin{equation*}
\left\Vert \left( Sx_{1},...,Sx_{n}\right) \right\Vert _{E^{n},Y}\leq
C\sup_{x^{\ast }\in B_{X^{\ast }}}\left\{ \left\Vert \left( \left\langle
x_{1},x^{\ast }\right\rangle ,...,\left\langle x_{n},x^{\ast }\right\rangle
\right) \right\Vert _{Y}\right\} .
\end{equation*}%
Let the space of absolutely $Y$-summing operators from $X$ into $E$ be
denoted by $\Pi _{Y}\left( X,E\right) $ and its natural norm by $\left\Vert
\cdot \right\Vert _{\Pi _{Y}}.$ We prove that the space $\Pi _{Y}\left(
X,E\right) $ contains every finite rank operator from $X$ into $E$ and that $%
\Pi _{Y}\left( X,E\right) $ is a Banach space continuously included in $%
\mathcal{K}_{Y}\left( X,E\right) $. The work concludes by proving that $\Pi
_{Y}\left( X,E\right) $ is an ideal in the space of linear and continuous
operators. As for $p$-regular operators, absolutely $p$-summing operators
are an special case of $(p,q)$\textit{-summing operators} when $p=q$ (see $%
\cite{p-regularity}).$ Again, we only consider the case $p=q,$ but a
generalization for $\left( p,q\right) $-summing operators can also be done,
considering B.s.l. $Y_{1}$ and $Y_{2}$ instead of $\ell ^{p}$ and $\ell
^{q}. $ Therefore, the possibility of obtaining a form of Pietsch%
%TCIMACRO{\U{b4}}%
%BeginExpansion
\'{}%
%EndExpansion
s factorization theorem and further extensions of several results related to
absolutely summability arise for future study (see \cite{A class of summing}%
, \cite{Q1}, \cite{PMR}, \cite{Mastylo}).

\bigskip

To conclude this introduction we want to mention that, as in the classical
case, the properties of $Y$-convexity and $Y$-concavity can be expressed in
terms of the continuity of an associated linear operator between certain
vector sequence lattices and results of this kind will appear elsewhere.

\section{Preliminary results and Krivine's funcional calculus}

Throughout this work we will indicate the \textit{dual pairing} on a normed
space $V$ by $\left\langle \cdot ,\cdot \right\rangle _{V\times V^{\ast
}}:V\times V^{\ast }\rightarrow 
%TCIMACRO{\U{211d} }%
%BeginExpansion
\mathbb{R}
%EndExpansion
.$ Also we will denote by $\mathcal{L}\left( V_{1},V_{2}\right) $ the space
of continuous linear operators between two normed spaces $V_{1},V_{2}.$ Some
of the definitions and results presented in this section can be consulted in 
\cite[Ch. 1]{Aliprantis} and \cite[Ch. 1 and 2]{Zaanen1}.

\bigskip

Given a Riesz space $X,$ we will denote the\textit{\ positive cone of }$X$
by $X^{+}$ i.e., 
\begin{equation*}
X^{+}:=\left\{ x\in X:x\geq 0\right\} .
\end{equation*}%
\indent Recall a linear operator between Riesz spaces $P:X\rightarrow Z$ is 
\textit{positive} if $P\left( X^{+}\right) \subset Z^{+}$ (see \cite[Ch. 1]%
{Aliprantis})$.$ Also, a linear operator $T:E\rightarrow F$ is \textit{%
regular} if it can be written as a difference of two positive operators. The
space of all regular operators from $X$ into $Z$ is denoted by $\mathcal{L}%
_{r}\left( X,Z\right) .$

\bigskip

Observe that a positive operator $P:X\rightarrow Z$ satisfies%
\begin{equation}
\left\vert Px\right\vert \leq P\left\vert x\right\vert ,\forall x\in X
\label{positive}
\end{equation}%
and, if in addition $X,Z$ are Banach lattices, then $P:X\rightarrow Z$ is
continuous.

\bigskip

The following well-known property of $\mu $-B.f.s. will be useful for our
presentation \cite[Prop. 2.2]{Okada}.

\begin{theorem}
Let $Y$ be a $\mu $-B.f.s., $\left\{ f_{n}\right\} _{n=1}^{\infty }\subset Y$
and $f\in Y.$ If $f_{n}\overset{Y}{\rightarrow }f$, then there exists a
subsequence $\left\{ f_{n_{k}}\right\} _{k=1}^{\infty }$ such that $%
f_{n_{k}}\rightarrow f$ $\mu $-a.e..\label{convmedida}
\end{theorem}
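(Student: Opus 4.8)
The plan is to reduce the statement to the classical Riesz--Fischer type argument, extracting a rapidly convergent subsequence and showing that the associated series of increments converges $\mu$-a.e. First I would use $f_{n}\overset{Y}{\rightarrow }f$ to select indices $n_{1}<n_{2}<\cdots $ with $\left\Vert f_{n_{k}}-f\right\Vert _{Y}\leq 2^{-k}$ for every $k$. Setting $g_{m}:=\sum_{k=1}^{m}\left\vert f_{n_{k}}-f\right\vert $, the ideal property together with the triangle inequality gives $\left\Vert g_{m}\right\Vert _{Y}\leq \sum_{k=1}^{m}2^{-k}\leq 1$, and for $m>m^{\prime }$ one has $\left\Vert g_{m}-g_{m^{\prime }}\right\Vert _{Y}\leq \sum_{k=m^{\prime }+1}^{m}2^{-k}$, so $\{g_{m}\}$ is a Cauchy sequence in the Banach space $Y$. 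By completeness it converges in norm to some $h\in Y$.

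The decisive point is to compare this norm limit $h$ with the pointwise limit $g:=\sum_{k=1}^{\infty }\left\vert f_{n_{k}}-f\right\vert \in L^{0}\left( \mu \right) $, which is a priori possibly infinite on a set of positive measure, and here I would exploit that $Y$ is a Banach lattice. Since the lattice operations of a normed Riesz space are norm-continuous (from $\left\vert \left\vert u\right\vert -\left\vert v\right\vert \right\vert \leq \left\vert u-v\right\vert $ and the order-preserving norm one obtains $\left\Vert u^{-}-v^{-}\right\Vert _{Y}\leq \left\Vert u-v\right\Vert _{Y}$), the positive cone $Y^{+}$ is norm-closed. For each fixed $m^{\prime }$ the elements $g_{m}-g_{m^{\prime }}$ with $m\geq m^{\prime }$ are nonnegative and converge in norm to $h-g_{m^{\prime }}$, so $h-g_{m^{\prime }}\in Y^{+}$, that is $g_{m^{\prime }}\leq h$ $\mu $-a.e. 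Letting $m^{\prime }\rightarrow \infty $ yields $g\leq h$ $\mu $-a.e.

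Because $h\in Y\subset L^{0}\left( \mu \right) $, it is finite $\mu $-a.e., whence $g<\infty $ $\mu $-a.e. Thus the series $\sum_{k}\left\vert f_{n_{k}}-f\right\vert $ converges $\mu $-a.e., which forces $\left\vert f_{n_{k}}-f\right\vert \rightarrow 0$ $\mu $-a.e., i.e. $f_{n_{k}}\rightarrow f$ $\mu $-a.e., as required.

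I expect the main obstacle to be the step establishing the domination $g\leq h$ $\mu $-a.e.: one is tempted to invoke the Fatou property or a monotone convergence principle, neither of which is part of the definition of a $\mu $-B.f.s. given above. The clean way around this is precisely the norm-closedness of the positive cone, which holds in any Banach lattice and converts the monotone, norm-Cauchy sequence $\{g_{m}\}$ into a pointwise almost-everywhere bound, avoiding any circular appeal to the statement being proved.
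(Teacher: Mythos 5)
Your proof is correct, but there is nothing in the paper to compare it against: the paper does not prove this statement at all, it quotes it as a well-known property of $\mu$-B.f.s.\ with a reference to \cite[Prop. 2.2]{Okada}. What you have produced is a self-contained proof of the cited fact, and it follows the classical Riesz--Fischer pattern: extract a rapidly convergent subsequence, form the partial sums $g_{m}=\sum_{k=1}^{m}\left\vert f_{n_{k}}-f\right\vert $, obtain a norm limit $h\in Y$ by completeness, and dominate the pointwise (possibly infinite) sum $g$ by $h$. The one genuinely delicate point is exactly the one you flag: the paper's definition of a $\mu$-B.f.s.\ includes no Fatou or monotone-convergence property, so the domination $g\leq h$ $\mu$-a.e.\ cannot be obtained that way. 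Your substitute is sound: in any Banach lattice the map $u\mapsto u^{-}$ is $1$-Lipschitz (from $\left\vert u^{-}-v^{-}\right\vert \leq \left\vert u-v\right\vert $ and the lattice norm), hence $Y^{+}$ is norm-closed; applying this to the nonnegative tails $g_{m}-g_{m^{\prime }}\rightarrow h-g_{m^{\prime }}$ gives $g_{m^{\prime }}\leq h$ $\mu$-a.e.\ for each $m^{\prime }$, and since the order on $Y$ is the $\mu$-a.e.\ order this is a genuine pointwise bound. The only step you leave implicit is the passage from ``$g_{m^{\prime }}\leq h$ a.e.\ for each $m^{\prime }$'' to ``$g\leq h$ a.e.'', which requires discarding a countable union of null sets before using the monotone pointwise convergence $g_{m^{\prime }}\uparrow g$; this is routine. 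With that remark made explicit, your argument is complete and is essentially the proof one finds in the cited reference, so it fills in precisely what the paper delegates to the literature.
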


\begin{corollary}
Let $Y\ $be a $\mu $-B.f.s., $f\in L^{0}(\mu )$ and $\left\{ f_{n}\right\}
_{n=1}^{\infty }\subset Y$ a convergent sequence in $Y.$ If $%
f_{n}\rightarrow f$ $\mu $-a.e., then $f\in Y$ and $f_{n}\overset{Y}{%
\rightarrow }f$.\label{coromuctp}
\end{corollary}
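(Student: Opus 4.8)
The plan is to prove Corollary~\ref{coromuctp}, which asserts that if $\{f_n\}\subset Y$ converges in $Y$ to some element (necessarily in $Y$) and also converges $\mu$-a.e.\ to $f\in L^0(\mu)$, then $f\in Y$ and $f_n\overset{Y}{\rightarrow}f$. The natural route is to deduce this from Theorem~\ref{convmedida}. Let me think about what's really being claimed and organize the argument.

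Let me reconsider. We're given $\{f_n\}$ converges in $Y$, so there is some $g\in Y$ with $f_n\overset{Y}{\rightarrow}g$ (completeness of $Y$ as a Banach space guarantees the limit exists in $Y$; or it's part of the hypothesis that the sequence converges in $Y$). We are separately told $f_n\rightarrow f$ $\mu$-a.e. The goal is to identify $f=g$ $\mu$-a.e., which then gives $f\in Y$ and $f_n\overset{Y}{\rightarrow}f$.

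First I would apply Theorem~\ref{convmedida} to the $Y$-convergent sequence $f_n\overset{Y}{\rightarrow}g$: this yields a subsequence $\{f_{n_k}\}$ with $f_{n_k}\rightarrow g$ $\mu$-a.e. Second, since the full sequence satisfies $f_n\rightarrow f$ $\mu$-a.e., the subsequence also satisfies $f_{n_k}\rightarrow f$ $\mu$-a.e. By uniqueness of a.e.\ limits of pointwise convergent sequences, $f=g$ $\mu$-a.e., hence $f$ and $g$ represent the same equivalence class in $L^0(\mu)$. Consequently $f\in Y$ (as $g\in Y$) and $f_n\overset{Y}{\rightarrow}f$ follows immediately from $f_n\overset{Y}{\rightarrow}g$.

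The only technical care needed is in the first step—ensuring the sequence genuinely has a $Y$-limit to which Theorem~\ref{convmedida} applies. If the hypothesis ``convergent sequence in $Y$'' is read as asserting convergence to a limit in $Y$, then no completeness is invoked and the argument is essentially immediate; otherwise one first uses that $Y$ is a Banach space to produce $g\in Y$. I do not expect any real obstacle here: the corollary is a routine combination of the measure-convergence subsequence extraction from Theorem~\ref{convmedida} with the elementary fact that a.e.\ limits are unique. The one subtlety worth a sentence is the passage between equivalence classes and genuine functions when equating $f$ and $g$, which is standard for $L^0(\mu)$.
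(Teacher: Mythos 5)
Your argument is correct and is precisely the intended deduction: the paper states Corollary~\ref{coromuctp} without proof as an immediate consequence of Theorem~\ref{convmedida}, and your route (extract an a.e.-convergent subsequence toward the $Y$-limit $g$, then identify $f=g$ $\mu$-a.e.\ by uniqueness of a.e.\ limits) is exactly the standard argument being left implicit. No gaps; your remark about interpreting ``convergent sequence in $Y$'' as having its limit in $Y$ is the right reading of the hypothesis.
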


\bigskip

\bigskip We now continue describing Krivine's functional calculus (\cite[Ch.
16]{Diestel}, \cite[Theorem 1.d.1]{Linden}, \cite[Lemma 2.51]{Okada}) and
some of its general results.\bigskip

Given $n\in 
%TCIMACRO{\U{2115} }%
%BeginExpansion
\mathbb{N}
%EndExpansion
,$ recall $\mathcal{H}_{n}$ is the space of continuous functions $h:%
%TCIMACRO{\U{211d} }%
%BeginExpansion
\mathbb{R}
%EndExpansion
^{n}\rightarrow 
%TCIMACRO{\U{211d} }%
%BeginExpansion
\mathbb{R}
%EndExpansion
$ that are homogeneous of degree $1$ i.e., for every $\lambda \geq 0,$ $%
h\left( \lambda t_{1},...,\lambda t_{n}\right) =\lambda h\left(
t_{1},...,t_{n}\right) .$ Observe that every norm on $%
%TCIMACRO{\U{211d} }%
%BeginExpansion
\mathbb{R}
%EndExpansion
^{n}$ is in $\mathcal{H}_{n},$ in particular each $p$-norm, $\left\Vert
\cdot \right\Vert _{%
%TCIMACRO{\U{211d} }%
%BeginExpansion
\mathbb{R}
%EndExpansion
^{n},p},1\leq p\leq \infty .$

\bigskip

Let us denote by $S^{n-1}\subset 
%TCIMACRO{\U{211d} }%
%BeginExpansion
\mathbb{R}
%EndExpansion
^{n}$ the unit sphere with respect to the supremum norm $\left\Vert \cdot
\right\Vert _{%
%TCIMACRO{\U{211d} }%
%BeginExpansion
\mathbb{R}
%EndExpansion
^{n},\infty },$ given by $\left\Vert \left( a_{1},...,a_{n}\right)
\right\Vert _{%
%TCIMACRO{\U{211d} }%
%BeginExpansion
\mathbb{R}
%EndExpansion
^{n},\infty }:=\max_{1\leq j\leq n}\left\{ \left\vert a_{j}\right\vert
\right\} ,$ $\left( a_{1},...,a_{n}\right) \in 
%TCIMACRO{\U{211d} }%
%BeginExpansion
\mathbb{R}
%EndExpansion
^{n}.$ Then we define the norm $\left\Vert \cdot \right\Vert _{\mathcal{H}%
_{n}}$ on $\mathcal{H}_{n}$ by $\left\Vert h\right\Vert _{\mathcal{H}%
_{n}}:=\sup \left\{ \left\vert h\left( t\right) \right\vert :t\in
S^{n-1}\right\} .$ Thus, with the pointwise order, we have that $\mathcal{H}%
_{n}$ is a Banach lattice.

\bigskip

For each $n\in 
%TCIMACRO{\U{2115} }%
%BeginExpansion
\mathbb{N}
%EndExpansion
$ and $1\leq j\leq n$ we define the $j$th projection $\pi _{n,j}:%
%TCIMACRO{\U{211d} }%
%BeginExpansion
\mathbb{R}
%EndExpansion
^{n}\rightarrow 
%TCIMACRO{\U{211d} }%
%BeginExpansion
\mathbb{R}
%EndExpansion
$ by $\pi _{n,j}\left( t_{1},...,t_{n}\right) =t_{j}.$ Clearly $\pi
_{n,j}\in \mathcal{H}_{n},j=1,...,n.$

\begin{theorem}
(Krivine's functional calculus) Let $X$ be a Banach lattice, $n\in 
%TCIMACRO{\U{2115} }%
%BeginExpansion
\mathbb{N}
%EndExpansion
$ and $x=(x_{1},...,x_{n})\in X^{n}.$ Then, there exists a unique linear map 
$\tau _{n,x}:\label{krivin}$ $\mathcal{H}_{n}\rightarrow X$ such
that\smallskip \newline
\indent$i)$ ${\large \tau }_{n,x}\left( \pi _{n,j}\right) {\large =x}_{j}$
for $1\leq j\leq n.$ \smallskip \newline
\indent$ii)$ ${\Large \tau }_{n,x}$ is order-preserving i.e.,%
\begin{equation*}
\tau _{n,x}\left( h_{1}\vee h_{2}\right) =\tau _{n,x}\left( h_{1}\right)
\vee \tau _{n,x}\left( h_{2}\right) ,\forall h_{1},h_{2}\in \mathcal{H}_{n}.
\end{equation*}
\end{theorem}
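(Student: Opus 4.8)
The plan is to construct $\tau _{n,x}$ by reducing $X$ to a concrete space of continuous functions via Kakutani's representation theorem, and then to deduce uniqueness from a density argument combined with the automatic continuity of lattice homomorphisms.

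For existence, I would set $u:=\vert x_{1}\vert \vee \cdots \vee \vert x_{n}\vert \in X^{+}$ and pass to the principal ideal $X_{u}:=\{y\in X:\vert y\vert \leq \lambda u\text{ for some }\lambda \geq 0\}$, normed by the gauge $\Vert y\Vert _{u}:=\inf \{\lambda >0:\vert y\vert \leq \lambda u\}$, whose closed unit ball is the order interval $[-u,u]$. Then $(X_{u},\Vert \cdot \Vert _{u})$ is an AM-space with order unit $u$, so by Kakutani's representation theorem it is lattice isometric to $C(K)$ for some compact Hausdorff space $K$, with $u$ corresponding to the constant function $\mathbf{1}$. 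Writing $\widehat{x}_{j}\in C(K)$ for the image of $x_{j}$, so that $\vert \widehat{x}_{j}\vert \leq \mathbf{1}$ because $\vert x_{j}\vert \leq u$, I would define
\[
\tau _{n,x}(h):=h(\widehat{x}_{1},\ldots ,\widehat{x}_{n}),\qquad h\in \mathcal{H}_{n},
\]
meaning the function $k\mapsto h(\widehat{x}_{1}(k),\ldots ,\widehat{x}_{n}(k))$ on $K$. Since each $\widehat{x}_{j}$ is continuous and $h$ is continuous on $\mathbb{R}^{n}$, this is an element of $C(K)\cong X_{u}\subseteq X$. Linearity in $h$ and property $i)$, namely $\tau _{n,x}(\pi _{n,j})=\widehat{x}_{j}$, which corresponds to $x_{j}$, are immediate, and $ii)$ holds because both the lattice operations of $C(K)$ and the substitution $h\mapsto h(\widehat{x}(\cdot ))$ act pointwise, while the Kakutani isomorphism preserves $\vee $.

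For uniqueness, suppose $\sigma :\mathcal{H}_{n}\rightarrow X$ is linear and satisfies $i)$ and $ii)$. By $ii)$, $\sigma $ is a lattice homomorphism, hence positive and such that $\vert \sigma (h)\vert =\sigma (\vert h\vert )$. Using the pointwise estimate $\vert h\vert \leq \Vert h\Vert _{\mathcal{H}_{n}}(\vert \pi _{n,1}\vert \vee \cdots \vee \vert \pi _{n,n}\vert )$, which follows from homogeneity and the definition of $\Vert \cdot \Vert _{\mathcal{H}_{n}}$, together with $i)$, one obtains $\vert \sigma (h)\vert \leq \Vert h\Vert _{\mathcal{H}_{n}}\,u$ and therefore $\Vert \sigma (h)\Vert _{X}\leq \Vert h\Vert _{\mathcal{H}_{n}}\Vert u\Vert _{X}$; thus $\sigma $ is automatically continuous. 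It then suffices to determine $\sigma $ on a dense subset of $\mathcal{H}_{n}$. I would take the sublattice $\mathcal{S}\subseteq \mathcal{H}_{n}$ generated by $\pi _{n,1},\ldots ,\pi _{n,n}$ under addition, scalar multiplication, $\vee $ and $\wedge $; on $\mathcal{S}$ the values of $\sigma $ are forced by $i)$, linearity and $ii)$ (writing $h_{1}\wedge h_{2}=-((-h_{1})\vee (-h_{2}))$). Finally, restricting to the compact sphere $S^{n-1}$ and using homogeneity, the density of $\mathcal{S}$ in $\mathcal{H}_{n}$ follows from the lattice form of the Stone--Weierstrass theorem, since the projections already separate the points of $S^{n-1}$. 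Any two maps satisfying $i)$ and $ii)$ are then continuous and agree on the dense set $\mathcal{S}$, hence coincide.

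The main obstacle is the existence step and, within it, the reduction of the AM-space $X_{u}$ to $C(K)$ through Kakutani's theorem, which is precisely what legitimizes a pointwise definition of the calculus on an abstract Banach lattice. A secondary technical point is the density of $\mathcal{S}$ in $\mathcal{H}_{n}$: since homogeneity prevents adding constants, the separation-of-values hypothesis in the Stone--Weierstrass argument must be supplied by lattice combinations of linear functionals rather than by a single affine function.
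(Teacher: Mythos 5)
Your proposal is correct, but there is nothing in the paper to compare it against: the paper does not prove this theorem, it states it as background and attributes it to the literature (\cite[Ch. 16]{Diestel}, \cite[Theorem 1.d.1]{Linden}, \cite[Lemma 2.51]{Okada}). What you have written is essentially the classical argument from those sources: represent the principal ideal $X_{u}$, $u=|x_{1}|\vee \cdots \vee |x_{n}|$, as a $C(K)$ via Kakutani's theorem with $u\mapsto \mathbf{1}$, define the calculus pointwise there, and get uniqueness from the bound $|\sigma (h)|\leq \Vert h\Vert _{\mathcal{H}_{n}}\,u$ together with density of the vector sublattice $\mathcal{S}$ generated by the projections. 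Two points in your sketch deserve to be made explicit. First, to invoke Kakutani's theorem you need $(X_{u},\Vert \cdot \Vert _{u})$ to be complete; this is where norm completeness of $X$ enters (order intervals in a Banach lattice are norm closed, so a $\Vert \cdot \Vert _{u}$-Cauchy sequence, being $\Vert \cdot \Vert _{X}$-Cauchy, converges in $X$ and its limit stays in $X_{u}$ with $\Vert \cdot \Vert _{u}$-convergence). Second, in the Kakutani--Stone density step, separation of points by the projections is not by itself sufficient; the lattice form of Stone--Weierstrass requires two-point value interpolation, and the only delicate pair is an antipodal one $s,-s\in S^{n-1}$ (the only linearly dependent pair of distinct points of $S^{n-1}$), where no single linear functional can take two arbitrarily prescribed values. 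Your closing remark that lattice combinations are needed is exactly right, and it can be made concrete: if $\ell $ is linear with $\ell (s)=1$, then $a\,(\ell \vee 0)+b\,((-\ell )\vee 0)$ takes the value $a$ at $s$ and $b$ at $-s$, while linearly independent pairs are handled by plain linear functionals. With these details supplied, your proof is a complete and faithful reconstruction of the standard one the paper cites.
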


\bigskip

Given $h\in $ $\mathcal{H}_{n}$, Krivine's functional calculus determines
the function $\widetilde{h}:X^{n}\rightarrow X$ given by $\widetilde{h}(x):=$
${\large \tau }_{n,x}\left( h\right) $. When there is no risk of confusion
we will write $h$ instead of $\widetilde{h}.$ We will now analyze the
function $h:X^{n}\rightarrow X.$

\begin{definition}
For $n,m\in 
%TCIMACRO{\U{2115} }%
%BeginExpansion
\mathbb{N}
%EndExpansion
$ we define%
\begin{equation*}
\mathcal{H}_{n}^{m}:=\left\{ G:%
%TCIMACRO{\U{211d} }%
%BeginExpansion
\mathbb{R}
%EndExpansion
^{n}\rightarrow 
%TCIMACRO{\U{211d} }%
%BeginExpansion
\mathbb{R}
%EndExpansion
^{m}\text{ s.t. }g_{j}\in \mathcal{H}_{n},\forall \text{ }1\leq j\leq
m\right\} ,
\end{equation*}%
where $g_{j}=\pi _{m,j}\circ G\mathfrak{.}$
\end{definition}

\bigskip

Given $G=\left( g_{1},...,g_{m}\right) \in \mathcal{H}_{n}^{m}$ we have that%
\begin{equation*}
G\left( x\right) :=\left( g_{1}\left( x\right) ,...,g_{m}\left( x\right)
\right) ,\forall x\in X^{n}.
\end{equation*}%
\indent Note that $\mathcal{H}_{n}^{m}$ is a vector space, on which we
define the norm $\left\Vert \cdot \right\Vert _{\mathcal{H}_{n}^{m}}$ by%
\begin{equation*}
\left\Vert G\right\Vert _{\mathcal{H}_{n}^{m}}:=\max_{1\leq j\leq m}\left\{
\left\Vert g_{j}\right\Vert _{\mathcal{H}_{n}}\right\} ,\forall G\in 
\mathcal{H}_{n}^{m}.
\end{equation*}

Let $G\in \mathcal{H}_{n}^{m}.$ Then for each $h\in \mathcal{H}_{m}$ we can
consider the composition $h\circ G:%
%TCIMACRO{\U{211d} }%
%BeginExpansion
\mathbb{R}
%EndExpansion
^{n}\rightarrow 
%TCIMACRO{\U{211d} }%
%BeginExpansion
\mathbb{R}
%EndExpansion
$. So we define the operator $J_{G}$ by%
\begin{equation*}
J_{G}\left( h\right) :=h\circ G,\forall h\in \mathcal{H}_{m}.
\end{equation*}%
It follows that 
\begin{equation}
J_{G}\left( h\right) \in \mathcal{H}_{n}.  \label{smt2}
\end{equation}%
Furthermore, given $h_{1},h_{2}\in \mathcal{H}_{m}$ and $\lambda \in 
%TCIMACRO{\U{211d} }%
%BeginExpansion
\mathbb{R}
%EndExpansion
$ 
\begin{equation*}
J_{G}\left( \lambda h_{1}+h_{2}\right) =\left( \lambda h_{1}+h_{2}\right)
\circ G=\lambda \left( h_{1}\circ g\right) +h_{2}\circ G=\lambda J_{G}\left(
h_{1}\right) +J_{G}\left( h_{2}\right)
\end{equation*}%
and%
\begin{equation*}
J_{G}\left( h_{1}\vee h_{2}\right) =\left( h_{1}\vee h_{2}\right) \circ
G=\left( h_{1}\circ G\right) \vee \left( h_{2}\circ G\right) =J_{G}\left(
h_{1}\right) \vee J_{G}\left( h_{2}\right) .
\end{equation*}%
\indent Hence, $J_{G}:\mathcal{H}_{m}\rightarrow \mathcal{H}_{n}$ is an
order-preserving linear operator.

\begin{lemma}
Let $n,m\in 
%TCIMACRO{\U{2115} }%
%BeginExpansion
\mathbb{N}
%EndExpansion
$ and $G\in \mathcal{H}_{n}^{m}.$ Then, for each $x=\left(
x_{1},...,x_{n}\right) \in X^{n},$\label{lematmm}\newline
\begin{equation}
{\large \tau }_{m,G\left( x\right) }={\large \tau }_{n,x}\circ J_{G}.
\label{3agos}
\end{equation}
\end{lemma}

\begin{proof}
Since both ${\large \tau }_{n,x}$ and $J_{G}$ are order preserving linear
operators, its composition ${\large \tau }_{n,x}\circ J_{G}:\mathcal{H}%
_{m}\rightarrow X$ is an order preserving linear operator. Next let us fix $%
1\leq j\leq m.$ Then, 
\begin{equation*}
{\large \tau }_{n,x}\circ J_{G}\left( \pi _{m,j}\right) ={\large \tau }%
_{n,x}\left( \pi _{m,j}\circ G\right) ={\large \tau }_{n,x}\left(
g_{j}\right) =g_{j}\left( x\right) ={\large \tau }_{m,G\left( x\right)
}\left( \pi _{m,j}\right) .
\end{equation*}%
\indent From the uniqueness of the operator ${\large \tau }_{m,G\left(
x\right) }$, we conclude that ${\large \tau }_{G\left( x\right) }={\large %
\tau }_{n,x}\circ J_{G}$.
\end{proof}

\bigskip

\bigskip

Let $n,m\in 
%TCIMACRO{\U{2115} }%
%BeginExpansion
\mathbb{N}
%EndExpansion
$ and, for each $1\leq j\leq m$, take $l_{j}:%
%TCIMACRO{\U{211d} }%
%BeginExpansion
\mathbb{R}
%EndExpansion
^{n}\rightarrow 
%TCIMACRO{\U{211d} }%
%BeginExpansion
\mathbb{R}
%EndExpansion
$ to be a linear operator. Then $L:%
%TCIMACRO{\U{211d} }%
%BeginExpansion
\mathbb{R}
%EndExpansion
^{n}\rightarrow 
%TCIMACRO{\U{211d} }%
%BeginExpansion
\mathbb{R}
%EndExpansion
^{m}$ and $U:%
%TCIMACRO{\U{211d} }%
%BeginExpansion
\mathbb{R}
%EndExpansion
^{2n}\rightarrow 
%TCIMACRO{\U{211d} }%
%BeginExpansion
\mathbb{R}
%EndExpansion
^{n}$ given by%
\begin{equation*}
L\left( t_{1},...,t_{n}\right) :=\left( l_{1}\left( t_{1},...,t_{n}\right)
,...,l_{m}\left( t_{1},...,t_{n}\right) \right) ,\forall \left(
t_{1},...,t_{n}\right) \in 
%TCIMACRO{\U{211d} }%
%BeginExpansion
\mathbb{R}
%EndExpansion
^{n}
\end{equation*}%
and%
\begin{equation*}
U\left( t_{1},...,t_{2n}\right) =\left( t_{1}\vee t_{n+1},...,t_{n}\vee
t_{2n}\right) ,\forall \left( t_{1},...,t_{2n}\right) \in 
%TCIMACRO{\U{211d} }%
%BeginExpansion
\mathbb{R}
%EndExpansion
^{2n}
\end{equation*}%
belong to $\mathcal{H}_{n}^{m}$ and $\mathcal{H}_{2n}^{n},$ respectively.
Thus, by the above lemma 
\begin{equation*}
{\large \tau }_{m,L\left( x\right) }={\large \tau }_{n,x}\circ J_{L},\forall
x\in X^{n}
\end{equation*}%
and%
\begin{equation*}
{\large \tau }_{2n,U\left( x\right) }={\large \tau }_{n,x}\circ
J_{U},\forall x\in X^{n}.
\end{equation*}

\bigskip

The next result can be consulted in \cite[Theorem 1.d.1]{Linden}. For the
convenience of the reader we give a proof.

\begin{proposition}
Let $X$ be a Banach lattice, $n\in 
%TCIMACRO{\U{2115} }%
%BeginExpansion
\mathbb{N}
%EndExpansion
$ and $x=(x_{1},...,x_{n})\in X^{n}.$ Then, for each $h\in \mathcal{H}_{n},%
\label{propkrivi}$%
\begin{equation}
\tau _{n,x}\left( h\right) \leq \left\Vert h\right\Vert _{\mathcal{H}%
_{n}}\max_{1\leq j\leq n}\left\{ \left\vert x_{j}\right\vert \right\} \in X.
\label{nuevaprop}
\end{equation}%
\indent Therefore%
\begin{equation}
\left\Vert \tau _{n,x}\left( h\right) \right\Vert _{X}\leq \left\Vert
h\right\Vert _{\mathcal{H}_{n}}\left\Vert \max_{1\leq j\leq n}\left\{
\left\vert x_{j}\right\vert \right\} \right\Vert _{X}.  \label{NUEVAPROP2}
\end{equation}
\end{proposition}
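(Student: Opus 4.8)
The plan is to reduce the statement to two ingredients: the monotonicity of $\tau_{n,x}$ and the evaluation of $\tau_{n,x}$ on the supremum norm. First I would note that $\tau_{n,x}$ is order preserving in the monotone sense: if $h_{1}\leq h_{2}$ in $\mathcal{H}_{n}$ then $h_{1}\vee h_{2}=h_{2}$, so property $ii)$ of Theorem \ref{krivin} gives $\tau_{n,x}(h_{2})=\tau_{n,x}(h_{1})\vee \tau_{n,x}(h_{2})\geq \tau_{n,x}(h_{1})$. Hence applying $\tau_{n,x}$ preserves pointwise inequalities between elements of $\mathcal{H}_{n}$, and this is the mechanism by which a scalar estimate on $\mathbb{R}^{n}$ will be transported into $X$.

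Next I would evaluate $\tau_{n,x}$ on $\left\Vert \cdot\right\Vert_{\mathbb{R}^{n},\infty}$. One observes the pointwise identity $\left\Vert \cdot\right\Vert_{\mathbb{R}^{n},\infty}=\bigvee_{j=1}^{n}\left(\pi_{n,j}\vee(-\pi_{n,j})\right)$ in $\mathcal{H}_{n}$. Using linearity together with property $ii)$, extended by induction to a finite supremum, and property $i)$, I obtain
\[
\tau_{n,x}\left(\left\Vert \cdot\right\Vert_{\mathbb{R}^{n},\infty}\right)=\bigvee_{j=1}^{n}\left(x_{j}\vee(-x_{j})\right)=\max_{1\leq j\leq n}\left\{\left\vert x_{j}\right\vert\right\}.
\]
The key pointwise estimate then comes from homogeneity: for $t\neq 0$, writing $s=t/\left\Vert t\right\Vert_{\mathbb{R}^{n},\infty}\in S^{n-1}$ gives $h(t)=\left\Vert t\right\Vert_{\mathbb{R}^{n},\infty}h(s)\leq \left\Vert h\right\Vert_{\mathcal{H}_{n}}\left\Vert t\right\Vert_{\mathbb{R}^{n},\infty}$, with both sides vanishing at $t=0$, so that $h\leq \left\Vert h\right\Vert_{\mathcal{H}_{n}}\left\Vert \cdot\right\Vert_{\mathbb{R}^{n},\infty}$ in $\mathcal{H}_{n}$. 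Applying the monotone, linear operator $\tau_{n,x}$ and substituting the computation above yields $\left(\ref{nuevaprop}\right)$.

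Finally, to obtain $\left(\ref{NUEVAPROP2}\right)$ I would apply $\left(\ref{nuevaprop}\right)$ to $-h$ as well. Since $-h\in \mathcal{H}_{n}$, $\left\Vert -h\right\Vert_{\mathcal{H}_{n}}=\left\Vert h\right\Vert_{\mathcal{H}_{n}}$ and $\tau_{n,x}(-h)=-\tau_{n,x}(h)$ by linearity, the two one-sided bounds combine into $\left\vert \tau_{n,x}(h)\right\vert\leq \left\Vert h\right\Vert_{\mathcal{H}_{n}}\max_{1\leq j\leq n}\left\{\left\vert x_{j}\right\vert\right\}$; taking the order-preserving norm $\left\Vert \cdot\right\Vert_{X}$ of both sides then gives $\left(\ref{NUEVAPROP2}\right)$. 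The argument is otherwise routine; the points demanding care are the inductive extension of the lattice-homomorphism property to a finite supremum and the verification of the pointwise identity for $\left\Vert \cdot\right\Vert_{\mathbb{R}^{n},\infty}$ before pushing it through $\tau_{n,x}$, since it is precisely this evaluation — rather than any finer property of $h$ — that produces the $\max_{1\leq j\leq n}\left\{\left\vert x_{j}\right\vert\right\}$ term on the right-hand side.
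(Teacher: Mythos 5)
Your proof is correct and follows essentially the same route as the paper's: both dominate $h$ pointwise by $\left\Vert h\right\Vert _{\mathcal{H}_{n}}\left\Vert \cdot \right\Vert _{\mathbb{R}^{n},\infty }=\left\Vert h\right\Vert _{\mathcal{H}_{n}}\max_{1\leq j\leq n}\left\{ \left\vert \pi _{n,j}\right\vert \right\} $ and transport this inequality into $X$ via the monotonicity of $\tau _{n,x}$. The only cosmetic difference is in the final step, where the paper uses $\left\vert h\right\vert \leq l$ and the chain $\tau _{n,x}\left( h\right) \leq \tau _{n,x}\left( \left\vert h\right\vert \right) \leq \tau _{n,x}\left( l\right) $, while you obtain the two-sided bound by applying the estimate to $-h$; both correctly yield $\left\vert \tau _{n,x}\left( h\right) \right\vert \leq \left\Vert h\right\Vert _{\mathcal{H}_{n}}\max_{1\leq j\leq n}\left\{ \left\vert x_{j}\right\vert \right\} $ before taking the lattice norm.
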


\begin{proof}
Fix $h\in \mathcal{H}_{n}.$ Let us define the function $l:%
%TCIMACRO{\U{211d} }%
%BeginExpansion
\mathbb{R}
%EndExpansion
^{n}\rightarrow 
%TCIMACRO{\U{211d} }%
%BeginExpansion
\mathbb{R}
%EndExpansion
$ by%
\begin{equation*}
l:=\left\Vert h\right\Vert _{\mathcal{H}_{n}}\max_{1\leq j\leq n}\left\{
\left\vert \pi _{n,j}\right\vert \right\} .
\end{equation*}%
That is, for evey $a=\left( a_{1},...,a_{n}\right) \in 
%TCIMACRO{\U{211d} }%
%BeginExpansion
\mathbb{R}
%EndExpansion
^{n},$%
\begin{equation*}
l\left( a\right) =\left\Vert a\right\Vert _{%
%TCIMACRO{\U{211d} }%
%BeginExpansion
\mathbb{R}
%EndExpansion
^{n},\infty }\left\Vert h\right\Vert _{\mathcal{H}_{n}}.
\end{equation*}%
\indent Observe that $l\in \mathcal{H}_{n},$ $h\left( 0\right) =0=l\left(
0\right) $ and, for each $a=\left( a_{1},...,a_{n}\right) \in 
%TCIMACRO{\U{211d} }%
%BeginExpansion
\mathbb{R}
%EndExpansion
^{n}\backslash \left\{ 0\right\} ,$%
\begin{eqnarray*}
\left\vert h\left( a\right) \right\vert &=&\left\vert \left\Vert
a\right\Vert _{%
%TCIMACRO{\U{211d} }%
%BeginExpansion
\mathbb{R}
%EndExpansion
^{n},\infty }h\left( \frac{a_{1}}{\left\Vert a\right\Vert _{%
%TCIMACRO{\U{211d} }%
%BeginExpansion
\mathbb{R}
%EndExpansion
^{n},\infty }},...,\frac{a_{n}}{\left\Vert a\right\Vert _{%
%TCIMACRO{\U{211d} }%
%BeginExpansion
\mathbb{R}
%EndExpansion
^{n},\infty }}\right) \right\vert \\
&& \\
&=&\left\Vert a\right\Vert _{%
%TCIMACRO{\U{211d} }%
%BeginExpansion
\mathbb{R}
%EndExpansion
^{n},\infty }\left\vert h\left( \frac{a_{1}}{\left\Vert a\right\Vert _{%
%TCIMACRO{\U{211d} }%
%BeginExpansion
\mathbb{R}
%EndExpansion
^{n},\infty }},...,\frac{a_{n}}{\left\Vert a\right\Vert _{%
%TCIMACRO{\U{211d} }%
%BeginExpansion
\mathbb{R}
%EndExpansion
^{n},\infty }}\right) \right\vert \\
&& \\
&\leq &\left\Vert a\right\Vert _{%
%TCIMACRO{\U{211d} }%
%BeginExpansion
\mathbb{R}
%EndExpansion
^{n},\infty }\left\Vert h\right\Vert _{\mathcal{H}_{n}}=l\left( a\right) .
\end{eqnarray*}%
\indent Therefore $\left\vert h\right\vert \leq l$ and consequently%
\begin{eqnarray}
\tau _{n,x}\left( h\right) &\leq &\tau _{n,x}\left( \left\vert h\right\vert
\right) \leq \tau _{n,x}\left( l\right) =\tau _{n,x}\left( \left\Vert
h\right\Vert _{\mathcal{H}_{n}}\max_{1\leq j\leq n}\left\{ \left\vert \pi
_{n,j}\right\vert \right\} \right)  \label{xxx} \\
&&  \notag \\
&=&\left\Vert h\right\Vert _{\mathcal{H}_{n}}\max_{1\leq j\leq n}\left\{
\left\vert x_{j}\right\vert \right\} .  \notag
\end{eqnarray}%
\indent In this way inequality $\left( \ref{nuevaprop}\right) $ is
satisfied. Since $\left\Vert \cdot \right\Vert _{X}$ is a lattice norm, from
inequality $\left( \ref{xxx}\right) ,$ we obtain $\left( \ref{NUEVAPROP2}%
\right) $.
\end{proof}

\begin{lemma}
Let $X$, $W$ be Banach lattices and $T:X\rightarrow W$ an order-preserving
linear operator. Then, for each $n\in 
%TCIMACRO{\U{2115} }%
%BeginExpansion
\mathbb{N}
%EndExpansion
$ and $x_{1},...,x_{n}\in X,\label{lema1802}$%
\begin{equation}
T\circ \tau _{\left( x_{1,}...,x_{n}\right) }=\tau _{\left(
Tx_{1},...,Tx_{n}\right) }  \label{fr1}
\end{equation}
\end{lemma}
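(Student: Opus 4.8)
The plan is to deduce $(\ref{fr1})$ from the uniqueness clause of Krivine's functional calculus (Theorem \ref{krivin}). Both $T\circ\tau_{(x_1,\dots,x_n)}$ and $\tau_{(Tx_1,\dots,Tx_n)}$ are maps from $\mathcal{H}_n$ into $W$, so it suffices to show that the left-hand side is an order-preserving linear operator sending each projection $\pi_{n,j}$ to $Tx_j$. Since $\tau_{(Tx_1,\dots,Tx_n)}$ is, by Theorem \ref{krivin}, the unique operator with exactly these two properties, the desired identity will follow at once.

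First I would note that $T\circ\tau_{(x_1,\dots,x_n)}$ is linear, being a composition of the linear maps $\tau_{(x_1,\dots,x_n)}$ and $T$. Next I would verify that it is order-preserving in the sense of part $ii)$ of Theorem \ref{krivin}, i.e. that it commutes with $\vee$. Here I use that $\tau_{(x_1,\dots,x_n)}$ commutes with $\vee$ by part $ii)$ of Theorem \ref{krivin}, and that $T$, as an order-preserving (lattice-homomorphism) operator, satisfies $T(u\vee v)=Tu\vee Tv$ for all $u,v\in X$. Chaining these for $h_1,h_2\in\mathcal{H}_n$ gives
\begin{equation*}
T\big(\tau_{(x_1,\dots,x_n)}(h_1\vee h_2)\big)=T\big(\tau_{(x_1,\dots,x_n)}(h_1)\vee\tau_{(x_1,\dots,x_n)}(h_2)\big)=T\tau_{(x_1,\dots,x_n)}(h_1)\vee T\tau_{(x_1,\dots,x_n)}(h_2),
\end{equation*}
which is precisely the order-preserving property for the composition.

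Then I would evaluate the composition on the projections: by part $i)$ of Theorem \ref{krivin} we have $\tau_{(x_1,\dots,x_n)}(\pi_{n,j})=x_j$, whence $T\circ\tau_{(x_1,\dots,x_n)}(\pi_{n,j})=Tx_j$ for every $1\le j\le n$. As $Tx_j$ is exactly the $j$th entry of the tuple $(Tx_1,\dots,Tx_n)$, the operator $T\circ\tau_{(x_1,\dots,x_n)}$ satisfies both conditions characterizing $\tau_{(Tx_1,\dots,Tx_n)}$, and uniqueness yields $(\ref{fr1})$.

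The only real subtlety is the reading of ``order-preserving'': the argument genuinely needs $T$ to be a lattice homomorphism (so that it preserves $\vee$), not merely a positive operator. Indeed, for a positive $T$ that is not a homomorphism the identity already breaks down when tested against $h=\left\Vert \cdot \right\Vert_{\mathbb{R}^n,\infty}$, so this hypothesis cannot be weakened. Granting the homomorphism reading, there is no computational obstacle whatsoever; the entire proof is the uniqueness argument, and all the work lies in confirming that the two defining properties of $\tau_{(Tx_1,\dots,Tx_n)}$ transfer to the composition $T\circ\tau_{(x_1,\dots,x_n)}$.
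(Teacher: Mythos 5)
Your proof is correct and takes essentially the same approach as the paper: both verify that $T\circ \tau _{\left( x_{1},...,x_{n}\right) }$ is an order-preserving (i.e., $\vee$-preserving) linear operator sending each $\pi _{n,j}$ to $Tx_{j}$, and then invoke the uniqueness clause of Krivine's functional calculus (Theorem \ref{krivin}) to conclude it equals $\tau _{\left( Tx_{1},...,Tx_{n}\right) }$. Your remark that ``order-preserving'' must here be read as lattice homomorphism (preserving $\vee$), not merely positivity, is a correct reading of the paper's convention and accurately identifies the hypothesis the argument needs.
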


\begin{proof}
Note that $T\circ \tau _{\left( x_{1,}...,x_{n}\right) }$ is an
order-preserving linear operator such that 
\begin{equation*}
\tau _{\left( Tx_{1},...,Tx_{n}\right) }\left( \pi _{n,j}\right)
=Tx_{j}=T\circ \tau _{\left( x_{1,}...,x_{n}\right) }\left( \pi
_{n,j}\right) ,1\leq j\leq n.
\end{equation*}%
\indent Thus, from the uniqueness of the operator $\tau _{\left(
x_{1,}...,x_{n}\right) },$ it follows $\left( \ref{fr1}\right) .$
\end{proof}

\bigskip

Observe that, by above lemma, given $h\in \mathcal{H}_{n}$ and $\lambda \geq
0,\ $%
\begin{equation}
\tau _{\left( x_{1,}...,x_{n}\right) }\left( \lambda h\right) =\lambda \tau
_{\left( x_{1,}...,x_{n}\right) }\left( h\right)  \label{fr3}
\end{equation}

\section{Krivine's functional calculus for $\left\Vert \cdot \right\Vert _{%
%TCIMACRO{\U{211d} }%
%BeginExpansion
\mathbb{R}
%EndExpansion
^{n},Y}$}

From now on we fix $Y$ to be a Banach sequence lattice and $X,W$ to be
Banach lattices. Notice that, for each $n\in 
%TCIMACRO{\U{2115} }%
%BeginExpansion
\mathbb{N}
%EndExpansion
,$ $\left\Vert \cdot \right\Vert _{%
%TCIMACRO{\U{211d} }%
%BeginExpansion
\mathbb{R}
%EndExpansion
^{n},Y}\in \mathcal{H}_{n}.$ Then, for each $x=\left( x_{1},...,x_{n}\right)
\in X^{n}$ we define $\left\Vert \cdot \right\Vert _{Y}:X^{n}\rightarrow X$
by%
\begin{equation*}
\left\Vert \left( x_{1},...,x_{n}\right) \right\Vert _{Y}:=\tau _{n,x}\left(
\left\Vert \cdot \right\Vert _{%
%TCIMACRO{\U{211d} }%
%BeginExpansion
\mathbb{R}
%EndExpansion
^{n},Y}\right) .
\end{equation*}%
In this section we analyze some properties of the above operator and use it
to construct an order-preserving norm on $X^{n}$.

\bigskip

\begin{lemma}
Let $X$, $W$ be Banach lattices\smallskip .\label{lemalema}\newline
\indent$i)$ If $n\in 
%TCIMACRO{\U{2115} }%
%BeginExpansion
\mathbb{N}
%EndExpansion
,$ $\lambda \in 
%TCIMACRO{\U{211d} }%
%BeginExpansion
\mathbb{R}
%EndExpansion
$ and $x=\left( x_{1},...,x_{n}\right) \in X^{n},$ then%
\begin{equation*}
\left\Vert \left( \lambda x_{1},...,\lambda x_{n}\right) \right\Vert
_{Y}=\left\vert \lambda \right\vert \left\Vert \left( x_{1},...,x_{n}\right)
\right\Vert _{Y}.
\end{equation*}%
\indent$ii)$ If $x=\left( x_{1},...,x_{n}\right) ,y=\left(
y_{1},...,y_{n}\right) \in X^{n}$ and $\left\vert x_{j}\right\vert \leq
\left\vert y_{j}\right\vert ,$ $1\leq j\leq n,$ then\label{propo2305}%
\begin{equation*}
\left\Vert \left( x_{1},...,x_{n}\right) \right\Vert _{Y}\leq \left\Vert
\left( y_{1},...,y_{n}\right) \right\Vert _{Y}\in X
\end{equation*}%
and as a consequence%
\begin{equation*}
\left\Vert \left( x_{1},...,x_{n}\right) \right\Vert _{Y}=\left\Vert \left(
\left\vert x_{1}\right\vert ,...,\left\vert x_{n}\right\vert \right)
\right\Vert _{Y}.
\end{equation*}%
\indent$iii)$ If $x=\left( x_{1},...,x_{n}\right) $ and $y=\left(
y_{1},...,y_{n}\right) \in X^{n}$, then\label{lema23052}%
\begin{equation*}
\left\Vert \left( x_{1}+y_{1},...,x_{n}+y_{n}\right) \right\Vert _{Y}\leq
\left\Vert \left( x_{1},...,x_{n}\right) \right\Vert _{Y}+\left\Vert \left(
y_{1},...,y_{n}\right) \right\Vert _{Y}\in X.
\end{equation*}%
\indent Therefore%
\begin{equation}
\left\Vert \left\Vert \left( x_{1}+y_{1},...,x_{n}+y_{n}\right) \right\Vert
_{Y}\right\Vert _{X}\leq \left\Vert \left\Vert \left( x_{1},...,x_{n}\right)
\right\Vert _{Y}\right\Vert _{X}+\left\Vert \left\Vert \left(
y_{1},...,y_{n}\right) \right\Vert _{Y}\right\Vert _{X}.  \label{835}
\end{equation}%
\indent$iv)$ If $x=\left( x_{1},...,x_{n}\right) \in X^{n},$ then\label%
{lema2.51vi} 
\begin{equation*}
\left\Vert \left( x_{1},...,x_{n}\right) \right\Vert _{Y}\leq \left\Vert
e_{1}+...+e_{n}\right\Vert _{%
%TCIMACRO{\U{211d} }%
%BeginExpansion
\mathbb{R}
%EndExpansion
^{n},Y}\max_{1\leq j\leq n}\left\{ \left\vert x_{j}\right\vert \right\} \in X
\end{equation*}%
and as a consequence%
\begin{equation*}
\left\Vert \left\Vert \left( x_{1},...,x_{n}\right) \right\Vert
_{Y}\right\Vert _{X}\leq \left\Vert e_{1}+...+e_{n}\right\Vert _{%
%TCIMACRO{\U{211d} }%
%BeginExpansion
\mathbb{R}
%EndExpansion
^{n},Y}\left\Vert \max_{1\leq j\leq n}\left\{ \left\vert x_{j}\right\vert
\right\} \right\Vert _{X}.
\end{equation*}%
\newline
\indent$v)$ Let $T:X\rightarrow W$ be an order-preserving linear operator.
Then%
\begin{equation*}
T\left( \left\Vert \left( x_{1},...,x_{n}\right) \right\Vert _{Y}\right)
=\left\Vert \left( Tx_{1},...,Tx_{n}\right) \right\Vert _{Y},n\in 
%TCIMACRO{\U{2115} }%
%BeginExpansion
\mathbb{N}
%EndExpansion
,x_{1},...,x_{n}\in X
\end{equation*}
\end{lemma}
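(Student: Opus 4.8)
The plan is to dispatch parts (i), (iii), (iv) and (v) quickly from the machinery already in place, and to reserve the real work for (ii). One observation I will use repeatedly is that each $\tau_{n,x}$ is in fact a \emph{positive} operator: being linear with $\tau_{n,x}(h\vee 0)=\tau_{n,x}(h)\vee 0$, it sends $h\ge 0$ to $\tau_{n,x}(h)\ge 0$, so $f\le g$ in $\mathcal{H}_n$ forces $\tau_{n,x}(f)\le\tau_{n,x}(g)$.

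For (i) I would apply Lemma \ref{lematmm} to the linear map $L(t)=\lambda t\in\mathcal{H}_n^n$, for which $L(x)=(\lambda x_1,\dots,\lambda x_n)$; then $\|(\lambda x)\|_Y=\tau_{n,x}(\|\cdot\|_{\mathbb{R}^n,Y}\circ L)$, and homogeneity of the scalar norm gives $\|\cdot\|_{\mathbb{R}^n,Y}\circ L=|\lambda|\,\|\cdot\|_{\mathbb{R}^n,Y}$, so linearity of $\tau_{n,x}$ (cf. (\ref{fr3})) finishes it. For (iii) I set $z=(x_1,\dots,x_n,y_1,\dots,y_n)\in X^{2n}$ and use the first/second block projections and the sum map, together with Lemma \ref{lematmm}, to realize $\|(x)\|_Y$, $\|(y)\|_Y$ and $\|(x+y)\|_Y$ as $\tau_{2n,z}$ applied to $F(s,t)=\|s\|_{\mathbb{R}^n,Y}$, $G(s,t)=\|t\|_{\mathbb{R}^n,Y}$ and $S(s,t)=\|s+t\|_{\mathbb{R}^n,Y}$; the scalar triangle inequality gives $S\le F+G$ pointwise on $\mathbb{R}^{2n}$, and positivity plus linearity of $\tau_{2n,z}$ deliver the $X$-valued inequality, after which the lattice norm $\|\cdot\|_X$ yields (\ref{835}). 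Part (iv) is exactly Proposition \ref{propkrivi} applied to $h=\|\cdot\|_{\mathbb{R}^n,Y}$, once I note $\|h\|_{\mathcal{H}_n}=\sup_{\|t\|_{\mathbb{R}^n,\infty}=1}\|t\|_{\mathbb{R}^n,Y}=\|e_1+\cdots+e_n\|_{\mathbb{R}^n,Y}$, the last step because the lattice norm on $\mathbb{R}^n$ is maximized over the $\ell^\infty$-sphere at $(1,\dots,1)$. Part (v) is immediate from Lemma \ref{lema1802}: evaluating $T\circ\tau_{(x_1,\dots,x_n)}=\tau_{(Tx_1,\dots,Tx_n)}$ at $\|\cdot\|_{\mathbb{R}^n,Y}$ gives $T(\|(x)\|_Y)=\|(Tx)\|_Y$.

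The substance is (ii), which I would split. First the absolute-value identity: with $A(u)=(|u_1|,\dots,|u_n|)\in\mathcal{H}_n^n$, the fact that $\tau_{n,x}$ preserves absolute values (since $|h|=h\vee(-h)$) gives $A(x)=(|x_1|,\dots,|x_n|)$ in $X^n$, so Lemma \ref{lematmm} yields $\|(|x_1|,\dots,|x_n|)\|_Y=\tau_{n,x}(\|\cdot\|_{\mathbb{R}^n,Y}\circ A)$; as the scalar norm is a lattice norm, $\|\cdot\|_{\mathbb{R}^n,Y}\circ A=\|\cdot\|_{\mathbb{R}^n,Y}$, and the identity $\|(x)\|_Y=\|(|x|)\|_Y$ follows. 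This reduces the monotonicity to the case $0\le a_j\le b_j$.

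For that case I would set $z=(a_1,\dots,a_n,b_1,\dots,b_n)\in X^{2n}$ (all entries positive) and use the coordinatewise maximum map $U$ introduced above together with the absolute-value map $A$ on $\mathbb{R}^{2n}$. Since $a_j\vee b_j=b_j$, Lemma \ref{lematmm} gives $\tau_{2n,z}(\|\cdot\|_{\mathbb{R}^n,Y}\circ U)=\|(b)\|_Y$, while the first-block projection gives $\tau_{2n,z}(\|\cdot\|_{\mathbb{R}^n,Y}\circ P)=\|(a)\|_Y$. Here is the main obstacle: the order $a_j\le b_j$ lives in $X$, not on $\mathbb{R}^{2n}$, so the two functions cannot be compared pointwise as they stand. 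I get around this by exploiting positivity of the entries: since every coordinate of $z$ is $\ge 0$ one has $A(z)=z$, hence $\tau_{2n,z}=\tau_{2n,z}\circ J_A$, and precomposing both functions with $A$ turns the comparison into $\|(|s_1|,\dots,|s_n|)\|_{\mathbb{R}^n,Y}\le\|(|s_1|\vee|t_1|,\dots,|s_n|\vee|t_n|)\|_{\mathbb{R}^n,Y}$, which now holds pointwise on all of $\mathbb{R}^{2n}$ by monotonicity of the lattice norm. Positivity of $\tau_{2n,z}$ then gives $\|(a)\|_Y\le\|(b)\|_Y$, and combining with the absolute-value identity proves (ii) for general $|x_j|\le|y_j|$, the stated consequence being that identity itself. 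I expect the only delicate points to be the bookkeeping of which map in $\mathcal{H}_{2n}^n$ realizes which $X$-valued expression; the conceptual crux is the $A$-precomposition that converts an order relation valid only at $z$ into a globally valid inequality of homogeneous functions.
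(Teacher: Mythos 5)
Your proposal is correct, and for parts (i), (iii), (iv) and (v) it is in substance identical to the paper's proof: the same scaling map for (i), the same doubled vector $z\in X^{2n}$ with block projections and their sum for (iii), Proposition \ref{propkrivi} together with $\Vert\,\Vert\cdot\Vert_{\mathbb{R}^{n},Y}\Vert_{\mathcal{H}_{n}}=\Vert e_{1}+\dots+e_{n}\Vert_{\mathbb{R}^{n},Y}$ for (iv), and Lemma \ref{lema1802} for (v). The genuine divergence is in (ii). The paper uses a single comparison map $L_{2}(t_{1},\dots,t_{2n})=(\vert t_{1}\vert\vee\vert t_{n+1}\vert,\dots,\vert t_{n}\vert\vee\vert t_{2n}\vert)$ whose built-in absolute values make the pointwise inequality $\Vert\cdot\Vert_{\mathbb{R}^{n},Y}\circ L_{1}\leq\Vert\cdot\Vert_{\mathbb{R}^{n},Y}\circ L_{2}$ valid on all of $\mathbb{R}^{2n}$, with no reduction to positive entries; order-preservation of $\tau_{z}$ then yields $\Vert(x_{1},\dots,x_{n})\Vert_{Y}\leq\tau_{L_{2}(z)}(\Vert\cdot\Vert_{\mathbb{R}^{n},Y})=\Vert(\vert y_{1}\vert,\dots,\vert y_{n}\vert)\Vert_{Y}$, and the paper closes by identifying this with $\Vert(y_{1},\dots,y_{n})\Vert_{Y}$, i.e. by writing $\tau_{L_{2}(z)}=\tau_{y}$ at $\Vert\cdot\Vert_{\mathbb{R}^{n},Y}$. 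That last identification tacitly uses the absolute-value identity $\Vert(\vert y\vert)\Vert_{Y}=\Vert(y)\Vert_{Y}$, which the paper only records afterwards as a "consequence" of the monotonicity inequality — note that the chain as written proves $\Vert(x)\Vert_{Y}\leq\Vert(\vert y\vert)\Vert_{Y}$, and the direction $\Vert(\vert y\vert)\Vert_{Y}\leq\Vert(y)\Vert_{Y}$ needed to finish is not a formal consequence of that. Your route inverts the logical order: you prove the absolute-value identity outright via $A(t)=(\vert t_{1}\vert,\dots,\vert t_{n}\vert)$ and the pointwise identity $\Vert\cdot\Vert_{\mathbb{R}^{n},Y}\circ A=\Vert\cdot\Vert_{\mathbb{R}^{n},Y}$, then reduce to $0\leq a_{j}\leq b_{j}$ and compare the plain max map $U$ with the block projection after precomposing with $A$, which is legitimate because $A(z)=z$ gives $\tau_{2n,z}=\tau_{2n,z}\circ J_{A}$ by Lemma \ref{lematmm}. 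This costs you the positivity reduction and the $J_{A}$ trick, but it buys a self-contained justification of exactly the step the paper leaves implicit; in effect your argument supplies the one-line fact that the paper's final identification requires. Your preliminary observation that each $\tau_{n,x}$ is a positive operator is likewise the unstated fact underlying every inequality in the paper's version, so making it explicit is a further small improvement.
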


\begin{proof}
$i)$ Let $M_{\lambda }:%
%TCIMACRO{\U{211d} }%
%BeginExpansion
\mathbb{R}
%EndExpansion
^{n}\rightarrow 
%TCIMACRO{\U{211d} }%
%BeginExpansion
\mathbb{R}
%EndExpansion
^{n}$ be given by $M_{\lambda }\left( t_{1},...,t_{n}\right) :=\left(
\lambda t_{1},...,\lambda t_{n}\right) .$ Clearly $\left\Vert \cdot
\right\Vert _{%
%TCIMACRO{\U{211d} }%
%BeginExpansion
\mathbb{R}
%EndExpansion
^{n},Y}\circ M_{\lambda }=\left\vert \lambda \right\vert \left\Vert \cdot
\right\Vert _{%
%TCIMACRO{\U{211d} }%
%BeginExpansion
\mathbb{R}
%EndExpansion
^{n},Y}.$ Then, by the linearity of $\tau _{_{n,x}},$ 
\begin{eqnarray*}
\left\Vert \left( \lambda x_{1},...,\lambda x_{n}\right) \right\Vert _{Y}
&=&\left\Vert \lambda \left( x_{1},...,x_{n}\right) \right\Vert _{Y}=\tau
_{_{n,x}}\left( \left\Vert \cdot \right\Vert _{%
%TCIMACRO{\U{211d} }%
%BeginExpansion
\mathbb{R}
%EndExpansion
^{n},Y}\circ M_{\lambda }\right) \\
&& \\
&=&\tau _{_{n,x}}\left( \left\vert \lambda \right\vert \left\Vert \cdot
\right\Vert _{%
%TCIMACRO{\U{211d} }%
%BeginExpansion
\mathbb{R}
%EndExpansion
^{n},Y}\right) =\left\vert \lambda \right\vert \tau _{_{n,x}}\left(
\left\Vert \cdot \right\Vert _{%
%TCIMACRO{\U{211d} }%
%BeginExpansion
\mathbb{R}
%EndExpansion
^{n},Y}\right) \\
&& \\
&=&\left\vert \lambda \right\vert \left\Vert \left( x_{1},...,x_{n}\right)
\right\Vert _{Y}.
\end{eqnarray*}%
\indent$ii)$ Let $z=\left( x_{1},...,x_{n},y_{1},...,y_{n}\right) \in X^{2n}$
and $L_{1},L_{2}:%
%TCIMACRO{\U{211d} }%
%BeginExpansion
\mathbb{R}
%EndExpansion
^{2n}\rightarrow 
%TCIMACRO{\U{211d} }%
%BeginExpansion
\mathbb{R}
%EndExpansion
^{n}$ be given by%
\begin{equation*}
L_{1}\left( t_{1},...,t_{2n}\right) =\left( t_{1},...,t_{n}\right) \text{
and }L_{2}\left( t_{1},...,t_{2n}\right) =\left( \left\vert t_{1}\right\vert
\vee \left\vert t_{n+1}\right\vert ,...,\left\vert t_{n}\right\vert \vee
\left\vert t_{2n}\right\vert \right) .
\end{equation*}%
\indent By Lemma \ref{lematmm},%
\begin{eqnarray*}
\left\Vert \left( x_{1},...,x_{n}\right) \right\Vert _{Y} &=&\tau
_{_{x}}\left( \left\Vert \cdot \right\Vert _{%
%TCIMACRO{\U{211d} }%
%BeginExpansion
\mathbb{R}
%EndExpansion
^{n},Y}\right) =\tau _{_{L_{1}\left( z\right) }}\left( \left\Vert \cdot
\right\Vert _{%
%TCIMACRO{\U{211d} }%
%BeginExpansion
\mathbb{R}
%EndExpansion
^{n},Y}\right) =\tau _{_{z}}\left( \left\Vert \cdot \right\Vert _{%
%TCIMACRO{\U{211d} }%
%BeginExpansion
\mathbb{R}
%EndExpansion
^{n},Y}\circ L_{1}\right) \\
&\leq &\tau _{_{z}}\left( \left\Vert \cdot \right\Vert _{%
%TCIMACRO{\U{211d} }%
%BeginExpansion
\mathbb{R}
%EndExpansion
^{n},Y}\circ L_{2}\right) =\tau _{_{L_{2}\left( z\right) }}\left( \left\Vert
\cdot \right\Vert _{%
%TCIMACRO{\U{211d} }%
%BeginExpansion
\mathbb{R}
%EndExpansion
^{n},Y}\right) =\tau _{_{y}}\left( \left\Vert \cdot \right\Vert _{%
%TCIMACRO{\U{211d} }%
%BeginExpansion
\mathbb{R}
%EndExpansion
^{n},Y}\right) \\
&=&\left\Vert \left( y_{1},...,y_{n}\right) \right\Vert _{Y}.
\end{eqnarray*}%
\indent In particular, by taking $y_{j}=\left\vert x_{j}\right\vert ,$%
\begin{equation*}
\left\Vert \left( x_{1},...,x_{n}\right) \right\Vert _{Y}=\left\Vert \left(
\left\vert x_{1}\right\vert ,...,\left\vert x_{n}\right\vert \right)
\right\Vert _{Y}.
\end{equation*}%
\newline
\newline
\indent$iii)$ Let $z$ and $L_{1}$ be as in $ii)$. Next we define $%
L_{3},L_{4}:%
%TCIMACRO{\U{211d} }%
%BeginExpansion
\mathbb{R}
%EndExpansion
^{2n}\rightarrow 
%TCIMACRO{\U{211d} }%
%BeginExpansion
\mathbb{R}
%EndExpansion
^{n}$ by%
\begin{equation*}
L_{3}\left( t_{1},...,t_{2n}\right) =\left( t_{n+1},...,t_{2n}\right) \text{
and }L_{4}=L_{1}+L_{3}.
\end{equation*}%
\indent Then, by Lemma \ref{lematmm} and $\left\Vert \cdot \right\Vert _{Y}$
triangle%
%TCIMACRO{\U{b4}}%
%BeginExpansion
\'{}%
%EndExpansion
s inequality,%
\begin{eqnarray*}
\left\Vert \left( x_{1}+y_{1},...,x_{n}+y_{n}\right) \right\Vert _{Y}
&=&\tau _{_{x+y}}\left( \left\Vert \cdot \right\Vert _{%
%TCIMACRO{\U{211d} }%
%BeginExpansion
\mathbb{R}
%EndExpansion
^{n},Y}\right) =\tau _{_{L_{4}\left( z\right) }}\left( \left\Vert \cdot
\right\Vert _{%
%TCIMACRO{\U{211d} }%
%BeginExpansion
\mathbb{R}
%EndExpansion
^{n},Y}\right) \\
&& \\
&=&\tau _{_{z}}\left( \left\Vert \cdot \right\Vert _{%
%TCIMACRO{\U{211d} }%
%BeginExpansion
\mathbb{R}
%EndExpansion
^{n},Y}\circ L_{4}\right) \\
&& \\
&\leq &\tau _{_{z}}\left( \left\Vert \cdot \right\Vert _{%
%TCIMACRO{\U{211d} }%
%BeginExpansion
\mathbb{R}
%EndExpansion
^{n},Y}\circ L_{1}+\left\Vert \cdot \right\Vert _{%
%TCIMACRO{\U{211d} }%
%BeginExpansion
\mathbb{R}
%EndExpansion
^{n},Y}\circ L_{3}\right) \\
&& \\
&=&\tau _{_{z}}\left( \left\Vert \cdot \right\Vert _{%
%TCIMACRO{\U{211d} }%
%BeginExpansion
\mathbb{R}
%EndExpansion
^{n},Y}\circ L_{1}\right) +\tau _{_{z}}\left( \left\Vert \cdot \right\Vert _{%
%TCIMACRO{\U{211d} }%
%BeginExpansion
\mathbb{R}
%EndExpansion
^{n},Y}\circ L_{3}\right) \\
&& \\
&=&\tau _{_{L_{1}\left( z\right) }}\left( \left\Vert \cdot \right\Vert _{%
%TCIMACRO{\U{211d} }%
%BeginExpansion
\mathbb{R}
%EndExpansion
^{n},Y}\right) +\tau _{_{L_{3}\left( z\right) }}\left( \left\Vert \cdot
\right\Vert _{%
%TCIMACRO{\U{211d} }%
%BeginExpansion
\mathbb{R}
%EndExpansion
^{n},Y}\right) \\
&& \\
&=&\tau _{_{x}}\left( \left\Vert \cdot \right\Vert _{%
%TCIMACRO{\U{211d} }%
%BeginExpansion
\mathbb{R}
%EndExpansion
^{n},Y}\right) +\tau _{_{y}}\left( \left\Vert \cdot \right\Vert _{%
%TCIMACRO{\U{211d} }%
%BeginExpansion
\mathbb{R}
%EndExpansion
^{n},Y}\right) \\
&& \\
&=&\left\Vert \left( x_{1},...,x_{n}\right) \right\Vert _{Y}+\left\Vert
\left( y_{1},...,y_{n}\right) \right\Vert _{Y}.
\end{eqnarray*}%
\indent Finally, from the triangle inequality for $\left\Vert \cdot
\right\Vert _{X},$ the inequality $\left( \ref{835}\right) $ is satisfied$.$%
\newline
\newline
\indent$iv)$ Observe that $\left( a_{1},...,a_{n}\right) \leq $ $%
e_{1}+...+e_{n}\in S^{n-1},$ for any $a\in S^{n-1}$. Then, by Proposition %
\ref{propkrivi}, it follows that, for every $x=\left( x_{1},...,x_{n}\right)
\in X^{n},$%
\begin{eqnarray*}
\left\Vert \left( x_{1},...,x_{n}\right) \right\Vert _{Y} &=&\tau
_{n,x}\left( \left\Vert \cdot \right\Vert _{%
%TCIMACRO{\U{211d} }%
%BeginExpansion
\mathbb{R}
%EndExpansion
^{n},Y}\right) \leq \left\Vert \left\Vert \cdot \right\Vert _{%
%TCIMACRO{\U{211d} }%
%BeginExpansion
\mathbb{R}
%EndExpansion
^{n},Y}\right\Vert _{\mathcal{H}_{n}}\max_{1\leq j\leq n}\left\{ \left\vert
x_{j}\right\vert \right\} \\
&& \\
&=&\sup \left\{ \left\vert \left\Vert a\right\Vert _{%
%TCIMACRO{\U{211d} }%
%BeginExpansion
\mathbb{R}
%EndExpansion
^{n},Y}\right\vert :a\in S^{n-1}\right\} \max_{1\leq j\leq n}\left\{
\left\vert x_{j}\right\vert \right\} \\
&& \\
&\leq &\left\Vert e_{1}+...+e_{n}\right\Vert _{%
%TCIMACRO{\U{211d} }%
%BeginExpansion
\mathbb{R}
%EndExpansion
^{n},Y}\max_{1\leq j\leq n}\left\{ \left\vert x_{j}\right\vert \right\}
\end{eqnarray*}%
and consequently%
\begin{equation*}
\left\Vert \left\Vert \left( x_{1},...,x_{n}\right) \right\Vert
_{Y}\right\Vert _{X}\leq \left\Vert e_{1}+...+e_{n}\right\Vert _{%
%TCIMACRO{\U{211d} }%
%BeginExpansion
\mathbb{R}
%EndExpansion
^{n},Y}\left\Vert \max_{1\leq j\leq n}\left\{ \left\vert x_{j}\right\vert
\right\} \right\Vert _{X}.
\end{equation*}%
\newline
\indent$v)$ It follows from Lemma \ref{lema1802} considering $h=\left\Vert
\cdot \right\Vert _{%
%TCIMACRO{\U{211d} }%
%BeginExpansion
\mathbb{R}
%EndExpansion
^{n},Y}$.\qquad
\end{proof}

\bigskip

Given a Banach space $E$ and $n\in 
%TCIMACRO{\U{2115} }%
%BeginExpansion
\mathbb{N}
%EndExpansion
$, we will write $\left\Vert \cdot \right\Vert _{E^{n},Y}$ to denote the
norm on $E^{n}$ induced naturally by $\left\Vert \cdot \right\Vert _{%
%TCIMACRO{\U{211d} }%
%BeginExpansion
\mathbb{R}
%EndExpansion
^{n},Y}$. That is,\newline
\begin{equation}
\left\Vert w\right\Vert _{E^{n},Y}:=\left\Vert \left( \left\Vert
w_{1}\right\Vert _{E},...,\left\Vert w_{n}\right\Vert _{E}\right)
\right\Vert _{%
%TCIMACRO{\U{211d} }%
%BeginExpansion
\mathbb{R}
%EndExpansion
^{n},Y},\forall w=\left( w_{1},...,w_{n}\right) \in E^{n}.
\label{norma_X^n_Y}
\end{equation}

Note that the norms $\left\Vert \cdot \right\Vert _{E^{n},1}$ and $%
\left\Vert \cdot \right\Vert _{E^{n},Y}$ are equivalent since they are
defined by the equivalent norms $\left\Vert \cdot \right\Vert _{%
%TCIMACRO{\U{211d} }%
%BeginExpansion
\mathbb{R}
%EndExpansion
^{n},1}$ and $\left\Vert \cdot \right\Vert _{%
%TCIMACRO{\U{211d} }%
%BeginExpansion
\mathbb{R}
%EndExpansion
^{n},Y}$ respectively. Then, as $\left( E^{n},\left\Vert \cdot \right\Vert
_{E^{n},1}\right) $ is complete it follows that $\left( E^{n},\left\Vert
\cdot \right\Vert _{E^{n},Y}\right) $ is a Banach space. Now, instead of a
Banach space $E$ consider a Banach lattice $X.$ Then $X^{n}$ is a Banach
lattice with the order by components. Furthermore, for every $x=\left(
x_{1},...,x_{n}\right) $, $z=\left( z_{1},...,z_{n}\right) \in X^{n}$ such
that $x\leq z,$%
\begin{equation*}
\left\Vert x\right\Vert _{X^{n},Y}=\left\Vert \left( \left\Vert
x_{1}\right\Vert _{X},...,\left\Vert x_{n}\right\Vert _{X}\right)
\right\Vert _{Y}\leq \left\Vert \left( \left\Vert z_{1}\right\Vert
_{X},...,\left\Vert z_{n}\right\Vert _{X}\right) \right\Vert _{Y}=\left\Vert
z\right\Vert _{X^{n},Y}.
\end{equation*}%
Hence, in this case $\left\Vert \cdot \right\Vert _{X^{n},Y}$ is an
order-preserving norm and consequently\linebreak\ $\left( X^{n},\left\Vert
\cdot \right\Vert _{X^{n},Y}\right) $ is a Banach lattice.

\bigskip

Next, making use of Lemma \ref{lemalema}, we define another norm on the
lattice $X^{n}$. This will be useful to prove the completeness of the spaces
of $Y$-convex and $Y$-concave operators, which we will consider later. The
relation of this norm\ with $\left\Vert \cdot \right\Vert _{X^{n},Y}$ will
be analyzed in the next section.\newline

\begin{definition}
For each $n\in 
%TCIMACRO{\U{2115} }%
%BeginExpansion
\mathbb{N}
%EndExpansion
$ we define the function $\left\Vert \cdot \right\Vert _{X^{n},Y,\tau
}:X^{n}\rightarrow 
%TCIMACRO{\U{211d} }%
%BeginExpansion
\mathbb{R}
%EndExpansion
$ by%
\begin{equation*}
\left\Vert x\right\Vert _{X^{n},Y,\tau }:=\left\Vert \left\Vert \left(
x_{1},...,x_{n}\right) \right\Vert _{Y}\right\Vert _{X},\forall x=\left(
x_{1},...,x_{n}\right) \in X^{n}.
\end{equation*}
\end{definition}

Observe that, by Lemma \ref{lemalema}, $\left\Vert \cdot \right\Vert
_{X^{n},Y,\tau }$ is a lattice norm.

\begin{proposition}
The norms $\left\Vert \cdot \right\Vert _{X^{n},1}$ and $\left\Vert \cdot
\right\Vert _{X^{n},Y,\tau }$ are equivalent and consequently $\left(
X^{n},\left\Vert \cdot \right\Vert _{X^{n},Y,\tau }\right) $ is a Banach
lattice.\label{propoca}
\end{proposition}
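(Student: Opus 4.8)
The plan is to exhibit two positive constants (depending on $n$ and $Y$) that sandwich $\left\Vert \cdot\right\Vert_{X^{n},Y,\tau}$ between multiples of $\left\Vert \cdot\right\Vert_{X^{n},1}$. Once the two norms are equivalent, completeness of $\left(X^{n},\left\Vert \cdot\right\Vert_{X^{n},1}\right)$---which holds because this is the Banach lattice $\left(X^{n},\left\Vert \cdot\right\Vert_{X^{n},Y}\right)$ already established for the admissible choice $Y=\ell^{1}$---transfers to $\left\Vert \cdot\right\Vert_{X^{n},Y,\tau}$. Since the latter is already a lattice norm (as noted right after its definition), I conclude that $\left(X^{n},\left\Vert \cdot\right\Vert_{X^{n},Y,\tau}\right)$ is a Banach lattice.

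For the upper bound I would invoke Lemma \ref{lemalema}$\,iv)$, which gives $\left\Vert x\right\Vert_{X^{n},Y,\tau}=\left\Vert \left\Vert \left(x_{1},...,x_{n}\right)\right\Vert_{Y}\right\Vert_{X}\leq \left\Vert e_{1}+\cdots+e_{n}\right\Vert_{\mathbb{R}^{n},Y}\left\Vert \max_{1\leq j\leq n}\left\{\left\vert x_{j}\right\vert\right\}\right\Vert_{X}$. Since $\max_{1\leq j\leq n}\left\vert x_{j}\right\vert\leq \sum_{j=1}^{n}\left\vert x_{j}\right\vert$ in $X$ and $\left\Vert \cdot\right\Vert_{X}$ is a lattice norm, the right-hand side is at most $\left\Vert e_{1}+\cdots+e_{n}\right\Vert_{\mathbb{R}^{n},Y}\sum_{j=1}^{n}\left\Vert x_{j}\right\Vert_{X}=\left\Vert e_{1}+\cdots+e_{n}\right\Vert_{\mathbb{R}^{n},Y}\left\Vert x\right\Vert_{X^{n},1}$. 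This yields $\left\Vert x\right\Vert_{X^{n},Y,\tau}\leq C\left\Vert x\right\Vert_{X^{n},1}$ with $C=\left\Vert e_{1}+\cdots+e_{n}\right\Vert_{\mathbb{R}^{n},Y}$.

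The lower bound is the crux, and is where the Banach \emph{sequence} lattice hypothesis enters. Here I would first record the scalar inequality $\left\vert a_{j}\right\vert\,\left\Vert e_{j}\right\Vert_{\mathbb{R}^{n},Y}=\left\Vert a_{j}e_{j}\right\Vert_{\mathbb{R}^{n},Y}\leq \left\Vert a\right\Vert_{\mathbb{R}^{n},Y}$ for $a=\left(a_{1},...,a_{n}\right)\in\mathbb{R}^{n}$, valid because $\left\vert a_{j}e_{j}\right\vert\leq\left\vert a\right\vert$ coordinatewise and $Y$ is an ideal with an order-preserving norm (note $\left\Vert e_{j}\right\Vert_{\mathbb{R}^{n},Y}>0$ since $e_{j}\in Y$). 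In $\mathcal{H}_{n}$ this reads $\left\vert \pi_{n,j}\right\vert\leq \left\Vert e_{j}\right\Vert_{\mathbb{R}^{n},Y}^{-1}\left\Vert \cdot\right\Vert_{\mathbb{R}^{n},Y}$ pointwise. Applying $\tau_{n,x}$, which is order-preserving by Theorem \ref{krivin} (which also gives $\tau_{n,x}\left(\left\vert \pi_{n,j}\right\vert\right)=\tau_{n,x}\left(\pi_{n,j}\vee(-\pi_{n,j})\right)=x_{j}\vee(-x_{j})=\left\vert x_{j}\right\vert$) and positively homogeneous by (\ref{fr3}), I obtain $\left\vert x_{j}\right\vert\leq \left\Vert e_{j}\right\Vert_{\mathbb{R}^{n},Y}^{-1}\left\Vert \left(x_{1},...,x_{n}\right)\right\Vert_{Y}$ in $X$. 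Taking $\left\Vert \cdot\right\Vert_{X}$ and summing over $j$ gives $\left\Vert x\right\Vert_{X^{n},1}=\sum_{j}\left\Vert x_{j}\right\Vert_{X}\leq \big(\sum_{j=1}^{n}\left\Vert e_{j}\right\Vert_{\mathbb{R}^{n},Y}^{-1}\big)\left\Vert x\right\Vert_{X^{n},Y,\tau}$, that is $c\left\Vert x\right\Vert_{X^{n},1}\leq\left\Vert x\right\Vert_{X^{n},Y,\tau}$ with $c=\big(\sum_{j=1}^{n}\left\Vert e_{j}\right\Vert_{\mathbb{R}^{n},Y}^{-1}\big)^{-1}$.

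I expect the only delicate point to be justifying the passage from the pointwise inequality between elements of $\mathcal{H}_{n}$ to the corresponding order inequality in $X$: this requires that $\tau_{n,x}$ respects the pointwise order, which follows from its lattice-homomorphism property (using $h_{1}\leq h_{2}\iff h_{1}\vee h_{2}=h_{2}$), together with the identity $\tau_{n,x}\left(\left\vert \pi_{n,j}\right\vert\right)=\left\vert x_{j}\right\vert$. Everything else is routine bookkeeping with the two explicit constants $C$ and $c$.
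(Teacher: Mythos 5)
Your proof is correct, and it follows the paper's overall strategy: a two-sided comparison of $\left\Vert \cdot \right\Vert _{X^{n},Y,\tau }$ with $\left\Vert \cdot \right\Vert _{X^{n},1}$, with an upper bound identical to the paper's (Lemma \ref{lemalema} $iv)$ followed by $\max_{j}\left\vert x_{j}\right\vert \leq \sum_{j}\left\vert x_{j}\right\vert $). The difference is in how the lower bound is obtained. The paper reaches the coordinate-wise estimate $\left\Vert e_{j}\right\Vert _{Y}\left\Vert x_{j}\right\Vert _{X}\leq \left\Vert x\right\Vert _{X^{n},Y,\tau }$ by applying the composition Lemma \ref{lematmm} to the coordinate embeddings $g_{j}\left( t_{1},...,t_{n}\right) =\left( 0,...,t_{j},...,0\right) $, so that $\tau _{n,g_{j}\left( x\right) }=\tau _{n,x}\circ J_{g_{j}}$ and $\left\Vert \cdot \right\Vert _{\mathbb{R}^{n},Y}\circ g_{j}=\left\Vert e_{j}\right\Vert _{Y}\left\vert \pi _{n,j}\right\vert $, combined with the lattice property of $\left\Vert \cdot \right\Vert _{X^{n},Y,\tau }$. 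You reach the same estimate without invoking Lemma \ref{lematmm} at all: the ideal property of $Y$ gives the pointwise inequality $\left\vert \pi _{n,j}\right\vert \leq \left\Vert e_{j}\right\Vert _{Y}^{-1}\left\Vert \cdot \right\Vert _{\mathbb{R}^{n},Y}$ in $\mathcal{H}_{n}$, which you transport to $X$ using only the defining properties of $\tau _{n,x}$ from Theorem \ref{krivin} — order-preservation and the identity $\tau _{n,x}\left( \left\vert \pi _{n,j}\right\vert \right) =\left\vert x_{j}\right\vert $, both of which you correctly deduce from the lattice-homomorphism property, plus homogeneity (\ref{fr3}). This is more elementary, and incidentally avoids the paper's small imprecision of writing $\left\Vert \cdot \right\Vert _{\mathbb{R}^{n},Y}\circ g_{j}$ as $\left\Vert e_{j}\right\Vert _{Y}\pi _{n,j}$ rather than $\left\Vert e_{j}\right\Vert _{Y}\left\vert \pi _{n,j}\right\vert $. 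The aggregation also differs: the paper averages the $n$ coordinate bounds to obtain the constant $\min_{j}\left\Vert e_{j}\right\Vert _{Y}/n$, whereas you sum reciprocals to obtain $\left( \sum_{j=1}^{n}\left\Vert e_{j}\right\Vert _{Y}^{-1}\right) ^{-1}$, a slightly sharper constant; either suffices. Your completeness step (transferring completeness from $\left\Vert \cdot \right\Vert _{X^{n},1}$, viewed as the case $Y=\ell ^{1}$, and quoting the lattice-norm observation made after the definition of $\left\Vert \cdot \right\Vert _{X^{n},Y,\tau }$) matches the paper's conclusion.
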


\begin{proof}
Fix $n\in 
%TCIMACRO{\U{2115} }%
%BeginExpansion
\mathbb{N}
%EndExpansion
$ and let $x=\left( x_{1},...,x_{n}\right) \in X^{n}.$ By $iv)$ in Lemma \ref%
{lema23052},%
\begin{eqnarray}
\left\Vert x\right\Vert _{X^{n},Y,\tau } &\leq &\left\Vert
e_{1}+...+e_{n}\right\Vert _{Y}\left\Vert \max_{1\leq j\leq n}\left\{
\left\vert x_{j}\right\vert \right\} \right\Vert _{X}  \label{21} \\
&&  \notag \\
&\leq &\left\Vert e_{1}+...+e_{n}\right\Vert _{Y}\left\Vert
\sum_{j=1}^{n}\left\vert x_{j}\right\vert \right\Vert _{X}  \notag \\
&&  \notag \\
&\leq &\left\Vert e_{1}+...+e_{n}\right\Vert _{Y}\sum_{j=1}^{n}\left\Vert
x_{j}\right\Vert _{X}=\left\Vert e_{1}+...+e_{n}\right\Vert _{Y}\left\Vert
x\right\Vert _{X^{n},1}.  \notag
\end{eqnarray}%
\indent On the other hand, fix $1\leq j\leq n$ and define $g_{j}:%
%TCIMACRO{\U{211d} }%
%BeginExpansion
\mathbb{R}
%EndExpansion
^{n}\rightarrow 
%TCIMACRO{\U{211d} }%
%BeginExpansion
\mathbb{R}
%EndExpansion
^{n}$ by $g_{j}\left( t_{1},...,t_{n}\right) =\left(
0,...,t_{j},0,...0\right) .$ As $g_{j}\in \mathcal{H}_{n}^{n}$ and%
\begin{equation*}
\left\Vert \cdot \right\Vert _{%
%TCIMACRO{\U{211d} }%
%BeginExpansion
\mathbb{R}
%EndExpansion
^{n},Y}\circ g_{j}\left( t_{1},...,t_{n}\right) =\left\Vert \left(
0,...,t_{j},0,...0\right) \right\Vert _{%
%TCIMACRO{\U{211d} }%
%BeginExpansion
\mathbb{R}
%EndExpansion
^{n},Y}=\left\vert t_{j}\right\vert \left\Vert e_{j}\right\Vert _{Y},\forall
\left( t_{1},...,t_{n}\right) \in 
%TCIMACRO{\U{211d} }%
%BeginExpansion
\mathbb{R}
%EndExpansion
^{n}
\end{equation*}%
we have $\left\Vert \cdot \right\Vert _{%
%TCIMACRO{\U{211d} }%
%BeginExpansion
\mathbb{R}
%EndExpansion
^{n},Y}\circ g_{j}=\left\Vert e_{j}\right\Vert _{Y}\pi _{n,j}\in H_{n}$.
Then, by Lemma \ref{lematmm} and equality \ref{fr3},%
\begin{eqnarray}
\left\Vert x\right\Vert _{X^{n},Y,\tau } &\geq &\left\Vert \left(
0,...,x_{j},...,0\right) \right\Vert _{X^{n},Y,\tau }=\left\Vert \tau
_{n,g_{j}\left( x\right) }\left( \left\Vert \cdot \right\Vert _{%
%TCIMACRO{\U{211d} }%
%BeginExpansion
\mathbb{R}
%EndExpansion
^{n},Y}\right) \right\Vert _{X}  \label{2301} \\
&&  \notag \\
&=&\left\Vert \tau _{n,x}\left( \left\Vert \cdot \right\Vert _{%
%TCIMACRO{\U{211d} }%
%BeginExpansion
\mathbb{R}
%EndExpansion
^{n},Y}\circ g_{j}\right) \right\Vert _{X}=\left\Vert \tau _{n,x}\left(
\left\Vert e_{j}\right\Vert _{Y}\pi _{n,j}\right) \right\Vert _{X}  \notag \\
&&  \notag \\
&=&\left\Vert e_{j}\right\Vert _{Y}\left\Vert \tau _{n,x}\left( \pi
_{n,j}\right) \right\Vert _{X}=\left\Vert e_{j}\right\Vert _{Y}\left\Vert
x_{j}\right\Vert _{X}  \notag
\end{eqnarray}%
and consequently 
\begin{eqnarray}
\left\Vert x\right\Vert _{X^{n},Y,\tau } &\geq &\frac{1}{n}%
\sum_{j=1}^{n}\left\Vert e_{j}\right\Vert _{Y}\left\Vert x_{j}\right\Vert
_{X}\geq \frac{\min \left\{ \left\Vert e_{j}\right\Vert _{Y}\right\} }{n}%
\sum_{j=1}^{n}\left\Vert x_{j}\right\Vert _{X}  \label{22} \\
&=&\frac{\min \left\{ \left\Vert e_{j}\right\Vert _{Y}\right\} }{n}%
\left\Vert x\right\Vert _{X^{n},1}.  \notag
\end{eqnarray}%
\indent From inequalities $\left( \ref{21}\right) $ and $\left( \ref{22}%
\right) $ it follows that $\left\Vert \cdot \right\Vert _{X^{n},1}$ and $%
\left\Vert \cdot \right\Vert _{X^{n},Y,\tau }$ are equivalent norms. Thus $%
\left( X^{n},\left\Vert \cdot \right\Vert _{X^{n},Y,\tau }\right) $ is a
Banach lattice.
\end{proof}

\bigskip

Observe that, when $n=1,$ inequalities $\left( \ref{21}\right) $ and $\left( %
\ref{22}\right) $ imply%
\begin{equation}
\left\Vert x\right\Vert _{X^{1},Y,\tau }=\left\Vert e_{1}\right\Vert
_{Y}\left\Vert x\right\Vert _{X},\forall x\in X.  \label{igualdad812}
\end{equation}%
Also, since $\left\Vert \cdot \right\Vert _{X^{n},1}$ and $\left\Vert \cdot
\right\Vert _{X^{n},Y}$ are equivalent norms, it follows by the above
proposition that $\left\Vert \cdot \right\Vert _{X^{n},Y,\tau }$ and $%
\left\Vert \cdot \right\Vert _{X^{n},Y}$ are equivalent norms.

\bigskip

The following properties are straightforward implications of Lemma \ref%
{lemalema} and the above proposition.

\bigskip

Let $X$ be a Banach lattice and $\left\{ x_{m}\right\} _{m=1}^{\infty
}\subset X^{n}.$ Then$\smallskip $\newline
\indent$i)$ The sequence $\left\{ x_{m}\right\} _{m=1}^{\infty }$ is a
Cauchy sequence in $\left( X^{n},\left\Vert \cdot \right\Vert
_{X^{n},Y}\right) $ if and only if it is a Cauchy sequence in $X$ in each
one of its components.$\smallskip $\newline
\indent$ii)$ The sequence $\left\{ x_{m}\right\} _{m=1}^{\infty }$ is
convergent in $\left( X^{n},\left\Vert \cdot \right\Vert _{X^{n},Y}\right) $
if and only if it is convergent in $X$ in each one of its components. In
this case for $x_{m}:=\left( x_{m,1},...,x_{m,n}\right) ,m\in 
%TCIMACRO{\U{2115} }%
%BeginExpansion
\mathbb{N}
%EndExpansion
$ we have that%
\begin{equation*}
\lim_{m\rightarrow \infty }x_{m}=\left( \lim_{m\rightarrow \infty
}x_{m,1},...,\lim_{m\rightarrow \infty }x_{m,n}\right)
\end{equation*}%
and%
\begin{equation*}
\left\Vert \lim_{m\rightarrow \infty }x_{m}\right\Vert _{X^{n},Y}=\left\Vert
\left( \lim_{m\rightarrow \infty }\left\Vert x_{m,1}\right\Vert
_{X},...,\lim_{m\rightarrow \infty }\left\Vert x_{m,n}\right\Vert
_{X}\right) \right\Vert _{Y}.
\end{equation*}%
\indent$iii)$ For each $n\in 
%TCIMACRO{\U{2115} }%
%BeginExpansion
\mathbb{N}
%EndExpansion
,$ 
\begin{equation*}
\left\Vert \left( x_{1},...,x_{n},0\right) \right\Vert
_{X^{n+1},Y}=\left\Vert \left( x_{1},...,x_{n}\right) \right\Vert _{X^{n},Y},
\end{equation*}

\bigskip

\begin{equation*}
\left\Vert \left( x_{1},...,x_{n},0\right) \right\Vert _{X^{n+1},Y,\tau
}=\left\Vert \left( x_{1},...,x_{n}\right) \right\Vert _{X^{n},Y,\tau },
\end{equation*}

\bigskip

\begin{equation*}
\left\Vert \left( x_{1},...,x_{n}\right) \right\Vert _{X^{n},Y}\leq
\left\Vert \left( x_{1},...,x_{n},x_{n+1}\right) \right\Vert _{X^{n+1},Y},
\end{equation*}%
and\newline

\begin{equation*}
\left\Vert \left( x_{1},...,x_{n}\right) \right\Vert _{X^{n},Y,\tau }\leq
\left\Vert \left( x_{1},...,x_{n},x_{n+1}\right) \right\Vert
_{X^{n+1},Y,\tau }.
\end{equation*}

\bigskip

Next we will present a characterization of $\tau _{n,x}\left( \left\Vert
\cdot \right\Vert _{%
%TCIMACRO{\U{211d} }%
%BeginExpansion
\mathbb{R}
%EndExpansion
^{n},Y}\right) .$ In order to do this, we will identify the dual space $%
\left( 
%TCIMACRO{\U{211d} }%
%BeginExpansion
\mathbb{R}
%EndExpansion
^{n},\left\Vert \cdot \right\Vert _{Y}\right) ^{\ast }$ with $\left( 
%TCIMACRO{\U{211d} }%
%BeginExpansion
\mathbb{R}
%EndExpansion
^{n},\left\Vert \cdot \right\Vert _{Y^{\ast }}\right) .$ That is, we will
identify each functional $\varphi \in \left( 
%TCIMACRO{\U{211d} }%
%BeginExpansion
\mathbb{R}
%EndExpansion
^{n},\left\Vert \cdot \right\Vert _{Y}\right) ^{\ast }$ with the unique
vector $\left( a_{1},...,a_{n}\right) \in 
%TCIMACRO{\U{211d} }%
%BeginExpansion
\mathbb{R}
%EndExpansion
^{n}$ such that%
\begin{equation*}
\left\langle \left( t_{1},...,t_{n}\right) ,\varphi \right\rangle
=\sum_{j=1}^{n}a_{j}t_{j},\forall \left( t_{1},...,t_{n}\right) \in 
%TCIMACRO{\U{211d} }%
%BeginExpansion
\mathbb{R}
%EndExpansion
^{n},
\end{equation*}%
and%
\begin{equation*}
\left\Vert \left( a_{1},...,a_{n}\right) \right\Vert _{Y^{\ast }}:=\sup
\left\{ \left\vert \sum_{j=1}^{n}a_{j}t_{j}\right\vert :\left\Vert \left(
t_{1},...,t_{n}\right) \right\Vert _{%
%TCIMACRO{\U{211d} }%
%BeginExpansion
\mathbb{R}
%EndExpansion
^{n},Y}\leq 1\right\} .
\end{equation*}%
\newline
\indent Thus, for each $t_{1},...,t_{n}\in 
%TCIMACRO{\U{211d} }%
%BeginExpansion
\mathbb{R}
%EndExpansion
^{n}$ we have that%
\begin{eqnarray}
\left\Vert \left( t_{1},...,t_{n}\right) \right\Vert _{Y} &=&\sup \left\{
\left\vert \left\langle \left( t_{1},...,t_{n}\right) ,\varphi \right\rangle
\right\vert :\left\Vert \varphi \right\Vert _{\left( 
%TCIMACRO{\U{211d} }%
%BeginExpansion
\mathbb{R}
%EndExpansion
^{n},\left\Vert \cdot \right\Vert _{Y}\right) ^{\ast }}\leq 1\right\}  \notag
\\
&=&\sup \left\{ \left\vert \sum_{j=1}^{n}a_{j}t_{j}\right\vert :\left\Vert
\left( a_{1},...,a_{n}\right) \right\Vert _{%
%TCIMACRO{\U{211d} }%
%BeginExpansion
\mathbb{R}
%EndExpansion
^{n},Y^{\ast }}\leq 1\right\} .  \label{7nov}
\end{eqnarray}

\begin{lemma}
Let $K$ be a compact space, $\mathcal{C}\subset C\left( K\right) $ a non
empty family and $g\in C\left( K\right) .$ If $g=\sup \mathcal{C}$, then,
there exists a sequence $\left\{ g_{n}\right\} _{n=1}^{\infty }\subset
C\left( K\right) $ such that 
\begin{equation}
\left\Vert f-g_{n}\right\Vert _{C\left( K\right) }\rightarrow 0  \label{jui}
\end{equation}
and each $g_{n}$ is a maximum of functions on \label{2555}$\mathcal{C}.$
\end{lemma}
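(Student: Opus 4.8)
The plan is to construct each $g_{n}$ as a finite pointwise maximum of members of $\mathcal{C}$ that approximates $g$ uniformly to within $1/n$, using the compactness of $K$ to upgrade a purely pointwise approximation into a uniform one. The approximation target is $\left\Vert g-g_{n}\right\Vert _{C(K)}\rightarrow 0$ (the $f$ in $\left( \ref{jui}\right) $ being $g$).

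First I would record the elementary order observation: since $g=\sup \mathcal{C}$ is the pointwise supremum, every $c\in \mathcal{C}$ satisfies $c\leq g$, and hence any finite maximum $\max \left\{ c_{1},\dots ,c_{k}\right\} $ with $c_{i}\in \mathcal{C}$ also satisfies $\max \left\{ c_{1},\dots ,c_{k}\right\} \leq g$. This furnishes the upper estimate automatically, so that the whole problem reduces to controlling the approximation from below. Next, I would fix $\varepsilon >0$ and, for each $x\in K$, use $g(x)=\sup _{c\in \mathcal{C}}c(x)$ to select $c_{x}\in \mathcal{C}$ with $c_{x}(x)>g(x)-\varepsilon $. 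Because $c_{x}-g$ lies in $C(K)$ and takes a value greater than $-\varepsilon $ at $x$, continuity provides an open neighborhood $U_{x}$ of $x$ on which $c_{x}(y)>g(y)-\varepsilon $ for all $y\in U_{x}$.

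Then compactness enters: the family $\left\{ U_{x}\right\} _{x\in K}$ is an open cover of $K$, so finitely many $U_{x_{1}},\dots ,U_{x_{k}}$ already cover $K$. Setting $g_{\varepsilon }:=\max \left\{ c_{x_{1}},\dots ,c_{x_{k}}\right\} $, a finite maximum of members of $\mathcal{C}$, every $y\in K$ belongs to some $U_{x_{i}}$, whence $g_{\varepsilon }(y)\geq c_{x_{i}}(y)>g(y)-\varepsilon $. Combined with $g_{\varepsilon }\leq g$ from the first step, this gives $0\leq g(y)-g_{\varepsilon }(y)<\varepsilon $ for every $y\in K$, so $\left\Vert g-g_{\varepsilon }\right\Vert _{C(K)}\leq \varepsilon $. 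Finally, choosing $\varepsilon =1/n$ produces the desired sequence $g_{n}:=g_{1/n}$, each a maximum of functions in $\mathcal{C}$, with $\left\Vert g-g_{n}\right\Vert _{C(K)}\leq 1/n\rightarrow 0$.

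The one step that genuinely requires care is the passage from the local, pointwise estimate $c_{x}(y)>g(y)-\varepsilon $ valid only near each $x$, to a single function beating $g-\varepsilon $ \emph{everywhere} on $K$; this is exactly where compactness is used, via the extraction of a finite subcover. Everything else is routine continuity and order bookkeeping, and no monotonicity hypothesis on $\mathcal{C}$ is needed (so this is a covering argument rather than a Dini-type one).
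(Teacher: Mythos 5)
Your proof is correct and takes essentially the same approach as the paper's: select, for each point of $K$, a member of $\mathcal{C}$ that is within $\varepsilon$ of $g$ at that point, pass to a neighborhood by continuity, extract a finite subcover by compactness, and take the finite maximum. The only differences are cosmetic: you explicitly record the upper bound $g_{\varepsilon }\leq g$ (which the paper leaves implicit) and you correctly read the $f$ in $\left( \ref{jui}\right) $ as $g$, a typo in the statement.
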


\begin{proof}
Fix $n\in 
%TCIMACRO{\U{2115} }%
%BeginExpansion
\mathbb{N}
%EndExpansion
.$ For each $y\in K$ let $f_{y}\in \mathcal{C}$ be such that $g\left(
y\right) -f_{y}\left( y\right) <\frac{1}{n}.$ Consider an open neighbourhood
of $y,$ $V_{y}$, satisfying that $x\in V_{y}$ implies $g\left( x\right)
-f_{y}\left( x\right) <\frac{1}{n}.$ Since $\left\{ V_{y}:y\in K\right\} $
is an open cover of $K$ there exist $y_{1},...,y_{m}\in K$ such that $%
K\subset \cup _{j=1}^{m}V_{y_{j}}.$ Let us define $g_{n}=\max \left\{
f_{y_{j}}:1\leq j\leq m\right\} .$ Since $g\left( x\right) -g_{n}\left(
x\right) <\frac{1}{n},x\in K$ it follows that $\left\{ g_{n}\right\}
_{n=1}^{\infty }$ satisfies $\left( \ref{jui}\right) $.
\end{proof}

\bigskip

The next result extends that of the case $Y=\ell ^{p},1\leq p\leq \infty $ 
\cite[p. 42]{Linden}$.$

\begin{proposition}
For each $n\in 
%TCIMACRO{\U{2115} }%
%BeginExpansion
\mathbb{N}
%EndExpansion
$ and $x=\left( x_{1},...,x_{n}\right) \in X^{n},\label{lema69}\qquad $%
\begin{equation}
\left\Vert \left( x_{1},...,x_{n}\right) \right\Vert _{Y}=\sup \left\{
\sum_{j=1}^{n}a_{j}x_{j}:\left\Vert \left( a_{1},...,a_{n}\right)
\right\Vert _{Y^{\ast }}\leq 1\right\} .  \label{muero3}
\end{equation}
\end{proposition}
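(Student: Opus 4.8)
The plan is to rewrite the scalar norm as a pointwise supremum of linear functionals, apply $\tau_{n,x}$, and then pass the supremum through $\tau_{n,x}$ using its continuity together with a uniform–approximation argument. For each $a=(a_{1},\dots,a_{n})$ with $\left\Vert a\right\Vert _{Y^{\ast }}\leq 1$, let $l_{a}\in \mathcal{H}_{n}$ be the linear functional $l_{a}(t)=\sum_{j=1}^{n}a_{j}t_{j}$ (linear, hence homogeneous of degree $1$); by property $i)$ of Theorem \ref{krivin} and linearity of $\tau_{n,x}$ we have $\tau_{n,x}(l_{a})=\sum_{j=1}^{n}a_{j}x_{j}$. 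Starting from $\left( \ref{7nov}\right)$ and using that the ball $\{a:\left\Vert a\right\Vert_{Y^{\ast}}\leq 1\}$ is symmetric (so the absolute value may be dropped), I would record the identity in $\mathcal{H}_{n}$
\begin{equation*}
\left\Vert \cdot \right\Vert _{\mathbb{R}^{n},Y}=\sup \left\{ l_{a}:\left\Vert a\right\Vert _{Y^{\ast }}\leq 1\right\},
\end{equation*}
where the supremum is taken pointwise. Writing $h:=\left\Vert \cdot \right\Vert _{\mathbb{R}^{n},Y}$ and $\mathcal{C}:=\{l_{a}:\left\Vert a\right\Vert _{Y^{\ast }}\leq 1\}$, the goal $\left( \ref{muero3}\right)$ becomes $\tau_{n,x}(h)=\sup\{\tau_{n,x}(l_{a}):l_{a}\in \mathcal{C}\}$ in $X$.

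One inclusion is immediate. Since $h\geq l_{a}$ pointwise and $\tau_{n,x}$ is order-preserving (property $ii)$ of Theorem \ref{krivin} gives monotonicity, because $h_{1}\leq h_{2}$ forces $h_{1}\vee h_{2}=h_{2}$), we get $\tau_{n,x}(h)\geq \tau_{n,x}(l_{a})=\sum_{j}a_{j}x_{j}$ for every admissible $a$; thus $\tau_{n,x}(h)$ is an upper bound of the set on the right of $\left( \ref{muero3}\right)$.

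The hard part will be proving that $\tau_{n,x}(h)$ is the \emph{least} upper bound, since Theorem \ref{krivin} only preserves \emph{finite} suprema while $\mathcal{C}$ is infinite. This is where Lemma \ref{2555} enters. Restricting to the compact sphere $S^{n-1}$ (recall $\left\Vert h\right\Vert _{\mathcal{H}_{n}}=\sup_{S^{n-1}}\left\vert h\right\vert$, so convergence in $\mathcal{H}_{n}$ is exactly uniform convergence on $S^{n-1}$), the displayed identity reads $h|_{S^{n-1}}=\sup\{l_{a}|_{S^{n-1}}:l_{a}\in \mathcal{C}\}$ in $C(S^{n-1})$. Lemma \ref{2555} then yields functions $G_{k}\in \mathcal{H}_{n}$, each a \emph{finite} maximum $G_{k}=\bigvee_{i=1}^{m_{k}}l_{a^{(k,i)}}$ of members of $\mathcal{C}$, with $\left\Vert G_{k}-h\right\Vert _{\mathcal{H}_{n}}\rightarrow 0$. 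By inequality $\left( \ref{NUEVAPROP2}\right)$ the map $\tau_{n,x}$ is bounded, hence continuous, so $\tau_{n,x}(G_{k})\rightarrow \tau_{n,x}(h)$ in $X$; and iterating the finite-supremum property gives $\tau_{n,x}(G_{k})=\bigvee_{i=1}^{m_{k}}\sum_{j}a^{(k,i)}_{j}x_{j}$.

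To conclude, let $v\in X$ be any upper bound of the right-hand set of $\left( \ref{muero3}\right)$. Then $v$ dominates each term $\sum_{j}a^{(k,i)}_{j}x_{j}$ and hence their finite supremum, so $\tau_{n,x}(G_{k})\leq v$ for every $k$. Since the positive cone $X^{+}$ is norm-closed and $\tau_{n,x}(G_{k})\rightarrow \tau_{n,x}(h)$, passing to the limit yields $\tau_{n,x}(h)\leq v$. Therefore $\tau_{n,x}(h)$ is the least upper bound, which is precisely the equality $\left( \ref{muero3}\right)$. The only delicate points are the symmetry step that removes the absolute value in $\left( \ref{7nov}\right)$, the identification of uniform convergence on $S^{n-1}$ with convergence in $\left\Vert \cdot \right\Vert _{\mathcal{H}_{n}}$, and the use of norm-closedness of $X^{+}$ to turn a norm limit into an order inequality.
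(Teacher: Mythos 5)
Your proof is correct and follows essentially the same route as the paper's: the upper bound via monotonicity of $\tau_{n,x}$ and $\left( \ref{7nov}\right)$, then Lemma \ref{2555} on the compact sphere $S^{n-1}$ to approximate $\left\Vert \cdot \right\Vert _{\mathbb{R}^{n},Y}$ uniformly by finite maxima of the linear functionals $l_{a}$, and finally continuity of $\tau_{n,x}$ to pass the bound to the limit. In fact you make explicit several points the paper leaves tacit (the symmetry of $B_{Y^{\ast}}$ that removes the absolute value, the identification of $\mathcal{H}_{n}$-convergence with uniform convergence on $S^{n-1}$, and the norm-closedness of $X^{+}$), so no changes are needed.
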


\begin{proof}
Let us fix $n\in 
%TCIMACRO{\U{2115} }%
%BeginExpansion
\mathbb{N}
%EndExpansion
$ and $x=\left( x_{1},...,x_{n}\right) \in X^{n}.$ Given the
order-preserving property of $\tau _{n,x},$ it follows from $\left( \ref%
{7nov}\right) $ that 
\begin{equation}
\left\Vert \left( x_{1},...,x_{n}\right) \right\Vert _{Y}\geq
\sum_{j=1}^{n}a_{j}x_{j},\forall \left( a_{1},...,a_{n}\right) \in
B_{Y^{\ast }}.  \label{wer}
\end{equation}%
\indent Consider the restriction of $\left\Vert \cdot \right\Vert _{%
%TCIMACRO{\U{211d} }%
%BeginExpansion
\mathbb{R}
%EndExpansion
^{n},Y}$ to $S^{n-1},$ and let us define the collection%
\begin{equation}
\mathcal{C}=\left\{ f_{a}:S^{n-1}\rightarrow 
%TCIMACRO{\U{211d} }%
%BeginExpansion
\mathbb{R}
%EndExpansion
\text{ s.t.. }f_{a}\left( t_{1},...,t_{n}\right) =\sum_{j=1}^{n}a_{j}t_{j},%
\text{ where }a=\left( a_{1},...,a_{n}\right) \in B_{Y^{\ast }}.\right\} .
\label{dfdfdf}
\end{equation}%
\newline
\indent Since $S^{n-1}$ is compact, taking $\mathcal{C}=C\left(
S^{n-1}\right) $ and $\left\Vert \cdot \right\Vert _{%
%TCIMACRO{\U{211d} }%
%BeginExpansion
\mathbb{R}
%EndExpansion
^{n},Y}\in C\left( S^{n-1}\right) ,$ from the above lemma and equality $%
\left( \ref{7nov}\right) ,$ it follows that there exists a sequence $\left\{
g_{m}\right\} _{m=1}^{\infty }\subset C\left( S^{n-1}\right) $ such that $%
\left\Vert \left\Vert \cdot \right\Vert _{%
%TCIMACRO{\U{211d} }%
%BeginExpansion
\mathbb{R}
%EndExpansion
^{n},Y}-g_{m}\right\Vert _{\infty }\rightarrow 0$ and $g_{m}$ is a maximum
of functions on $\mathcal{C}.$ Now consider $z\in X$ such that%
\begin{equation*}
z\geq \sum_{j=1}^{n}a_{j}x_{j},\forall \left( a_{1},...,a_{n}\right) \in
B_{Y^{\ast }}.
\end{equation*}%
\indent For each $m\in 
%TCIMACRO{\U{2115} }%
%BeginExpansion
\mathbb{N}
%EndExpansion
,$ $z\geq \tau _{x}\left( g_{m}\right) $ and $\tau _{x}\left( g_{m}\right)
\rightarrow \tau _{x}\left( \left\Vert \cdot \right\Vert _{%
%TCIMACRO{\U{211d} }%
%BeginExpansion
\mathbb{R}
%EndExpansion
^{n},Y}\newline
\right) =\left\Vert \left( x_{1},...,x_{n}\right) \right\Vert _{Y}.$ Then $%
z\geq \left\Vert \left( x_{1},...,x_{n}\right) \right\Vert _{Y}$ and
consequently $\left( \ref{muero3}\right) $ is satisfied.
\end{proof}

\begin{corollary}
For each $n\in 
%TCIMACRO{\U{2115} }%
%BeginExpansion
\mathbb{N}
%EndExpansion
$ and $x=\left( x_{1},...,x_{n}\right) \in X^{n},\label{coro1405}$%
\begin{equation}
\left\Vert x\right\Vert _{X^{n},Y,\tau }\leq \sup_{k\in 
%TCIMACRO{\U{2115} }%
%BeginExpansion
\mathbb{N}
%EndExpansion
}\left\{ \left\Vert
\bigvee\limits_{j=1}^{k}\sum_{i=1}^{n}a_{i,j}x_{i}\right\Vert
_{X}:\left\Vert \left( a_{1,j},...,a_{n,j}\right) \right\Vert _{Y^{\ast
}}\leq 1,1\leq j\leq k\right\} .  \label{wsde}
\end{equation}%
$\qquad $
\end{corollary}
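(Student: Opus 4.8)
The plan is to reduce everything to the supremum formula for $\left\Vert \left( x_{1},\ldots ,x_{n}\right) \right\Vert _{Y}$ obtained in Proposition \ref{lema69}, and then to exploit that the supremum there is in fact a limit of \emph{finite} lattice suprema. Recall that, by definition, $\left\Vert x\right\Vert _{X^{n},Y,\tau }=\left\Vert \tau _{n,x}\left( \left\Vert \cdot \right\Vert _{\mathbb{R}^{n},Y}\right) \right\Vert _{X}$, so it suffices to approximate the element $\tau _{n,x}\left( \left\Vert \cdot \right\Vert _{\mathbb{R}^{n},Y}\right) \in X$ by elements of exactly the form appearing inside the right-hand side supremum of $\left( \ref{wsde}\right) $.

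First I would invoke Lemma \ref{2555}, as in the proof of Proposition \ref{lema69}: viewing $\left\Vert \cdot \right\Vert _{\mathbb{R}^{n},Y}$ as an element of $C\left( S^{n-1}\right) $ (identifying $\mathcal{H}_{n}$ with $C\left( S^{n-1}\right) $ via restriction to the sup-norm sphere and homogeneous extension) and taking $\mathcal{C}$ to be the family $\left( \ref{dfdfdf}\right) $ of functionals $f_{a}$ with $a\in B_{Y^{\ast }}$, equality $\left( \ref{7nov}\right) $ gives $\left\Vert \cdot \right\Vert _{\mathbb{R}^{n},Y}=\sup \mathcal{C}$ on $S^{n-1}$ (the absolute value in $\left( \ref{7nov}\right) $ is harmless since $B_{Y^{\ast }}$ is symmetric). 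Hence there is a sequence $\left\{ g_{m}\right\} $ in $C\left( S^{n-1}\right) $ with $\left\Vert \left\Vert \cdot \right\Vert _{\mathbb{R}^{n},Y}-g_{m}\right\Vert _{\infty }\rightarrow 0$, each $g_{m}$ being a maximum of finitely many members of $\mathcal{C}$, say $g_{m}=\bigvee_{j=1}^{k_{m}}f_{a^{(j)}}$ with $a^{(j)}=\left( a_{1,j},\ldots ,a_{n,j}\right) \in B_{Y^{\ast }}$.

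Next I would push these approximants through $\tau _{n,x}$. Since each $f_{a^{(j)}}=\sum_{i=1}^{n}a_{i,j}\pi _{n,i}$, the linearity of $\tau _{n,x}$ together with $\tau _{n,x}\left( \pi _{n,i}\right) =x_{i}$ gives $\tau _{n,x}\left( f_{a^{(j)}}\right) =\sum_{i=1}^{n}a_{i,j}x_{i}$, while the order-preserving property of Krivine's calculus distributes $\tau _{n,x}$ over the finite lattice maximum, so that $\tau _{n,x}\left( g_{m}\right) =\bigvee_{j=1}^{k_{m}}\sum_{i=1}^{n}a_{i,j}x_{i}$. In particular, for every $m$ the number $\left\Vert \tau _{n,x}\left( g_{m}\right) \right\Vert _{X}$ is one of the quantities over which the supremum on the right-hand side of $\left( \ref{wsde}\right) $ is taken (with $k=k_{m}$), whence $\left\Vert \tau _{n,x}\left( g_{m}\right) \right\Vert _{X}\leq \sup_{k}\{\cdots \}$ for every $m$.

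Finally I would pass to the limit. By Proposition \ref{propkrivi} the operator $\tau _{n,x}:\mathcal{H}_{n}\rightarrow X$ is bounded, hence continuous, so $\left\Vert \cdot \right\Vert _{\mathbb{R}^{n},Y}-g_{m}\rightarrow 0$ in $\mathcal{H}_{n}$ forces $\tau _{n,x}\left( g_{m}\right) \rightarrow \tau _{n,x}\left( \left\Vert \cdot \right\Vert _{\mathbb{R}^{n},Y}\right) $ in $X$; continuity of $\left\Vert \cdot \right\Vert _{X}$ then yields $\left\Vert \tau _{n,x}\left( g_{m}\right) \right\Vert _{X}\rightarrow \left\Vert x\right\Vert _{X^{n},Y,\tau }$. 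Combining this with the uniform bound of the previous step gives $\left\Vert x\right\Vert _{X^{n},Y,\tau }\leq \sup_{k}\{\cdots \}$, which is $\left( \ref{wsde}\right) $. The step I would single out as the main obstacle is the identity $\tau _{n,x}\left( g_{m}\right) =\bigvee_{j}\sum_{i}a_{i,j}x_{i}$: it rests on $\tau _{n,x}$ being simultaneously linear and a lattice homomorphism on $\mathcal{H}_{n}$, plus the identification $\mathcal{H}_{n}\cong C\left( S^{n-1}\right) $ needed to quote Lemma \ref{2555}. Because the approximants $g_{m}$ need not be monotone, one genuinely obtains only the inequality, through the limit, rather than an attained maximum.
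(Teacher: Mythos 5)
Your proposal is correct and follows essentially the same route as the paper: both invoke Lemma \ref{2555} with the family $\mathcal{C}$ of functionals $f_{a}$, $a\in B_{Y^{\ast }}$, from $\left( \ref{dfdfdf}\right) $, identify $\tau _{n,x}\left( g_{m}\right) =\bigvee_{j}\sum_{i}a_{i,j}x_{i}$ as an element of the set over which the right-hand supremum of $\left( \ref{wsde}\right) $ is taken, and pass to the limit using the continuity of $\tau _{n,x}$ guaranteed by Proposition \ref{propkrivi}. In fact you make explicit two steps the paper leaves implicit (the lattice-homomorphism computation of $\tau _{n,x}\left( g_{m}\right) $ and the symmetry of $B_{Y^{\ast }}$ needed to drop the absolute value in $\left( \ref{7nov}\right) $), so no further changes are needed.
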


\begin{proof}
Let us fix $n\in 
%TCIMACRO{\U{2115} }%
%BeginExpansion
\mathbb{N}
%EndExpansion
$ and $x=\left( x_{1},...,x_{n}\right) \in X^{n}.$ For each \thinspace $k\in 
%TCIMACRO{\U{2115} }%
%BeginExpansion
\mathbb{N}
%EndExpansion
$ let us define%
\begin{equation*}
A_{k}:=\left\{ \bigvee\limits_{j=1}^{k}\sum_{i=1}^{n}a_{i,j}x_{i}:\left\Vert
\left( a_{1,j},...,a_{n,j}\right) \right\Vert _{Y^{\ast }}\leq 1\right\}
\subset X
\end{equation*}%
and%
\begin{equation*}
A:=\bigcup\limits_{k=1}^{\infty }A_{k}\subset X.
\end{equation*}%
\indent By Lemma $\ref{2555}$ there exists a sequence $\left\{ g_{m}\right\}
_{m=1}^{\infty }\subset C\left( S^{n-1}\right) $ such that $\tau _{x}\left(
g_{m}\right) \subset A$ and $\tau _{x}\left( g_{m}\right) \rightarrow
\left\Vert \left( x_{1},...,x_{n}\right) \right\Vert _{Y}.$ Note that%
\begin{equation*}
\sup \left\{ \left\Vert w\right\Vert _{X}:w\in A\right\} \geq \left\Vert
\tau _{x}\left( g_{m}\right) \right\Vert _{X},\forall m\in 
%TCIMACRO{\U{2115}}%
%BeginExpansion
\mathbb{N}%
%EndExpansion
\end{equation*}%
and consequently%
\begin{equation*}
\sup \left\{ \left\Vert w\right\Vert _{X}:w\in A\right\} \geq
\lim_{m\rightarrow \infty }\left\Vert \tau _{x}\left( g_{m}\right)
\right\Vert _{X}=\left\Vert \left\Vert \left( x_{1},...,x_{n}\right)
\right\Vert _{Y}\right\Vert _{X}=\left\Vert x\right\Vert _{X^{n},Y,\tau }.
\end{equation*}
\end{proof}

\section{$Y$-convexity and $Y$-concavity on Banach lattices}

Let $E,F$ denote Banach spaces, $X,W,Z$ denote Banach lattices and $Y$ stand
for a Banach sequence lattice.

\begin{definition}
A linear operator $T:$ $E\rightarrow X$ is $Y$-convex if there exists a
constant $C>0$ satisfying 
\begin{equation}
\left\Vert \left( Tw_{1},...,Tw_{n}\right) \right\Vert _{X^{n},Y,\tau }\leq
C\left\Vert \left( w_{1},...,w_{n}\right) \right\Vert _{E^{n},Y},
\label{20073}
\end{equation}%
for each $n\in 
%TCIMACRO{\U{2115} }%
%BeginExpansion
\mathbb{N}
%EndExpansion
$ and $w_{1},...,w_{n}\in E.$ The smallest constant satisfying (\ref{20073})
for all such $n\in N$ and $w_{j}$'s ($j=1,...,n$) is called the $Y$%
-convexity constant of $T$ and is denoted by $M^{Y}\left( T\right) .$ Also,
we will write%
\begin{equation*}
\mathcal{K}^{Y}\left( E,X\right) =\left\{ T:E\rightarrow X\text{ s.t. }T%
\text{ is }Y\text{-convex}\right\} .
\end{equation*}%
\newline
\indent Similarly, a linear operator $S:X\rightarrow E$ is $Y$\textit{%
-concave\ }if there exists a constant $K>0$ satisfying 
\begin{equation}
\left\Vert \left( Sx_{1},...,Sx_{n}\right) \right\Vert _{E^{n},Y}\leq
K\left\Vert \left( x_{1},...,x_{n}\right) \right\Vert _{X^{n},Y,\tau },
\label{20074}
\end{equation}%
for each $n\in 
%TCIMACRO{\U{2115} }%
%BeginExpansion
\mathbb{N}
%EndExpansion
$ and $x_{1},...,x_{n}\in X.$ The smallest constant satisfying (\ref{20074})
for all such $n\in N$ and $x_{j}$'s ($j=1,...,n$) is called the $Y$%
-concavity constant of $S$ and is denoted by $M_{Y}\left( S\right) .$ Also,
we will write%
\begin{equation*}
\mathcal{K}_{Y}\left( X,E\right) =\left\{ S:X\rightarrow E\text{ s.t. }S%
\text{ is }Y\text{-concave}\right\} .
\end{equation*}
\end{definition}

\begin{lemma}
\label{propoYconvimpliescont}$i)$ $\mathcal{K}^{Y}\left( E,X\right) $ is a
vector subspace of $\mathcal{L}\left( E,X\right) $ and $M^{Y}$ defines a
norm on $\mathcal{K}^{Y}\left( E,X\right) $ such that 
\begin{equation*}
\left\Vert T\right\Vert \leq M^{Y}\left( T\right) ,\forall T\in \mathcal{K}%
^{Y}\left( E,X\right) .
\end{equation*}%
\indent$ii)$ $\mathcal{K}_{Y}\left( X,E\right) $ is a vector subspace of $%
\mathcal{L}\left( X,E\right) $ and $M_{Y}$ defines a norm on $\mathcal{K}%
_{Y}\left( X,E\right) $ such that%
\begin{equation*}
\left\Vert S\right\Vert \leq M_{Y}\left( S\right) ,\forall S\in \mathcal{K}%
_{Y}\left( X,E\right) .
\end{equation*}
\end{lemma}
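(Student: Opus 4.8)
The plan is to verify the subspace and norm axioms together with the embedding into $\mathcal{L}(E,X)$, reducing every assertion to the properties of the norms $\left\Vert \cdot\right\Vert_{X^{n},Y,\tau}$ and $\left\Vert \cdot\right\Vert_{E^{n},Y}$ already recorded in Lemma \ref{lemalema}. Throughout I read $M^{Y}(T)$ as the supremum, over all $n$ and all $w_{1},\dots,w_{n}$ with nonzero right-hand side, of the ratio $\left\Vert (Tw_{1},\dots,Tw_{n})\right\Vert_{X^{n},Y,\tau}/\left\Vert (w_{1},\dots,w_{n})\right\Vert_{E^{n},Y}$; with this reading the defining inequality (\ref{20073}) holds with $C=M^{Y}(T)$ as soon as $M^{Y}(T)<\infty$, so that $M^{Y}(T)$ itself is an admissible constant and may legitimately serve as a norm.

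First I would establish the embedding $\mathcal{K}^{Y}(E,X)\subset\mathcal{L}(E,X)$ and the estimate $\left\Vert T\right\Vert\le M^{Y}(T)$ by specializing (\ref{20073}) to $n=1$. For a single $w\in E$, equation (\ref{igualdad812}) gives $\left\Vert (Tw)\right\Vert_{X^{1},Y,\tau}=\left\Vert e_{1}\right\Vert_{Y}\left\Vert Tw\right\Vert_{X}$, while the definition of $\left\Vert \cdot\right\Vert_{E^{1},Y}$ yields $\left\Vert (w)\right\Vert_{E^{1},Y}=\left\Vert e_{1}\right\Vert_{Y}\left\Vert w\right\Vert_{E}$. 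Cancelling the positive factor $\left\Vert e_{1}\right\Vert_{Y}$ turns (\ref{20073}) into $\left\Vert Tw\right\Vert_{X}\le C\left\Vert w\right\Vert_{E}$ for every admissible $C$; hence $T$ is continuous with $\left\Vert T\right\Vert\le C$, and taking the infimum over admissible $C$ gives $\left\Vert T\right\Vert\le M^{Y}(T)$.

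Next I would check that $\mathcal{K}^{Y}(E,X)$ is a subspace and that $M^{Y}$ is a norm, deducing each property from its pointwise analogue for $\left\Vert \cdot\right\Vert_{X^{n},Y,\tau}$. The zero operator is trivially $Y$-convex. Absolute homogeneity $M^{Y}(\lambda T)=|\lambda|M^{Y}(T)$ follows from Lemma \ref{lemalema}$(i)$, since $\left\Vert (\lambda Tw_{1},\dots,\lambda Tw_{n})\right\Vert_{X^{n},Y,\tau}=|\lambda|\left\Vert (Tw_{1},\dots,Tw_{n})\right\Vert_{X^{n},Y,\tau}$. Subadditivity is the main computation: writing $(T_{1}+T_{2})w_{j}=T_{1}w_{j}+T_{2}w_{j}$ and applying the triangle inequality (\ref{835}) from Lemma \ref{lemalema}$(iii)$ gives
\begin{equation*}
\left\Vert (T_{1}w_{1}+T_{2}w_{1},\dots,T_{1}w_{n}+T_{2}w_{n})\right\Vert_{X^{n},Y,\tau}\le \left\Vert (T_{1}w_{1},\dots,T_{1}w_{n})\right\Vert_{X^{n},Y,\tau}+\left\Vert (T_{2}w_{1},\dots,T_{2}w_{n})\right\Vert_{X^{n},Y,\tau},
\end{equation*}
and bounding each summand by $M^{Y}(T_{i})\left\Vert (w_{1},\dots,w_{n})\right\Vert_{E^{n},Y}$ shows $T_{1}+T_{2}\in\mathcal{K}^{Y}(E,X)$ with $M^{Y}(T_{1}+T_{2})\le M^{Y}(T_{1})+M^{Y}(T_{2})$. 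Definiteness is immediate from the previous step: $M^{Y}(T)=0$ forces $\left\Vert T\right\Vert=0$, i.e. $T=0$.

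Part $(ii)$ is entirely symmetric. The only structural difference is that the relevant homogeneity and triangle inequality now live on the domain side, in the norm $\left\Vert \cdot\right\Vert_{E^{n},Y}$; these follow directly from the fact that $\left\Vert \cdot\right\Vert_{\mathbb{R}^{n},Y}$ is a lattice norm, using $\left\Vert S_{1}x_{j}+S_{2}x_{j}\right\Vert_{E}\le\left\Vert S_{1}x_{j}\right\Vert_{E}+\left\Vert S_{2}x_{j}\right\Vert_{E}$ together with the monotonicity and triangle inequality of $\left\Vert \cdot\right\Vert_{\mathbb{R}^{n},Y}$. The continuity bound $\left\Vert S\right\Vert\le M_{Y}(S)$ again comes from the $n=1$ case via (\ref{igualdad812}). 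I expect no genuine obstacle in either part; the only points requiring care are the bookkeeping of the factor $\left\Vert e_{1}\right\Vert_{Y}$ in the reduction to $n=1$ and the verification that $M^{Y}$ (respectively $M_{Y}$), defined as a supremum of ratios, indeed satisfies its defining inequality with constant $C=M^{Y}(T)$ so that it qualifies as a norm.
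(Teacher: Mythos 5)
Your proof is correct and takes essentially the same route as the paper's: both reduce continuity to the $n=1$ case via equation (\ref{igualdad812}), obtain $\left\Vert T\right\Vert \leq M^{Y}\left( T\right) $, and deduce definiteness of $M^{Y}$ from that bound, with part $ii)$ handled symmetrically. The only difference is that you spell out the homogeneity and subadditivity verifications (via Lemma \ref{lemalema}) that the paper dismisses as clear.
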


\begin{proof}
Clearly $\mathcal{K}^{Y}\left( E,X\right) $ is a vector space. Let $T\in 
\mathcal{K}^{Y}\left( E,X\right) $. Then, using \ref{igualdad812}, for each $%
w\in E,$%
\begin{equation*}
\left\Vert e_{1}\right\Vert _{Y}\left\Vert Tw\right\Vert _{X}=\left\Vert
Tw\right\Vert _{X^{1},Y,\tau }\leq M^{Y}\left( T\right) \left\Vert \left(
w\right) \right\Vert _{E^{1},Y}=M^{Y}\left( T\right) \left\Vert
e_{1}\right\Vert _{Y}\left\Vert w\right\Vert _{E}.
\end{equation*}%
\indent Thus, $T$ is continuous and $\left\Vert T\right\Vert \leq
M^{Y}\left( T\right) .$ Hence $\mathcal{K}_{Y}\left( X,E\right) $ is a
vector subspace of $\mathcal{L}\left( X,E\right) .$\newline
\newline
\indent On the other hand, let $T\in \mathcal{K}^{Y}\left( E,X\right) $ be
such that $M^{Y}\left( T\right) =0.$ Then $\left\Vert T\right\Vert \leq
M^{Y}\left( T\right) =0$ and consequently $T=0.$ Since clearly $M^{Y}$
satisfies the triangle inequality$,$ it follows that $M^{Y}$ is a norm on $%
\mathcal{K}^{Y}\left( E,X\right) .$\newline
\newline
\indent The proof of $ii)$ is analogous.
\end{proof}

\bigskip

From now on we will write%
\begin{equation*}
\left\Vert T\right\Vert _{K^{Y}}:=M^{Y}\left( T\right) ,\forall T\in 
\mathcal{K}^{Y}\left( E,X\right)
\end{equation*}%
and%
\begin{equation*}
\left\Vert S\right\Vert _{K_{Y}}:=M_{Y}\left( S\right) ,\forall S\in 
\mathcal{K}_{Y}\left( X,E\right) .
\end{equation*}

\begin{theorem}
\label{tma1405}$\left( \mathcal{K}^{Y}\left( E,X\right) ,\left\Vert \cdot
\right\Vert _{K^{Y}}\right) $ and $\left( \mathcal{K}_{Y}\left( X,E\right)
,\left\Vert \cdot \right\Vert _{K_{Y}}\right) $ are Banach spaces$.$
\end{theorem}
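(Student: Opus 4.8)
The plan is to verify sequential completeness directly: take a sequence $\left\{ T_{m}\right\} _{m=1}^{\infty }\subset \mathcal{K}^{Y}\left( E,X\right) $ that is Cauchy for $\left\Vert \cdot \right\Vert _{K^{Y}}=M^{Y}$ and produce a $Y$-convex limit to which it converges in this norm. First I would use Lemma \ref{propoYconvimpliescont}, which gives $\left\Vert T_{m}-T_{k}\right\Vert \leq M^{Y}\left( T_{m}-T_{k}\right) $, so that $\left\{ T_{m}\right\} $ is also Cauchy in $\mathcal{L}\left( E,X\right) $. Since $X$ is a Banach lattice, $\mathcal{L}\left( E,X\right) $ is complete, so there exists $T\in \mathcal{L}\left( E,X\right) $ with $\left\Vert T_{m}-T\right\Vert \rightarrow 0$; in particular $T_{m}w\rightarrow Tw$ in $X$ for every $w\in E$. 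It remains to show $T\in \mathcal{K}^{Y}\left( E,X\right) $ and that $M^{Y}\left( T_{m}-T\right) \rightarrow 0$.

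The crucial ingredient throughout is that, by Proposition \ref{propoca}, the norm $\left\Vert \cdot \right\Vert _{X^{n},Y,\tau }$ is equivalent to $\left\Vert \cdot \right\Vert _{X^{n},1}$; hence coordinatewise convergence $T_{m}w_{j}\rightarrow Tw_{j}$ in $X$ $\left( 1\leq j\leq n\right) $ implies $\left( T_{m}w_{1},...,T_{m}w_{n}\right) \rightarrow \left( Tw_{1},...,Tw_{n}\right) $ in the norm $\left\Vert \cdot \right\Vert _{X^{n},Y,\tau }$, so this norm is continuous along such sequences. To see $T$ is $Y$-convex, note that a Cauchy sequence is bounded, so there is $C>0$ with $M^{Y}\left( T_{m}\right) \leq C$ for all $m$. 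Fixing $n\in \mathbb{N}$ and $w_{1},...,w_{n}\in E$, I would pass to the limit in
\begin{equation*}
\left\Vert \left( T_{m}w_{1},...,T_{m}w_{n}\right) \right\Vert _{X^{n},Y,\tau }\leq M^{Y}\left( T_{m}\right) \left\Vert \left( w_{1},...,w_{n}\right) \right\Vert _{E^{n},Y}\leq C\left\Vert \left( w_{1},...,w_{n}\right) \right\Vert _{E^{n},Y},
\end{equation*}
using the continuity just noted on the left-hand side, to obtain $\left\Vert \left( Tw_{1},...,Tw_{n}\right) \right\Vert _{X^{n},Y,\tau }\leq C\left\Vert \left( w_{1},...,w_{n}\right) \right\Vert _{E^{n},Y}$. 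As $n$ and the $w_{j}$'s are arbitrary, $T\in \mathcal{K}^{Y}\left( E,X\right) $ with $M^{Y}\left( T\right) \leq C$.

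For convergence in $\left\Vert \cdot \right\Vert _{K^{Y}}$, I would run the same limiting argument on differences. Given $\varepsilon >0$, choose $N$ so that $M^{Y}\left( T_{m}-T_{k}\right) <\varepsilon $ for $m,k\geq N$; then for every $n$ and all $w_{1},...,w_{n}$,
\begin{equation*}
\left\Vert \left( \left( T_{m}-T_{k}\right) w_{1},...,\left( T_{m}-T_{k}\right) w_{n}\right) \right\Vert _{X^{n},Y,\tau }\leq \varepsilon \left\Vert \left( w_{1},...,w_{n}\right) \right\Vert _{E^{n},Y}.
\end{equation*}
Holding $m\geq N$ fixed and letting $k\rightarrow \infty $, the continuity of $\left\Vert \cdot \right\Vert _{X^{n},Y,\tau }$ yields the same bound with $T_{m}-T_{k}$ replaced by $T_{m}-T$; taking the supremum over $n$ and the $w_{j}$'s gives $M^{Y}\left( T_{m}-T\right) \leq \varepsilon $ for all $m\geq N$, i.e. $T_{m}\rightarrow T$ in $\mathcal{K}^{Y}\left( E,X\right) $. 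The proof for $\left( \mathcal{K}_{Y}\left( X,E\right) ,\left\Vert \cdot \right\Vert _{K_{Y}}\right) $ is entirely analogous, with the roles of the two norms interchanged in the defining inequality (\ref{20074}) and using completeness of $\mathcal{L}\left( X,E\right) $. The step requiring the most care is justifying the passage to the limit inside $\left\Vert \cdot \right\Vert _{X^{n},Y,\tau }$; this is where the equivalence from Proposition \ref{propoca} is indispensable, since it converts norm-operator convergence into convergence in the $\tau $-norm on each fixed $X^{n}$.
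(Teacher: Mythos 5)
Your proof is correct, and it reaches Theorem \ref{tma1405} by a genuinely different route from the paper. The paper invokes the absolutely-convergent-series criterion for completeness: given $\sum_{m=1}^{\infty }\left\Vert T_{m}\right\Vert _{K^{Y}}=M<\infty $, it defines $Tw:=\sum_{m=1}^{\infty }T_{m}\left( w\right) $ (convergent in $X$ because $\left\Vert T_{m}\right\Vert \leq \left\Vert T_{m}\right\Vert _{K^{Y}}$) and then bounds $\left\Vert \left( Tw_{1},...,Tw_{n}\right) \right\Vert _{X^{n},Y,\tau }$ by $M\left\Vert \left( w_{1},...,w_{n}\right) \right\Vert _{E^{n},Y}$ in a single chain, pulling the limit of the partial sums out of the $\tau $-norm and applying the triangle inequality termwise. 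You instead argue directly with Cauchy sequences: Lemma \ref{propoYconvimpliescont} together with completeness of $\mathcal{L}\left( E,X\right) $ produces the candidate limit $T$, and two separate limit passages (one for $T\in \mathcal{K}^{Y}\left( E,X\right) $, one for $M^{Y}\left( T_{m}-T\right) \rightarrow 0$) finish the argument; both passages rest on exactly the fact the paper uses tacitly when it exchanges $\left\Vert \cdot \right\Vert _{X^{n},Y,\tau }$ with a limit, namely that Proposition \ref{propoca} makes coordinatewise convergence imply convergence in the $\tau $-norm. Your version is longer but has a concrete advantage: it explicitly proves the norm convergence $M^{Y}\left( T_{m}-T\right) \rightarrow 0$, whereas the paper stops after showing that the limit operator is $Y$-convex --- strictly speaking, the series criterion also requires that the partial sums converge to $T$ in $\left\Vert \cdot \right\Vert _{K^{Y}}$, which the paper leaves to the easy but unstated tail estimate $\left\Vert T-\sum_{m=1}^{k}T_{m}\right\Vert _{K^{Y}}\leq \sum_{m=k+1}^{\infty }\left\Vert T_{m}\right\Vert _{K^{Y}}$. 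The paper's version, in exchange, avoids all epsilon bookkeeping. One small correction to your closing remark: in the concave case the limit is passed inside $\left\Vert \cdot \right\Vert _{E^{n},Y}$ on the left of (\ref{20074}), so what is needed there is not Proposition \ref{propoca} but the simpler equivalence of $\left\Vert \cdot \right\Vert _{E^{n},Y}$ and $\left\Vert \cdot \right\Vert _{E^{n},1}$ noted in Section 3; the right-hand side $\left\Vert \left( x_{1},...,x_{n}\right) \right\Vert _{X^{n},Y,\tau }$ needs no limiting argument at all, since only the operator varies.
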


\begin{proof}
Let $\left\{ T_{m}\right\} _{n=1}^{\infty }\subset \mathcal{K}^{Y}\left(
E,X\right) $ such that $\sum_{m=1}^{\infty }\left\Vert T_{m}\right\Vert
_{K^{Y}}=M<\infty .$Then, for each $w\in E,$ 
\begin{equation*}
\sum_{m=1}^{\infty }\left\Vert T_{m}\left( w\right) \right\Vert _{X}\leq
\sum_{m=1}^{\infty }\left\Vert T_{m}\right\Vert \left\Vert w\right\Vert
_{E}\leq \sum_{m=1}^{\infty }\left\Vert T_{m}\right\Vert _{K^{Y}}\left\Vert
w\right\Vert _{E}\leq M\left\Vert w\right\Vert _{E}
\end{equation*}%
and, as $X$ is complete$,$ $\sum_{m=1}^{\infty }T_{m}\left( w\right) $
converges. Let us define the linear operator $T:E\rightarrow X$ by 
\begin{equation}
T\left( w\right) :=\sum_{m=1}^{\infty }T_{m}\left( w\right) ,\forall w\in E.
\label{dff}
\end{equation}%
\newline
\indent Let $n\in 
%TCIMACRO{\U{2115} }%
%BeginExpansion
\mathbb{N}
%EndExpansion
$ and $w_{1},...,w_{n}\in E.$ Then%
\begin{eqnarray*}
\left\Vert \left( Tw_{1},...,Tw_{n}\right) \right\Vert _{X^{n},Y,\tau }%
\hspace{-0.2cm} &=&\left\Vert \left( \lim_{k\rightarrow \infty
}\sum_{m=1}^{k}T_{m}\left( w_{1}\right) ,...,\lim_{k\rightarrow \infty
}\sum_{m=1}^{k}T_{m}\left( w_{n}\right) \right) \right\Vert _{X^{n},Y,\tau }
\\
&& \\
&=&\lim_{k\rightarrow \infty }\left\Vert \left( \sum_{m=1}^{k}T_{m}\left(
w_{1}\right) ,...,\sum_{m=1}^{k}T_{m}\left( w_{n}\right) \right) \right\Vert
_{X^{n},Y,\tau } \\
&& \\
&\leq &\lim_{k\rightarrow \infty }\sum_{m=1}^{k}\left\Vert \left(
T_{m}w_{1},...,T_{m}w_{n}\right) \right\Vert _{X^{n},Y,\tau } \\
&& \\
&\leq &\sum_{m=1}^{\infty }\left\Vert T_{m}\right\Vert _{K^{Y}}\left\Vert
\left( w_{1},...,w_{n}\right) \right\Vert _{E^{n},Y} \\
&& \\
&=&M\left\Vert \left( w_{1},...,w_{n}\right) \right\Vert _{E^{n},Y}.
\end{eqnarray*}%
\indent Thus $T\in \mathcal{K}\left( E,X\right) $ and consequently $\mathcal{%
K}^{Y}\left( E,X\right) $ is complete.\newline
\newline
\indent The proof for $\mathcal{K}_{Y}\left( X,E\right) $ is analogous.
\end{proof}

\bigskip

Recall that, given Banach spaces $E,F,$ an operator $T\in \mathcal{L}\left(
E,F\right) $ is called a \textit{finite rank operator} if its image has
finite dimension. This is equivalent to the existence of $n\in 
%TCIMACRO{\U{2115} }%
%BeginExpansion
\mathbb{N}
%EndExpansion
,$ $\varphi _{1},...,\varphi _{n}\in E^{\ast }$ and $w_{1},...,w_{n}\in F$
such that%
\begin{equation*}
T\left( x\right) =\sum\limits_{j=1}^{n}\varphi _{j}\left( x\right) \cdot
w_{j},\forall x\in E.
\end{equation*}%
\indent When $n=1$ we say that $T$ is a \textit{rank }$1$\textit{\ operator}$%
.$

\begin{proposition}
Let $X$ be a Banach lattice and $E$ a Banach space. Then every finite rank
operator $T:E\rightarrow X$ is $Y$-convex.
\end{proposition}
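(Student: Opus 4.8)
The plan is to reduce the general finite rank case to rank one operators and then settle the rank one case by a direct application of Krivine's functional calculus. I would write a finite rank operator as $T=\sum_{i=1}^{r}T_{i}$, where each $T_{i}x=\varphi_{i}(x)\,w_{i}$ with $\varphi_{i}\in E^{\ast}$ and $w_{i}\in X$ is a rank one (or zero) operator. By Lemma \ref{propoYconvimpliescont}, $\mathcal{K}^{Y}(E,X)$ is a vector subspace of $\mathcal{L}(E,X)$; hence it suffices to prove that every rank one operator $Tx=\varphi(x)\,w$ is $Y$-convex.

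For the rank one case, fix $u_{1},\dots ,u_{n}\in E$ and set $c_{j}:=\varphi(u_{j})$, so that $Tu_{j}=c_{j}\,w$. The main computation is to evaluate $\left\Vert (Tu_{1},\dots ,Tu_{n})\right\Vert_{Y}=\tau_{n,(c_{1}w,\dots ,c_{n}w)}\left(\left\Vert\cdot\right\Vert_{\mathbb{R}^{n},Y}\right)$. I would introduce the linear map $G\in\mathcal{H}_{1}^{n}$ defined by $G(t)=(c_{1}t,\dots ,c_{n}t)$, and note that $G(w)=(c_{1}w,\dots ,c_{n}w)=(Tu_{1},\dots ,Tu_{n})$. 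Applying Lemma \ref{lematmm} with domain dimension $1$ and codomain dimension $n$ gives $\tau_{n,G(w)}\left(\left\Vert\cdot\right\Vert_{\mathbb{R}^{n},Y}\right)=\tau_{1,w}\left(\left\Vert\cdot\right\Vert_{\mathbb{R}^{n},Y}\circ G\right)$. Since $\left\Vert\cdot\right\Vert_{\mathbb{R}^{n},Y}$ is a norm and $G$ is linear, $\left\Vert\cdot\right\Vert_{\mathbb{R}^{n},Y}\circ G=\left\Vert(c_{1},\dots ,c_{n})\right\Vert_{\mathbb{R}^{n},Y}\,\left\vert\pi_{1,1}\right\vert$, and using $\tau_{1,w}\left(\left\vert\pi_{1,1}\right\vert\right)=w\vee(-w)=\left\vert w\right\vert$ (from the order-preserving property of $\tau_{1,w}$) together with (\ref{fr3}) I would obtain
\[
\left\Vert (Tu_{1},\dots ,Tu_{n})\right\Vert_{Y}=\left\Vert(c_{1},\dots ,c_{n})\right\Vert_{\mathbb{R}^{n},Y}\,\left\vert w\right\vert .
\]

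Taking the $X$-norm and using that $\left\Vert\,\left\vert w\right\vert\,\right\Vert_{X}=\left\Vert w\right\Vert_{X}$ yields $\left\Vert (Tu_{1},\dots ,Tu_{n})\right\Vert_{X^{n},Y,\tau}=\left\Vert(\varphi(u_{1}),\dots ,\varphi(u_{n}))\right\Vert_{\mathbb{R}^{n},Y}\,\left\Vert w\right\Vert_{X}$. Finally, since $\left\vert\varphi(u_{j})\right\vert\le\left\Vert\varphi\right\Vert_{E^{\ast}}\left\Vert u_{j}\right\Vert_{E}$ for each $j$ and $\left\Vert\cdot\right\Vert_{\mathbb{R}^{n},Y}$ is a lattice norm, I would bound $\left\Vert(\varphi(u_{1}),\dots ,\varphi(u_{n}))\right\Vert_{\mathbb{R}^{n},Y}\le\left\Vert\varphi\right\Vert_{E^{\ast}}\left\Vert(\left\Vert u_{1}\right\Vert_{E},\dots ,\left\Vert u_{n}\right\Vert_{E})\right\Vert_{\mathbb{R}^{n},Y}=\left\Vert\varphi\right\Vert_{E^{\ast}}\left\Vert(u_{1},\dots ,u_{n})\right\Vert_{E^{n},Y}$, so that $T$ is $Y$-convex with constant $\left\Vert\varphi\right\Vert_{E^{\ast}}\left\Vert w\right\Vert_{X}$.

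The technical heart is the Krivine-calculus identity $\left\Vert(c_{1}w,\dots ,c_{n}w)\right\Vert_{Y}=\left\Vert(c_{1},\dots ,c_{n})\right\Vert_{\mathbb{R}^{n},Y}\,\left\vert w\right\vert$; the reduction to rank one and the final order estimate are routine. The one point requiring care is the passage $\left\Vert\cdot\right\Vert_{\mathbb{R}^{n},Y}\circ G=\left\Vert(c_{1},\dots ,c_{n})\right\Vert_{\mathbb{R}^{n},Y}\,\left\vert\pi_{1,1}\right\vert$, which relies on $\left\Vert\cdot\right\Vert_{\mathbb{R}^{n},Y}$ being homogeneous in the full scalar sense $\left\Vert\lambda a\right\Vert=\left\vert\lambda\right\vert\,\left\Vert a\right\Vert$ rather than merely positively homogeneous, and on handling the absolute value correctly through $\tau_{1,w}(\left\vert\pi_{1,1}\right\vert)=\left\vert w\right\vert$.
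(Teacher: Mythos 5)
Your proof is correct and arrives at the same constant $\left\Vert \varphi \right\Vert \left\Vert w\right\Vert _{X}$ as the paper, but the central computation is carried out by a different mechanism. Both arguments reduce to rank one operators via Lemma \ref{propoYconvimpliescont}. At the key step, the paper first dominates componentwise, $\left\vert \left\langle u_{j},\varphi \right\rangle x\right\vert \leq \left\Vert \varphi \right\Vert \left\Vert u_{j}\right\Vert _{E}\left\vert x\right\vert $, using monotonicity and homogeneity of the Krivine expression (Lemma \ref{lemalema}), and then invokes the dual representation of Proposition \ref{lema69} to get the bound $\left\Vert \left( \left\Vert u_{1}\right\Vert _{E}\left\vert x\right\vert ,...,\left\Vert u_{n}\right\Vert _{E}\left\vert x\right\vert \right) \right\Vert _{Y}\leq \left\Vert \left( u_{1},...,u_{n}\right) \right\Vert _{E^{n},Y}\left\vert x\right\vert $. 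You instead apply the composition Lemma \ref{lematmm} with $G(t)=(c_{1}t,...,c_{n}t)\in \mathcal{H}_{1}^{n}$, which produces the exact identity $\left\Vert \left( c_{1}w,...,c_{n}w\right) \right\Vert _{Y}=\left\Vert \left( c_{1},...,c_{n}\right) \right\Vert _{\mathbb{R}^{n},Y}\left\vert w\right\vert $, and only afterwards apply the scalar estimate $\left\vert c_{j}\right\vert \leq \left\Vert \varphi \right\Vert \left\Vert u_{j}\right\Vert _{E}$ inside the lattice norm of $Y$ on $\mathbb{R}^{n}$. Your route buys an equality rather than an inequality and stays within the elementary part of the machinery (uniqueness in Krivine's theorem and Lemma \ref{lematmm}), avoiding Proposition \ref{lema69}, whose proof rests on the compactness/approximation Lemma \ref{2555}; the paper's route is shorter on the page because Proposition \ref{lema69} is already available and is reused elsewhere, for instance to prove that regular operators are $Y$-regular. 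The delicate points you flag, namely that $\left\Vert \cdot \right\Vert _{\mathbb{R}^{n},Y}\circ G=\left\Vert \left( c_{1},...,c_{n}\right) \right\Vert _{\mathbb{R}^{n},Y}\left\vert \pi _{1,1}\right\vert $ requires absolute rather than merely positive homogeneity, and that $\tau _{1,w}\left( \left\vert \pi _{1,1}\right\vert \right) =w\vee (-w)=\left\vert w\right\vert $ by the order-preserving property, are handled correctly.
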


\begin{proof}
Let $T:E\rightarrow X$ be a rank $1$ operator. Take $\varphi \in E^{\ast }$
and $x\in X$ such that $T\left( w\right) =\left\langle w,\varphi
\right\rangle x,w\in E.$ Then, for every $n\in 
%TCIMACRO{\U{2115} }%
%BeginExpansion
\mathbb{N}
%EndExpansion
$ and $w_{1},...,w_{n}\in E$,%
\begin{eqnarray}
\left\Vert \left( Tw_{1},...,Tw_{n}\right) \right\Vert _{X^{n},Y,\tau }
&=&\left\Vert \left( \left\langle w_{1},\varphi \right\rangle
x,...,\left\langle w_{n},\varphi \right\rangle x\right) \right\Vert
_{X^{n},Y,\tau }  \label{trt} \\
&&  \notag \\
&\leq &\left\Vert \varphi \right\Vert \left\Vert \left( \left\Vert
w_{1}\right\Vert _{E}\left\vert x\right\vert ,...,\left\Vert
w_{n}\right\Vert _{E}\left\vert x\right\vert \right) \right\Vert
_{X^{n},Y,\tau }  \notag \\
&&  \notag \\
&=&\left\Vert \varphi \right\Vert \left\Vert \left\Vert \left( \left\Vert
w_{1}\right\Vert _{E}\left\vert x\right\vert ,...,\left\Vert
w_{n}\right\Vert _{E}\left\vert x\right\vert \right) \right\Vert
_{Y}\right\Vert _{X}  \notag
\end{eqnarray}%
and by Proposition \ref{lema69}%
\begin{eqnarray}
\left\Vert \left( \left\Vert w_{1}\right\Vert _{E}\left\vert x\right\vert
,...,\left\Vert w_{n}\right\Vert _{E}\left\vert x\right\vert \right)
\right\Vert _{Y} &=&\sup \left\{ \sum_{j=1}^{n}a_{j}\left\Vert
w_{j}\right\Vert _{E}\left\vert x\right\vert :\left\Vert \left(
a_{1},...,a_{n}\right) \right\Vert _{Y^{\ast }}\leq 1\right\}  \notag \\
&&  \notag \\
&\leq &\left\vert x\right\vert \sup \left\{ \sum_{j=1}^{n}a_{j}\left\Vert
w_{j}\right\Vert _{E}:\left\Vert \left( a_{1},...,a_{n}\right) \right\Vert
_{Y^{\ast }}\leq 1\right\}  \notag \\
&&  \notag \\
&=&\left\vert x\right\vert \left\Vert \left( \left\Vert w_{1}\right\Vert
_{E},...,\left\Vert w_{n}\right\Vert _{E}\right) \right\Vert _{Y}  \notag \\
&&  \notag \\
&=&\left\vert x\right\vert \left\Vert \left( w_{1},...,w_{n}\right)
\right\Vert _{E^{n},Y}.  \label{trt2}
\end{eqnarray}%
\indent Thus, by $\left( \ref{trt}\right) $ and $\left( \ref{trt2}\right) ,$%
\begin{eqnarray*}
\left\Vert \left( Tw_{1},...,Tw_{n}\right) \right\Vert _{X^{n},Y,\tau }
&\leq &\left\Vert \varphi \right\Vert \left\Vert \left\vert x\right\vert
\left\Vert \left( w_{1},...,w_{n}\right) \right\Vert _{E^{n},Y}\right\Vert
_{X} \\
&& \\
&=&\left\Vert \varphi \right\Vert \left\Vert x\right\Vert _{X}\left\Vert
\left( w_{1},...,w_{n}\right) \right\Vert _{E^{n},Y}.
\end{eqnarray*}%
\indent Therefore $T$ is $Y$-convex. The result follows since every finite
rank operator can be expressed as a linear combination of rank $1$ operators.
\end{proof}

\bigskip

In Section 6 we will prove that finite rank operators are also $Y$-concave.

\section{Composition of $Y$-convex and $Y$-concave\protect\linebreak %
operators}

In this section we will analyze conditions under which a composition of
operators is $Y$-convex or $Y$-concave. For this we present a generalization
of the classical space $\Lambda _{p}$, consisting of $p$-regular operators (%
\cite{Bukhvalov}, \cite[p. 79]{Okada}, \cite{E.S.P. Tradacete}$)$.

\begin{definition}
Let $X$ and $W$ be Banach lattices. We denote the space of linear operators $%
T:X\rightarrow W$ for which there exists a constant $C>0$ satisfying%
\begin{equation}
\left\Vert \left( Tx_{1},...,Tx_{n}\right) \right\Vert _{W^{n},Y,\tau }\leq
C\left\Vert \left( x_{1},...,x_{n}\right) \right\Vert _{X^{n},Y,\tau },n\in 
%TCIMACRO{\U{2115} }%
%BeginExpansion
\mathbb{N}
%EndExpansion
,\left( x_{1},...,x_{n}\right) \in X^{n}  \label{2307}
\end{equation}%
by $\Lambda _{Y}\left( X,W\right) .$ The smallest constant satisfying (\ref%
{2307}) will be indicated by $\left\Vert T\right\Vert _{\Lambda _{Y}}.$
\end{definition}

\bigskip

Since $\left\Vert \cdot \right\Vert _{W^{n},Y,\tau }$ is a norm, we have
that $\Lambda _{Y}\left( X,W\right) $ is a linear space.

\begin{lemma}
The function $\left\Vert \cdot \right\Vert _{\Lambda _{Y}}$ defines a norm
on $\Lambda _{Y}\left( X,W\right) $ and\label{PropoLambdacompleto}%
\begin{equation}
\left\Vert T\right\Vert \leq \left\Vert T\right\Vert _{\Lambda _{Y}}.
\label{Nole37}
\end{equation}
\end{lemma}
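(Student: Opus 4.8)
The plan is to follow the same strategy as in Lemma~\ref{propoYconvimpliescont}: first extract the operator-norm estimate from the defining inequality by specializing to $n=1$, and then read off the norm axioms, using the definiteness that this estimate provides. Since the remark preceding the statement already records that $\Lambda_Y(X,W)$ is a linear space, what remains is to verify the three norm axioms and the estimate $\|T\|\le\|T\|_{\Lambda_Y}$.

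First I would establish $\|T\|\le\|T\|_{\Lambda_Y}$. Fix $T\in\Lambda_Y(X,W)$ and take $n=1$ in (\ref{2307}): for any $x\in X$ the defining inequality reads $\|Tx\|_{W^{1},Y,\tau}\le\|T\|_{\Lambda_Y}\,\|x\|_{X^{1},Y,\tau}$. Applying the identity (\ref{igualdad812}), which is valid for any Banach lattice, to both sides gives $\|e_{1}\|_{Y}\,\|Tx\|_{W}$ on the left and $\|e_{1}\|_{Y}\,\|x\|_{X}$ on the right. Since $e_{1}$ is a nonzero element of the B.s.l. $Y$ and $\|\cdot\|_{Y}$ is a genuine norm, $\|e_{1}\|_{Y}>0$, so cancelling this factor yields $\|Tx\|_{W}\le\|T\|_{\Lambda_Y}\,\|x\|_{X}$ for every $x$, hence $\|T\|\le\|T\|_{\Lambda_Y}$. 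In particular every $T\in\Lambda_Y(X,W)$ is continuous, so $\Lambda_Y(X,W)\subset\mathcal{L}(X,W)$.

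Next I would verify the norm axioms. Nonnegativity is immediate, $\|\cdot\|_{\Lambda_Y}$ being an infimum of positive constants. For definiteness, if $\|T\|_{\Lambda_Y}=0$ then the estimate just proved forces $\|T\|\le 0$, so $T=0$. Absolute homogeneity follows from the fact that $\|\cdot\|_{W^{n},Y,\tau}$ is a norm (Proposition~\ref{propoca} applied to $W$), so that $\|(\lambda Tx_{1},\ldots,\lambda Tx_{n})\|_{W^{n},Y,\tau}=|\lambda|\,\|(Tx_{1},\ldots,Tx_{n})\|_{W^{n},Y,\tau}$; for $\lambda\neq 0$ this shows the optimal constant for $\lambda T$ equals $|\lambda|$ times that for $T$, and the case $\lambda=0$ is trivial. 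For the triangle inequality, let $T,S\in\Lambda_Y(X,W)$ and $(x_{1},\ldots,x_{n})\in X^{n}$; subadditivity of the norm $\|\cdot\|_{W^{n},Y,\tau}$ gives
\begin{equation*}
\|((T+S)x_{1},\ldots,(T+S)x_{n})\|_{W^{n},Y,\tau}\le\|(Tx_{1},\ldots,Tx_{n})\|_{W^{n},Y,\tau}+\|(Sx_{1},\ldots,Sx_{n})\|_{W^{n},Y,\tau},
\end{equation*}
and bounding each summand by the defining inequalities for $T$ and $S$ shows $T+S$ satisfies (\ref{2307}) with constant $\|T\|_{\Lambda_Y}+\|S\|_{\Lambda_Y}$; passing to the infimum yields $\|T+S\|_{\Lambda_Y}\le\|T\|_{\Lambda_Y}+\|S\|_{\Lambda_Y}$.

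I do not expect a genuine obstacle, as the argument runs parallel to Lemma~\ref{propoYconvimpliescont}. The one point that needs care is the $n=1$ reduction: it rests on $\|e_{1}\|_{Y}\neq 0$, which holds precisely because $Y$ is a Banach sequence lattice (so $e_{1}\in Y$) with a genuine norm. This is what legitimizes the passage from the vector-valued inequality to the scalar operator-norm bound, and hence what delivers definiteness of $\|\cdot\|_{\Lambda_Y}$.
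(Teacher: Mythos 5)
Your proposal is correct and follows essentially the same route as the paper: the operator-norm bound is obtained exactly as in the paper's proof, by specializing (\ref{2307}) to $n=1$ and invoking the identity (\ref{igualdad812}), while the norm axioms are deduced from the norm properties of $\left\Vert \cdot \right\Vert _{W^{n},Y,\tau }$ (which the paper leaves implicit and you spell out, including definiteness via the bound $\left\Vert T\right\Vert \leq \left\Vert T\right\Vert _{\Lambda _{Y}}$, just as in Lemma~\ref{propoYconvimpliescont}).
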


\begin{proof}
Let $T\in \Lambda _{Y}\left( X,W\right) .$ From inequality $\left( \ref%
{igualdad812}\right) ,$%
\begin{equation*}
\left\Vert Tx\right\Vert _{W}=\frac{1}{\left\Vert e_{1}\right\Vert _{Y}}%
\left\Vert Tx\right\Vert _{W^{1},Y,\tau }\leq \frac{\left\Vert T\right\Vert
_{\Lambda _{Y}}}{\left\Vert e_{1}\right\Vert _{Y}}\left\Vert x\right\Vert
_{X^{1},Y,\tau }=\left\Vert T\right\Vert _{\Lambda _{Y}}\left\Vert
x\right\Vert _{X},\forall x\in X.
\end{equation*}%
\indent Thus $\Lambda _{Y}\left( X,W\right) \subset \mathcal{L}\left(
X,W\right) $ and $\left\Vert T\right\Vert \leq \left\Vert T\right\Vert
_{\Lambda _{Y}}.$ Finally, from the respective properties of $\left\Vert
\cdot \right\Vert _{W^{n},Y,\tau }$ it follows that $\left\Vert \cdot
\right\Vert _{\Lambda _{Y}}$ defines a norm on $\Lambda _{Y}\left(
X,W\right) .$
\end{proof}

\begin{theorem}
$\Lambda _{Y}\left( X,W\right) $ is a Banach space.
\end{theorem}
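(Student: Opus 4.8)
The plan is to prove completeness by showing that every absolutely convergent series in $\Lambda_Y(X,W)$ converges, exactly as was done for $\mathcal{K}^Y(E,X)$ in the proof of Theorem \ref{tma1405}. So I would start with a sequence $\{T_m\}_{m=1}^\infty \subset \Lambda_Y(X,W)$ satisfying $\sum_{m=1}^\infty \|T_m\|_{\Lambda_Y} = M < \infty$ and aim to produce a limit $T \in \Lambda_Y(X,W)$ with $\sum_{m=1}^k T_m \to T$ in $\|\cdot\|_{\Lambda_Y}$.

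First I would construct the candidate limit. By inequality (\ref{Nole37}) we have $\|T_m\| \leq \|T_m\|_{\Lambda_Y}$, so for each $x \in X$,
\[
\sum_{m=1}^\infty \|T_m x\|_W \leq \sum_{m=1}^\infty \|T_m\|\,\|x\|_X \leq M\|x\|_X.
\]
Since $W$ is complete, $\sum_{m=1}^\infty T_m x$ converges in $W$, and this defines a linear operator $T: X \to W$ by $Tx := \sum_{m=1}^\infty T_m x$.

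Next I would verify that $T \in \Lambda_Y(X,W)$. Fix $n$ and $x_1,\dots,x_n \in X$. For each $j$ we have $\sum_{m=1}^k T_m x_j \to Tx_j$ in $W$ as $k \to \infty$; since convergence in each component is equivalent to convergence in $(W^n,\|\cdot\|_{W^n,Y})$ and, by Proposition \ref{propoca}, the norms $\|\cdot\|_{W^n,Y}$ and $\|\cdot\|_{W^n,Y,\tau}$ are equivalent, the partial-sum vectors converge to $(Tx_1,\dots,Tx_n)$ in $\|\cdot\|_{W^n,Y,\tau}$. Using continuity of this norm and its subadditivity (it is a norm by Lemma \ref{lemalema}), I would estimate
\begin{align*}
\|(Tx_1,\dots,Tx_n)\|_{W^n,Y,\tau} &= \lim_{k\to\infty} \left\|\left(\sum_{m=1}^k T_m x_1,\dots,\sum_{m=1}^k T_m x_n\right)\right\|_{W^n,Y,\tau} \\
&\leq \sum_{m=1}^\infty \|(T_m x_1,\dots,T_m x_n)\|_{W^n,Y,\tau} \\
&\leq \sum_{m=1}^\infty \|T_m\|_{\Lambda_Y}\,\|(x_1,\dots,x_n)\|_{X^n,Y,\tau} = M\|(x_1,\dots,x_n)\|_{X^n,Y,\tau},
\end{align*}
so $T \in \Lambda_Y(X,W)$ with $\|T\|_{\Lambda_Y} \leq M$.

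Finally, applying the very same estimate to the tail $T - \sum_{m=1}^k T_m = \sum_{m=k+1}^\infty T_m$ yields $\|T - \sum_{m=1}^k T_m\|_{\Lambda_Y} \leq \sum_{m=k+1}^\infty \|T_m\|_{\Lambda_Y} \to 0$, which gives norm convergence of the series and hence completeness. I expect the only delicate point to be the interchange of limit and norm in the first line of the display; this rests on the equivalence, furnished by Proposition \ref{propoca}, of $\|\cdot\|_{W^n,Y,\tau}$ with the componentwise norm, so that componentwise convergence of the partial sums transfers to convergence in $\|\cdot\|_{W^n,Y,\tau}$. Everything else is a routine repetition of the argument already used for Theorem \ref{tma1405}.
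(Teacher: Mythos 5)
Your proposal is correct and follows essentially the same route as the paper: both reduce completeness to the convergence of absolutely convergent series, construct the limit operator $T$ via the bound $\|T_m\|\leq\|T_m\|_{\Lambda_Y}$, and then verify $T\in\Lambda_Y(X,W)$ by passing the norm $\|\cdot\|_{W^n,Y,\tau}$ through the limit of partial sums (justified by the norm equivalences of Proposition \ref{propoca}) and applying the triangle inequality termwise. In fact your write-up is slightly more careful than the paper's, since you make explicit the final tail estimate $\bigl\|T-\sum_{m=1}^{k}T_m\bigr\|_{\Lambda_Y}\leq\sum_{m=k+1}^{\infty}\|T_m\|_{\Lambda_Y}\to 0$, which the paper leaves implicit.
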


\begin{proof}
Let $\left\{ T_{m}\right\} _{m=1}^{\infty }\subset $ $\Lambda _{Y}\left(
X,W\right) $ be such that $\sum_{m=1}^{\infty }\left\Vert T_{m}\right\Vert
_{\Lambda _{Y}}<\infty .$ Then 
\begin{equation*}
\sum_{m=1}^{\infty }\left\Vert T_{m}\right\Vert \leq \sum_{m=1}^{\infty
}\left\Vert T_{m}\right\Vert _{\Lambda _{Y}}<\infty
\end{equation*}%
and consequently $\sum_{m=1}^{\infty }T_{m}$ converges in $\mathcal{L}\left(
X,W\right) .$ Therefore, for each $n\in 
%TCIMACRO{\U{2115} }%
%BeginExpansion
\mathbb{N}
%EndExpansion
$ and $x_{1},...,x_{n}\in X,$%
\begin{eqnarray*}
\left\Vert \left( \sum_{m=1}^{\infty }T_{m}x_{1},...,\sum_{m=1}^{\infty
}T_{m}x_{n}\right) \right\Vert _{W^{n},Y,\tau }\hspace{-0.5cm} &=&\hspace{%
-0.2cm}\lim_{k\rightarrow \infty }\left\Vert \left(
\sum_{m=1}^{k}T_{m}x_{1},...,\sum_{m=1}^{k}T_{m}x_{n}\right) \right\Vert
_{W^{n},Y,\tau } \\
&\leq &\lim_{k\rightarrow \infty }\sum_{m=1}^{k}\left\Vert \left(
T_{m}x_{1},...,T_{m}x_{n}\right) \right\Vert _{W^{n},Y,\tau } \\
&\leq &\sum_{m=1}^{\infty }\left\Vert T_{m}\right\Vert _{\Lambda
_{Y}}\left\Vert \left( x_{1},...,x_{n}\right) \right\Vert _{X^{n},Y,\tau }.
\end{eqnarray*}%
\indent Thus $\sum_{m=1}^{\infty }T_{m}\in \Lambda _{Y}\left( X,W\right) $
and then $\Lambda _{Y}\left( X,W\right) $ is complete.
\end{proof}

\begin{proposition}
Let $E$, $F$ be Banach spaces and $X,W$ Banach lattices\smallskip .$\label%
{propocorol}$\newline
\indent$i)$ Let $R\in \mathcal{L}\left( F,E\right) $ and $T\in \mathcal{K}%
^{Y}\left( E,X\right) .$ Then%
\begin{equation*}
T\circ R\in \mathcal{K}^{Y}\left( F,X\right) \text{ and }\left\Vert T\circ
R\right\Vert _{\mathcal{K}^{Y}}\leq \left\Vert T\right\Vert _{\mathcal{K}%
^{Y}}\left\Vert R\right\Vert .
\end{equation*}%
\indent$ii)$ Let $T\in \mathcal{K}^{Y}\left( E,X\right) $ and $L\in \Lambda
_{Y}\left( X,W\right) .$ Then 
\begin{equation*}
L\circ T\in \mathcal{K}^{Y}\left( E,W\right) \text{ and }\left\Vert L\circ
T\right\Vert _{\mathcal{K}^{Y}}\leq \left\Vert L\right\Vert _{\Lambda
_{Y}}\left\Vert T\right\Vert _{\mathcal{K}^{Y}}.
\end{equation*}%
\indent$iii)$ Let $S\in \mathcal{K}_{Y}\left( X,E\right) $ and $R\in 
\mathcal{L}\left( E,F\right) .$ Then 
\begin{equation*}
R\circ S\in \mathcal{K}_{Y}\left( X,F\right) \text{ and }\left\Vert R\circ
S\right\Vert _{\mathcal{K}_{Y}}\leq \left\Vert R\right\Vert \left\Vert
S\right\Vert _{\mathcal{K}_{Y}}.
\end{equation*}%
\indent$iv)$ Let $L\in \Lambda _{Y}\left( X,W\right) $ and $S\in \mathcal{K}%
_{Y}\left( W,E\right) .$ Then 
\begin{equation*}
S\circ L\in \mathcal{K}_{Y}\left( X,E\right) \text{ and }\left\Vert S\circ
L\right\Vert _{\mathcal{K}_{Y}}\leq \left\Vert S\right\Vert _{\mathcal{K}%
_{Y}}\left\Vert L\right\Vert _{\Lambda _{Y}}.
\end{equation*}
\end{proposition}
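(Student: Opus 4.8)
The plan is to establish all four items by concatenating the defining inequality of the outer operator with that of the inner one; in each case the composition is a well-defined continuous linear operator, since $Y$-convex and $Y$-concave operators are continuous by Lemma \ref{propoYconvimpliescont} and $Y$-regular operators are continuous by Lemma \ref{PropoLambdacompleto}. I would split the four statements into two groups. Items $ii)$ and $iv)$ involve only the lattice norm $\|\cdot\|_{\cdot,Y,\tau}$ and reduce to a pure two-step chaining. Items $i)$ and $iii)$ involve a merely continuous operator $R$ feeding into an induced norm $\|\cdot\|_{\cdot,Y}$, and there I need one extra monotonicity observation about the B.s.l. norm.

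For item $ii)$, fix $n$ and $w_1,\dots,w_n\in E$, apply the $Y$-regularity estimate (\ref{2307}) for $L$ to the vector $(Tw_1,\dots,Tw_n)\in X^n$, and then the $Y$-convexity estimate (\ref{20073}) for $T$:
\begin{align*}
\|(L(Tw_1),\dots,L(Tw_n))\|_{W^n,Y,\tau}
&\le \|L\|_{\Lambda_Y}\,\|(Tw_1,\dots,Tw_n)\|_{X^n,Y,\tau} \\
&\le \|L\|_{\Lambda_Y}\,\|T\|_{\mathcal{K}^Y}\,\|(w_1,\dots,w_n)\|_{E^n,Y}.
\end{align*}
This is precisely the $Y$-convexity of $L\circ T$ with the asserted constant. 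Item $iv)$ is structurally identical: for $x_1,\dots,x_n\in X$, apply the $Y$-concavity estimate (\ref{20074}) for $S$ to $(Lx_1,\dots,Lx_n)$ and then (\ref{2307}) to replace $\|(Lx_1,\dots,Lx_n)\|_{W^n,Y,\tau}$ by $\|L\|_{\Lambda_Y}\|(x_1,\dots,x_n)\|_{X^n,Y,\tau}$.

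For items $i)$ and $iii)$ I would first record that the induced norm $\|\cdot\|_{E^n,Y}$ (resp.\ $\|\cdot\|_{F^n,Y}$) is monotone and absolutely homogeneous in its scalar coordinates: by (\ref{norma_X^n_Y}) it equals the B.s.l.\ norm $\|\cdot\|_{\mathbb{R}^n,Y}$ evaluated at the vector of $E$-norms, and $Y$ is an ideal with order-preserving norm, so $0\le a_j\le b_j$ forces $\|(a_1,\dots,a_n)\|_{\mathbb{R}^n,Y}\le\|(b_1,\dots,b_n)\|_{\mathbb{R}^n,Y}$. For item $i)$, since $\|Rw_j\|_E\le\|R\|\,\|w_j\|_E$, monotonicity and homogeneity give
\[
\|(Rw_1,\dots,Rw_n)\|_{E^n,Y}
=\|(\|Rw_1\|_E,\dots,\|Rw_n\|_E)\|_{\mathbb{R}^n,Y}
\le \|R\|\,\|(w_1,\dots,w_n)\|_{E^n,Y},
\]
and substituting this into (\ref{20073}) applied to $(Rw_1,\dots,Rw_n)$ yields the bound for $T\circ R$. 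Item $iii)$ is the mirror image: from $\|RSf_j\|_F\le\|R\|\,\|Sf_j\|_E$ and the same monotonicity one gets $\|(RSf_1,\dots,RSf_n)\|_{F^n,Y}\le\|R\|\,\|(Sf_1,\dots,Sf_n)\|_{E^n,Y}$, and then (\ref{20074}) for $S$ closes the argument.

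I do not expect a genuine obstacle: the only step beyond a one-line substitution is the monotonicity and homogeneity of $\|\cdot\|_{\mathbb{R}^n,Y}$ used in $i)$ and $iii)$, and that is immediate from the ideal and order-preserving-norm axioms defining a B.s.l. Everything else is a direct concatenation of the three defining inequalities, so each of the four constant bounds falls out of its corresponding two-step estimate.
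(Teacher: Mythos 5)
Your proposal is correct and takes essentially the same route as the paper: items $ii)$ and $iv)$ are proved by chaining the defining estimates of $\Lambda_{Y}$ and $\mathcal{K}^{Y}$ (resp. $\mathcal{K}_{Y}$) in the same order, and items $i)$ and $iii)$ use the monotonicity of $\left\Vert \cdot \right\Vert _{\mathbb{R}^{n},Y}$ applied to $\left\Vert Rw_{j}\right\Vert \leq \left\Vert R\right\Vert \left\Vert w_{j}\right\Vert$, a step the paper uses implicitly and you justify explicitly from the B.s.l. axioms. The only blemish is a harmless typo in item $i)$: since $w_{j}\in F$ there, the right-hand side should read $\left\Vert w_{j}\right\Vert _{F}$ and $\left\Vert \left( w_{1},...,w_{n}\right) \right\Vert _{F^{n},Y}$ rather than the $E$-norms.
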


\begin{proof}
$i)$ Let $n\in 
%TCIMACRO{\U{2115} }%
%BeginExpansion
\mathbb{N}
%EndExpansion
$ and $w_{1},...,w_{n}\in F.$ Then%
\begin{eqnarray*}
\left\Vert \left( T\circ R(w_{1}),...,T\circ R(w_{n}\right) \right\Vert
_{X^{n},Y,\tau }\hspace{-0.2cm} &\leq &\hspace{-0.2cm}\left\Vert
T\right\Vert _{\mathcal{K}^{Y}}\left\Vert \left( \left\Vert
Rw_{1}\right\Vert _{E},...,\left\Vert Rw_{n}\right\Vert _{E}\right)
\right\Vert _{Y} \\
&& \\
&\leq &\hspace{-0.2cm}\left\Vert T\right\Vert _{\mathcal{K}^{Y}}\left\Vert
R\right\Vert \left\Vert \left( \left\Vert w_{1}\right\Vert
_{F},...,\left\Vert w_{n}\right\Vert _{F}\right) \right\Vert _{Y}.
\end{eqnarray*}%
\indent Therefore $T\circ R\in \mathcal{K}^{Y}\left( F,X\right) $ and $%
\left\Vert T\circ R\right\Vert _{\mathcal{K}^{Y}}\leq \left\Vert
T\right\Vert _{\mathcal{K}^{Y}}\left\Vert R\right\Vert .$\newline
\newline
\indent$ii)$ Let $n\in 
%TCIMACRO{\U{2115} }%
%BeginExpansion
\mathbb{N}
%EndExpansion
$ and $w_{1},...,w_{n}\in E.$ Then 
\begin{eqnarray*}
\left\Vert \left( L\circ T\left( w_{1}\right) ,...,L\circ T\left(
w_{n}\right) \right) \right\Vert _{W^{n},Y,\tau } &\leq &\left\Vert
L\right\Vert _{\Lambda _{Y}}\left\Vert \left( T\left( w_{1}\right)
,...,T\left( w_{n}\right) \right) \right\Vert _{X^{n},Y,\tau } \\
&& \\
&\leq &\left\Vert L\right\Vert _{\Lambda _{Y}}\left\Vert T\right\Vert _{%
\mathcal{K}^{Y}}\left\Vert \left( \left\Vert w_{1}\right\Vert
_{E},...,\left\Vert w_{n}\right\Vert _{E}\right) \right\Vert _{Y}.
\end{eqnarray*}%
\indent Therefore $L\circ T\in \mathcal{K}^{Y}\left( E,W\right) $ and $%
\left\Vert L\circ T\right\Vert _{\mathcal{K}^{Y}}\leq \left\Vert
L\right\Vert _{\Lambda _{Y}}\left\Vert T\right\Vert _{\mathcal{K}^{Y}}.$%
\newline
$\newline
$\indent The proofs of $iii)$ and $iv)$ are analogous to the proofs of $i)$
and $ii)$, respectively.
\end{proof}

\begin{corollary}
Let $X_{1},X_{2},X_{3},X_{4}$ be Banach lattices, $R\in \Lambda _{Y}\left(
X_{1},X_{2}\right) $ and $L\in \Lambda _{Y}\left( X_{3},X_{4}\right)
.\smallskip $\newline
\indent$i)$ Let $T\in \mathcal{K}^{Y}\left( X_{2},X_{3}\right) .$ Then 
\begin{equation*}
L\circ T\circ R\in \mathcal{K}^{Y}\left( X_{1},X_{4}\right)
\end{equation*}%
and%
\begin{equation*}
\left\Vert L\circ T\circ R\right\Vert _{\mathcal{K}^{Y}}\leq \left\Vert
L\right\Vert _{\Lambda _{Y}}\left\Vert T\right\Vert _{\mathcal{K}%
^{Y}}\left\Vert R\right\Vert _{\Lambda _{Y}}.\text{ }
\end{equation*}%
\newline
\indent$ii)$ Let $S\in \mathcal{K}_{Y}\left( X_{2},X_{3}\right) .$ Then 
\begin{equation*}
L\circ S\circ R\in \mathcal{K}_{Y}\left( X_{1},X_{4}\right)
\end{equation*}%
and%
\begin{equation*}
\left\Vert L\circ S\circ R\right\Vert _{\mathcal{K}^{Y}}\leq \left\Vert
L\right\Vert _{\Lambda _{Y}}\left\Vert S\right\Vert _{\mathcal{K}%
_{Y}}\left\Vert R\right\Vert _{\Lambda _{Y}}.\text{ }
\end{equation*}
\end{corollary}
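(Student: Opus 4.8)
The plan is to reduce the corollary to the composition Proposition \ref{propocorol} that immediately precedes it, since the corollary is simply the triple-composition version of the two-sided composition results established there. I would not attempt to re-derive anything from the defining inequalities; instead I would iterate the proposition twice, using associativity of operator composition to insert parentheses.

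For part $i)$, I would first group $L\circ T\circ R$ as $L\circ\left(T\circ R\right)$. Since $R\in\Lambda_{Y}\left(X_{1},X_{2}\right)\subset\mathcal{L}\left(X_{1},X_{2}\right)$ by \eqref{Nole37} and $T\in\mathcal{K}^{Y}\left(X_{2},X_{3}\right)$, part $i)$ of Proposition \ref{propocorol} gives $T\circ R\in\mathcal{K}^{Y}\left(X_{1},X_{3}\right)$ with
\begin{equation*}
\left\Vert T\circ R\right\Vert _{\mathcal{K}^{Y}}\leq \left\Vert T\right\Vert _{\mathcal{K}^{Y}}\left\Vert R\right\Vert \leq \left\Vert T\right\Vert _{\mathcal{K}^{Y}}\left\Vert R\right\Vert _{\Lambda_{Y}},
\end{equation*}
where the last step uses \eqref{Nole37}. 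Then, with $L\in\Lambda_{Y}\left(X_{3},X_{4}\right)$ and the $Y$-convex operator $T\circ R\in\mathcal{K}^{Y}\left(X_{1},X_{3}\right)$, part $ii)$ of the same proposition yields $L\circ\left(T\circ R\right)\in\mathcal{K}^{Y}\left(X_{1},X_{4}\right)$ with $\left\Vert L\circ T\circ R\right\Vert_{\mathcal{K}^{Y}}\leq\left\Vert L\right\Vert_{\Lambda_{Y}}\left\Vert T\circ R\right\Vert_{\mathcal{K}^{Y}}$. Chaining the two estimates gives the claimed bound $\left\Vert L\circ T\circ R\right\Vert_{\mathcal{K}^{Y}}\leq\left\Vert L\right\Vert_{\Lambda_{Y}}\left\Vert T\right\Vert_{\mathcal{K}^{Y}}\left\Vert R\right\Vert_{\Lambda_{Y}}$.

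For part $ii)$, the argument is entirely parallel but uses the $Y$-concave composition rules. I would group as $\left(L\circ S\right)\circ R$: since $S\in\mathcal{K}_{Y}\left(X_{2},X_{3}\right)$ and $L\in\Lambda_{Y}\left(X_{3},X_{4}\right)$, part $iv)$ of Proposition \ref{propocorol} gives $L\circ S\in\mathcal{K}_{Y}\left(X_{2},X_{4}\right)$ with $\left\Vert L\circ S\right\Vert_{\mathcal{K}_{Y}}\leq\left\Vert S\right\Vert_{\mathcal{K}_{Y}}\left\Vert L\right\Vert_{\Lambda_{Y}}$. Then, treating $L\circ S$ as a $Y$-concave operator and $R\in\Lambda_{Y}\left(X_{1},X_{2}\right)\subset\mathcal{L}\left(X_{1},X_{2}\right)$, part $iii)$ yields $\left(L\circ S\right)\circ R\in\mathcal{K}_{Y}\left(X_{1},X_{4}\right)$ with norm bounded by $\left\Vert R\right\Vert\left\Vert L\circ S\right\Vert_{\mathcal{K}_{Y}}\leq\left\Vert R\right\Vert_{\Lambda_{Y}}\left\Vert L\circ S\right\Vert_{\mathcal{K}_{Y}}$, again applying \eqref{Nole37} to pass from $\left\Vert R\right\Vert$ to $\left\Vert R\right\Vert_{\Lambda_{Y}}$. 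Combining gives the stated estimate.

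Honestly, I do not expect any real obstacle here: this is a bookkeeping corollary whose entire content is packaged in the preceding proposition. The only points requiring a moment of care are choosing the right pair of parts to combine (for $i)$ the sequence is ``first compose on the right with a continuous operator, then on the left with a $Y$-regular operator''; for $ii)$ it is the mirror image), and remembering that membership in $\Lambda_{Y}$ implies continuity via \eqref{Nole37}, which is what lets me feed $R$ into the parts of Proposition \ref{propocorol} that only require an operator in $\mathcal{L}$ while still getting the norm expressed in terms of $\left\Vert R\right\Vert_{\Lambda_{Y}}$. Since $\left\Vert\cdot\right\Vert_{\Lambda_{Y}}$ dominates the operator norm, no constant is lost and the product bound is exactly as stated.
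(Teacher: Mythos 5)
Your overall strategy (iterate Proposition \ref{propocorol} twice, using $\Lambda_{Y}\subset\mathcal{L}$ via $(\ref{Nole37})$ where only continuity is needed) is exactly what the corollary is meant to follow from --- the paper gives no written proof --- and your part $i)$ executes it correctly: part $i)$ of the proposition handles $T\circ R$ (precomposition with a merely continuous operator), and part $ii)$ handles the postcomposition with $L\in\Lambda_{Y}$.

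In part $ii)$, however, you have crossed the citations, and the swap is not harmless. Parts $iii)$ and $iv)$ of Proposition \ref{propocorol} are asymmetric: $iii)$ covers $R\circ S$ (concave operator applied \emph{first}, then a merely continuous operator), while $iv)$ covers $S\circ L$ (a $Y$-\emph{regular} operator applied first, then the concave operator). So forming $L\circ S$ from $S\in\mathcal{K}_{Y}\left(X_{2},X_{3}\right)$ is an instance of part $iii)$, not $iv)$, with $\left\Vert L\circ S\right\Vert _{\mathcal{K}_{Y}}\leq\left\Vert L\right\Vert \left\Vert S\right\Vert _{\mathcal{K}_{Y}}\leq\left\Vert L\right\Vert _{\Lambda _{Y}}\left\Vert S\right\Vert _{\mathcal{K}_{Y}}$; and forming $\left(L\circ S\right)\circ R$ is an instance of part $iv)$, not $iii)$, giving $\left\Vert \left(L\circ S\right)\circ R\right\Vert _{\mathcal{K}_{Y}}\leq\left\Vert L\circ S\right\Vert _{\mathcal{K}_{Y}}\left\Vert R\right\Vert _{\Lambda _{Y}}$. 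The second miscitation is a genuine error, not a typo: you claim that since $R\in\Lambda_{Y}\subset\mathcal{L}$, part $iii)$ gives the bound $\left\Vert R\right\Vert \left\Vert L\circ S\right\Vert _{\mathcal{K}_{Y}}$. Nothing in the paper supports this, and it should not: precomposing a $Y$-concave operator with a merely bounded operator need not preserve $Y$-concavity, because the concavity estimate requires controlling $\left\Vert \left( Rx_{1},...,Rx_{n}\right) \right\Vert _{X_{2}^{n},Y,\tau }$ by $\left\Vert \left( x_{1},...,x_{n}\right) \right\Vert _{X_{1}^{n},Y,\tau }$, which is precisely $Y$-regularity of $R$ --- this is why the corollary assumes $R\in\Lambda_{Y}$ rather than $R\in\mathcal{L}$. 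The mirror image of part $i)$ is: the ``free'' (merely continuous) side for convexity is the right-hand factor, but for concavity it is the \emph{left}-hand factor; you carried the part $i)$ pattern over unchanged. The conclusion is recoverable by swapping the two citations, keeping $\left\Vert R\right\Vert _{\Lambda _{Y}}$ (with no detour through $\left\Vert R\right\Vert$) in the precomposition step.
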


\bigskip

Let us note that if the identity operator in $X$ satisfies $i\in \mathcal{K}%
^{Y}\left( X,X\right) ,$ then 
\begin{equation*}
R=i\circ R\in \mathcal{K}^{Y}\left( E,X\right) ,\forall R\in \mathcal{L}%
\left( E,X\right)
\end{equation*}%
and consequently $\mathcal{L}\left( E,X\right) \subset \mathcal{K}^{Y}\left(
E,X\right) .$ Therefore, if $X$ is $Y$-convex then%
\begin{equation*}
\mathcal{K}^{Y}\left( E,X\right) =\mathcal{L}\left( E,X\right) .
\end{equation*}%
\indent Similarly, if $X$ is $Y$-concave, then 
\begin{equation*}
\mathcal{K}_{Y}\left( X,E\right) =\mathcal{L}\left( X,E\right) .
\end{equation*}

The following class of operators is considered in \cite[Lemma 2.57]{Okada}.

\begin{definition}
Let $X$ and $W$ be Banach lattices. We denote by $\Psi _{Y}\left( X,W\right) 
$ the space of continuous linear operators $T:X\rightarrow W$ such that 
\begin{equation}
\left\Vert \left( Tx_{1},...,Tx_{n}\right) \right\Vert _{Y}\leq \left\vert
T\left( \left\Vert \left( x_{1},...,x_{n}\right) \right\Vert _{Y}\right)
\right\vert ,\forall x_{1},...,x_{n}\in X,n\in 
%TCIMACRO{\U{2115} }%
%BeginExpansion
\mathbb{N}
%EndExpansion
.  \label{jjoo}
\end{equation}
\end{definition}

\bigskip

Observe that the above inequality is an order inequality between elements in 
$W$. Furtermore, we have that 
\begin{eqnarray}
\left\Vert \left( Tx_{1},...,Tx_{n}\right) \right\Vert _{W^{n},Y,\tau }
&=&\left\Vert \left\Vert \left( Tx_{1},...,Tx_{n}\right) \right\Vert
_{Y}\right\Vert _{W}\leq \left\Vert \left\vert T\left( \left\Vert \left(
x_{1},...,x_{n}\right) \right\Vert _{Y}\right) \right\vert \right\Vert _{W} 
\notag \\
&\leq &\left\Vert T\right\Vert \left\Vert \left( x_{1},...,x_{n}\right)
\right\Vert _{X^{n},Y,\tau },\forall T\in \Psi _{Y}\left( X,W\right) .
\label{dede}
\end{eqnarray}%
\indent Therefore $\Psi _{Y}\left( X,W\right) \subset \Lambda _{Y}\left(
X,W\right) .$

\begin{proposition}
$\mathcal{L}_{r}\left( X,W\right) \subset \Lambda _{Y}\left( X,W\right) .$
\end{proposition}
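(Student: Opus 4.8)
The plan is to reduce everything to positive operators. Since it was noted right after the definition of $\Lambda_{Y}(X,W)$ that this is a linear space, and since by definition every $T\in\mathcal{L}_{r}(X,W)$ is a difference $T=P_{1}-P_{2}$ of two positive operators, it suffices to prove that every positive operator $P:X\rightarrow W$ belongs to $\Lambda_{Y}(X,W)$. In fact I will establish the stronger membership $P\in\Psi_{Y}(X,W)$ and then invoke the inclusion $\Psi_{Y}(X,W)\subset\Lambda_{Y}(X,W)$ already obtained in (\ref{dede}).

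To check the defining inequality (\ref{jjoo}) for a positive $P$, I would use the order-theoretic description of $\left\Vert \cdot\right\Vert_{Y}$ supplied by Proposition \ref{lema69}. Fix $n\in\mathbb{N}$ and $x_{1},\dots,x_{n}\in X$. For every $(a_{1},\dots,a_{n})$ with $\left\Vert(a_{1},\dots,a_{n})\right\Vert_{Y^{\ast}}\leq 1$, Proposition \ref{lema69} gives the order inequality $\sum_{j=1}^{n}a_{j}x_{j}\leq\left\Vert(x_{1},\dots,x_{n})\right\Vert_{Y}$ in $X$. Applying $P$, which is monotone because it is positive and linear, and using linearity yields
\begin{equation*}
\sum_{j=1}^{n}a_{j}Px_{j}=P\Big(\sum_{j=1}^{n}a_{j}x_{j}\Big)\leq P\big(\left\Vert(x_{1},\dots,x_{n})\right\Vert_{Y}\big).
\end{equation*}
Thus $P(\left\Vert(x_{1},\dots,x_{n})\right\Vert_{Y})$ is an upper bound for the set whose least upper bound, by Proposition \ref{lema69} applied now to $(Px_{1},\dots,Px_{n})$, equals $\left\Vert(Px_{1},\dots,Px_{n})\right\Vert_{Y}$; hence $\left\Vert(Px_{1},\dots,Px_{n})\right\Vert_{Y}\leq P(\left\Vert(x_{1},\dots,x_{n})\right\Vert_{Y})$. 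Taking $a=0$ in Proposition \ref{lema69} shows $\left\Vert(x_{1},\dots,x_{n})\right\Vert_{Y}\geq 0$, so positivity of $P$ gives $P(\left\Vert(x_{1},\dots,x_{n})\right\Vert_{Y})\geq 0$ and therefore $P(\left\Vert(x_{1},\dots,x_{n})\right\Vert_{Y})=\left\vert P(\left\Vert(x_{1},\dots,x_{n})\right\Vert_{Y})\right\vert$. This is exactly (\ref{jjoo}), so $P\in\Psi_{Y}(X,W)$.

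Finally, writing an arbitrary $T\in\mathcal{L}_{r}(X,W)$ as $T=P_{1}-P_{2}$ with $P_{1},P_{2}$ positive, the previous step gives $P_{1},P_{2}\in\Psi_{Y}(X,W)\subset\Lambda_{Y}(X,W)$, whence $T\in\Lambda_{Y}(X,W)$ because $\Lambda_{Y}(X,W)$ is a vector space. I expect the only genuinely delicate point to be the first step: a positive operator need not be a lattice homomorphism, so the \emph{equality} in part $v)$ of Lemma \ref{lemalema} is unavailable here. The argument above sidesteps this by using only monotonicity of $P$ together with the supremum formula of Proposition \ref{lema69}, which automatically downgrades that equality to the one-sided estimate that is precisely what condition (\ref{jjoo}) demands.
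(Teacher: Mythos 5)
Your proposal is correct and follows essentially the same route as the paper: reduce to positive operators, use the supremum representation of Proposition \ref{lema69} together with monotonicity of $P$ to verify condition (\ref{jjoo}), and conclude via $\Psi_{Y}\left( X,W\right) \subset \Lambda _{Y}\left( X,W\right)$ and the linearity of $\Lambda _{Y}\left( X,W\right)$. The only difference is that you spell out two steps the paper leaves implicit, namely the nonnegativity of $\left\Vert \left( x_{1},...,x_{n}\right) \right\Vert _{Y}$ (via $a=0$) and the final passage from positive operators to differences of positive operators.
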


\begin{proof}
Let $P:X\rightarrow W$ be a positive operator. Fix $n\in 
%TCIMACRO{\U{2115} }%
%BeginExpansion
\mathbb{N}
%EndExpansion
$ and $x_{1},...,x_{n}\in X.$ By Proposition $\ref{lema69},$%
\begin{equation*}
\left\Vert \left( x_{1},...,x_{n}\right) \right\Vert _{Y}\geq
\sum_{j=1}^{n}a_{j}x_{j},\forall \left( a_{1},...,a_{n}\right) \in
B_{Y^{\ast }}
\end{equation*}%
and consequently%
\begin{equation*}
P\left( \left\Vert \left( x_{1},...,x_{n}\right) \right\Vert _{Y}\right)
\geq \sum_{j=1}^{n}a_{j}Px_{j},\forall \left( a_{1},...,a_{n}\right) \in
B_{Y^{\ast }}.
\end{equation*}%
\indent Therefore, by Proposition $\ref{lema69}$,%
\begin{equation*}
\left\Vert \left( Px_{1},...,Px_{n}\right) \right\Vert _{Y}\leq P\left(
\left\Vert \left( x_{1},...,x_{n}\right) \right\Vert _{Y}\right) =\left\vert
P\left( \left\Vert \left( x_{1},...,x_{n}\right) \right\Vert _{Y}\right)
\right\vert .
\end{equation*}%
\indent Thus $P\in \Psi _{Y}\left( X,W\right) \subset $\thinspace $\Lambda
_{Y}\left( X,W\right) $ and so $\mathcal{L}_{r}\left( X,W\right) \subset
\Lambda _{Y}\left( X,W\right) .$
\end{proof}

\bigskip

Next we will prove that $Y$-convexity and $Y$-concavity are preserved under 
\textit{lattice isomorphisms}, i.e., order preserving and bijective bounded
linear operators with continuous inverse.

\begin{corollary}
Let $H_{1}:X\rightarrow W$ and $H_{2}:W\rightarrow X$ be lattice
isomorphisms. Then, for each $T\in \mathcal{L}\left( E,X\right) ,$%
\begin{equation*}
H_{1}\circ T\in \mathcal{K}^{Y}\left( E,W\right) \iff T\in \mathcal{K}%
^{Y}\left( E,X\right)
\end{equation*}
and, for each $S\in \mathcal{L}\left( X,E\right) ,$ 
\begin{equation*}
S\circ H_{2}\in \mathcal{K}_{Y}\left( W,E\right) \iff S\in \mathcal{K}%
_{Y}\left( X,E\right)
\end{equation*}
\end{corollary}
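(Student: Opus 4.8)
The plan is to reduce both equivalences to Proposition \ref{propocorol} together with the inclusion $\mathcal{L}_r(X,W)\subset\Lambda_Y(X,W)$. The governing observation is that a lattice isomorphism is a positive operator and---crucially---so is its inverse: if $H$ is a linear bijection preserving $\vee$, then writing $u=Hx$ and $v=Hy$ gives $H(x\vee y)=Hx\vee Hy=u\vee v$, so $H^{-1}(u\vee v)=H^{-1}u\vee H^{-1}v$, i.e. $H^{-1}$ is again a lattice isomorphism; and any linear map preserving $\vee$ is positive, since $u\geq 0$ forces $H^{-1}u=H^{-1}(u\vee 0)=H^{-1}u\vee 0\geq 0$. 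Thus $H_1,H_1^{-1},H_2,H_2^{-1}$ are all positive, hence regular, hence members of the appropriate $\Lambda_Y$ spaces.

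For the convexity equivalence I would argue as follows. If $T\in\mathcal{K}^Y(E,X)$, then since $H_1\in\Lambda_Y(X,W)$, part $ii)$ of Proposition \ref{propocorol} gives $H_1\circ T\in\mathcal{K}^Y(E,W)$. Conversely, if $H_1\circ T\in\mathcal{K}^Y(E,W)$, then since $H_1^{-1}\in\Lambda_Y(W,X)$, the same part $ii)$ yields $H_1^{-1}\circ(H_1\circ T)\in\mathcal{K}^Y(E,X)$; as $H_1^{-1}\circ H_1$ is the identity on $X$, this composition is exactly $T$.

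The concavity equivalence would be handled symmetrically, now invoking part $iv)$ of Proposition \ref{propocorol}. If $S\in\mathcal{K}_Y(X,E)$, then since $H_2\in\Lambda_Y(W,X)$, part $iv)$ gives $S\circ H_2\in\mathcal{K}_Y(W,E)$. Conversely, if $S\circ H_2\in\mathcal{K}_Y(W,E)$, then since $H_2^{-1}\in\Lambda_Y(X,W)$, part $iv)$ produces $(S\circ H_2)\circ H_2^{-1}\in\mathcal{K}_Y(X,E)$, and $(S\circ H_2)\circ H_2^{-1}=S\circ(H_2\circ H_2^{-1})=S$.

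Since each direction is a single application of an already proven composition result, there is no computational obstacle; the entire content sits in the setup. The one point I would isolate and state carefully is that the inverse of a lattice isomorphism is again order-preserving. This genuinely uses preservation of $\vee$ and not merely positivity of a bijection: a positive linear bijection with bounded inverse need not have a positive inverse (for instance $(a,b)\mapsto(a+b,b)$ on $\mathbb{R}^2$ with the coordinatewise order is positive and invertible, but its inverse sends $(0,1)$ to $(-1,1)$). Hence the argument relies on the full lattice-isomorphism hypothesis, which is exactly what is available.
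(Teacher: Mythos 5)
Your proof is correct and follows essentially the same route as the paper, whose one-line argument is precisely that $H_i$ and $H_i^{-1}$ lie in the appropriate $\Lambda_Y$ spaces (via positivity and $\mathcal{L}_r\subset\Lambda_Y$) so that parts $ii)$ and $iv)$ of Proposition \ref{propocorol} apply in each direction. Your explicit verification that the inverse of a lattice isomorphism is again order preserving, with the counterexample showing mere positivity of a bijection would not suffice, is exactly the content the paper leaves implicit.
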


\begin{proof}
Since $H_{i},H_{i}^{-1}\in \Lambda _{Y}\left( X,W\right) $ for $i=1,2,$ the
result follows by Lemma \ref{propocorol}.
\end{proof}

\begin{corollary}
Let $H:X\rightarrow W$ be a lattice isomorphism. Then $X$ is $Y$-convex if
and only if $W$ is $Y$-convex and $X$ is $Y$-concave if and only if $W$ is $%
Y $-concave.
\end{corollary}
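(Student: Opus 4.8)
The plan is to reduce the statement to the behaviour of the identity operator. By the remark preceding the statement, saying that $X$ is $Y$-convex means exactly that the identity $i_{X}\in\mathcal{K}^{Y}\left(X,X\right)$, and saying that $X$ is $Y$-concave means that $i_{X}\in\mathcal{K}_{Y}\left(X,X\right)$; the same applies to $W$ with $i_{W}$. Thus I only need to transport membership of the identity in these operator spaces along the lattice isomorphism $H$, so that the whole argument becomes an application of the composition results already proved.

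First I would observe that $H\in\Lambda_{Y}\left(X,W\right)$ and $H^{-1}\in\Lambda_{Y}\left(W,X\right)$. Indeed, a lattice isomorphism is order preserving, hence positive, and so is its inverse; therefore $H$ and $H^{-1}$ are regular and belong to $\Lambda_{Y}$ by the inclusion $\mathcal{L}_{r}\subset\Lambda_{Y}$ established above (this is precisely the fact already used in the previous corollary). The second ingredient is the pair of factorizations $i_{W}=H\circ i_{X}\circ H^{-1}$ and $i_{X}=H^{-1}\circ i_{W}\circ H$.

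For the convex equivalence, suppose $X$ is $Y$-convex, i.e. $i_{X}\in\mathcal{K}^{Y}\left(X,X\right)$. Applying part $i)$ of Proposition \ref{propocorol} with the continuous operator $H^{-1}$ gives $i_{X}\circ H^{-1}\in\mathcal{K}^{Y}\left(W,X\right)$; then part $ii)$ with $H\in\Lambda_{Y}\left(X,W\right)$ gives $i_{W}=H\circ\left(i_{X}\circ H^{-1}\right)\in\mathcal{K}^{Y}\left(W,W\right)$, so $W$ is $Y$-convex. The reverse implication is the same computation applied to $i_{X}=H^{-1}\circ i_{W}\circ H$. For the concave equivalence I would repeat this verbatim, using parts $iii)$ and $iv)$ of Proposition \ref{propocorol} in place of $i)$ and $ii)$ and $\mathcal{K}_{Y}$ in place of $\mathcal{K}^{Y}$.

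I do not expect any genuine obstacle: the argument is a two-step substitution into Proposition \ref{propocorol} (or, equivalently, a single application of the preceding two-sided composition corollary to the sandwiched identity). The only points that require care are the translation of the phrase ``the lattice $X$ is $Y$-convex/concave'' into ``the identity operator on $X$ is $Y$-convex/concave'', and the verification that both a lattice isomorphism and its inverse are regular, so that they lie in $\Lambda_{Y}$ and the composition results apply.
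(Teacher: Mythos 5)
Your proposal is correct and follows essentially the same route as the paper: both translate "$X$ is $Y$-convex/concave" into membership of $i_X$ in $\mathcal{K}^Y(X,X)$ or $\mathcal{K}_Y(X,X)$, factor the identities as $i_W=H\circ i_X\circ H^{-1}$ and $i_X=H^{-1}\circ i_W\circ H$, and invoke the composition results of Proposition \ref{propocorol} together with the fact that $H,H^{-1}\in\Lambda_Y$ (the paper routes this through its preceding corollary, which is just the same two-step application of \ref{propocorol}). The only cosmetic difference is that the paper writes the argument as a chain of equivalences while you prove one implication and appeal to symmetry.
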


\begin{proof}
Since $i_{X}=H^{-1}\circ i_{W}\circ H,$ by the above corollary and Lemma \ref%
{propocorol} it follows that%
\begin{align*}
i_{X}\in \mathcal{K}^{Y}\left( X,X\right) & \iff H\circ i_{X}=i_{W}\circ
H\in \mathcal{K}^{Y}\left( X,W\right) \\
& \iff i_{W}\circ H\circ H^{-1}=i_{W}\in \mathcal{K}^{Y}\left( W,W\right)
\end{align*}%
and%
\begin{align*}
i_{X}\in \mathcal{K}_{Y}\left( X,X\right) & \iff i_{X}\circ
H^{-1}=H^{-1}\circ i_{W}\in \mathcal{K}_{Y}\left( W,X\right) \\
& \iff H\circ H^{-1}\circ i_{W}=i_{W}\in \mathcal{K}_{Y}\left( W,W\right) .
\end{align*}
\end{proof}

\bigskip

Observe that, from the above results we obtain the following containments: 
\begin{equation*}
\text{Lattice isomorphisms}\subset \mathcal{L}_{r}\left( X,W\right) \subset
\Lambda _{Y}\left( X,W\right) \subset \mathcal{L}\left( X,W\right) .
\end{equation*}%
\linebreak

\section{$Y$-summability}

In this section we present a generalization of the classical space of 
\textit{absolutely }$p$\textit{-summing operators }$\Pi _{p}$ (\cite{Diestel}%
, \cite[Ch. 1.d]{Linden}, \cite[p. 79]{Okada}, \cite{A class of summing}).
Recall $E,F$ are Banach spaces, $X,W,Z$ are Banach lattices and $Y$ is a
Banach sequence lattice.

\bigskip

For each $n\in 
%TCIMACRO{\U{2115} }%
%BeginExpansion
\mathbb{N}
%EndExpansion
$ we now define the function $\left\Vert \cdot \right\Vert
_{w,X^{n},Y}:X^{n}\rightarrow 
%TCIMACRO{\U{211d} }%
%BeginExpansion
\mathbb{R}
%EndExpansion
$ by%
\begin{equation*}
\left\Vert \left( x_{1},...,x_{n}\right) \right\Vert
_{w,X^{n},Y}:=\sup_{x^{\ast }\in B_{X^{\ast }}}\left\{ \left\Vert \left(
\left\langle x_{1},x^{\ast }\right\rangle ,...,\left\langle x_{n},x^{\ast
}\right\rangle \right) \right\Vert _{Y}\right\} ,\forall \left(
x_{1},...,x_{n}\right) \in X^{n}.
\end{equation*}%
\indent Clearly $\left\Vert \cdot \right\Vert _{w,X^{n},Y}$ is a norm on $%
X^{n}.$

\begin{definition}
Let $X$ be a Banach lattice, $Y$ a Banach sequence lattice and $E$ a Banach
space. We say that an operator $S:X\rightarrow E$ is absolutely $Y$\textit{%
-summing} if there exists a constant $C>0$ such that, for each $n\in 
%TCIMACRO{\U{2115} }%
%BeginExpansion
\mathbb{N}
%EndExpansion
$ and $x_{1},...,x_{n}\in X,$%
\begin{equation}
\left\Vert \left( Sx_{1},...,Sx_{n}\right) \right\Vert _{E^{n},Y}\leq
C\left\Vert \left( x_{1},...,x_{n}\right) \right\Vert _{w,X^{n},Y}.
\label{dessss}
\end{equation}%
\indent The space of absolutely $Y$\textit{-summing operators from }$X$ into 
$E$ will be denoted by $\Pi _{Y}\left( X,E\right) $ and the smallest
constant satisfying (\ref{dessss}) by $\left\Vert S\right\Vert _{\Pi
_{Y}\left( X,E\right) }$ or simply by $\left\Vert S\right\Vert _{\Pi _{Y}}.$
\end{definition}

\bigskip

Clearly $\Pi _{Y}\left( X,E\right) $ is a linear space and, since $%
\left\Vert \cdot \right\Vert _{E^{n},Y}$ is a norm, it follows that $%
\left\Vert \cdot \right\Vert _{\Pi _{Y}\left( X,E\right) }$ is a norm on $%
\Pi _{Y}\left( X,E\right) .$

\begin{lemma}
$\Pi _{Y}\left( X,E\right) $ is continuously included in $\mathcal{K}%
_{Y}\left( X,E\right) \label{propointro}$
\end{lemma}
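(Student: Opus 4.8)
The plan is to reduce the continuous inclusion to a single pointwise comparison between the weak norm $\left\Vert \cdot \right\Vert _{w,X^{n},Y}$ and the norm $\left\Vert \cdot \right\Vert _{X^{n},Y,\tau }$, after which the summing estimate immediately yields the concavity estimate. Concretely, I will establish that for every $n\in \mathbb{N}$ and every $x=\left( x_{1},...,x_{n}\right) \in X^{n}$,
\[
\left\Vert \left( x_{1},...,x_{n}\right) \right\Vert _{w,X^{n},Y}\leq \left\Vert \left( x_{1},...,x_{n}\right) \right\Vert _{X^{n},Y,\tau }.\qquad (\ast )
\]
Granting $(\ast )$, if $S\in \Pi _{Y}\left( X,E\right) $ then for all $x_{1},...,x_{n}\in X$,
\[
\left\Vert \left( Sx_{1},...,Sx_{n}\right) \right\Vert _{E^{n},Y}\leq \left\Vert S\right\Vert _{\Pi _{Y}}\left\Vert \left( x_{1},...,x_{n}\right) \right\Vert _{w,X^{n},Y}\leq \left\Vert S\right\Vert _{\Pi _{Y}}\left\Vert \left( x_{1},...,x_{n}\right) \right\Vert _{X^{n},Y,\tau },
\]
which is exactly the $Y$-concavity inequality (\ref{20074}) with constant $\left\Vert S\right\Vert _{\Pi _{Y}}$. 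Hence $S\in \mathcal{K}_{Y}\left( X,E\right) $ and $\left\Vert S\right\Vert _{\mathcal{K}_{Y}}\leq \left\Vert S\right\Vert _{\Pi _{Y}}$, proving the continuous inclusion.

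To prove $(\ast )$ I will first reformulate the weak norm using the scalar duality identity (\ref{7nov}). For a fixed $x^{\ast }\in B_{X^{\ast }}$ the entries $\left\langle x_{j},x^{\ast }\right\rangle $ are real numbers, so applying (\ref{7nov}) and linearity of the pairing,
\[
\left\Vert \left( \left\langle x_{1},x^{\ast }\right\rangle ,...,\left\langle x_{n},x^{\ast }\right\rangle \right) \right\Vert _{Y}=\sup_{\left\Vert a\right\Vert _{Y^{\ast }}\leq 1}\left\vert \left\langle \sum_{j=1}^{n}a_{j}x_{j},x^{\ast }\right\rangle \right\vert .
\]
Taking the supremum over $x^{\ast }\in B_{X^{\ast }}$ and exchanging the two suprema (both are over the same quantity, so the exchange is valid) yields, via the Hahn--Banach identity $\sup_{x^{\ast }\in B_{X^{\ast }}}\left\vert \left\langle z,x^{\ast }\right\rangle \right\vert =\left\Vert z\right\Vert _{X}$,
\[
\left\Vert \left( x_{1},...,x_{n}\right) \right\Vert _{w,X^{n},Y}=\sup_{\left\Vert a\right\Vert _{Y^{\ast }}\leq 1}\left\Vert \sum_{j=1}^{n}a_{j}x_{j}\right\Vert _{X}.
\]

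The remaining and central step is to bound each term $\left\Vert \sum_{j}a_{j}x_{j}\right\Vert _{X}$ by $\left\Vert x\right\Vert _{X^{n},Y,\tau }=\left\Vert \left\Vert \left( x_{1},...,x_{n}\right) \right\Vert _{Y}\right\Vert _{X}$. Here I will invoke Proposition \ref{lema69}, which represents the lattice element $\left\Vert \left( x_{1},...,x_{n}\right) \right\Vert _{Y}\in X$ as the order supremum $\sup \{ \sum_{j}a_{j}x_{j}:\left\Vert a\right\Vert _{Y^{\ast }}\leq 1\} $. This gives $\sum_{j}a_{j}x_{j}\leq \left\Vert \left( x_{1},...,x_{n}\right) \right\Vert _{Y}$ for each admissible $a$; since $B_{Y^{\ast }}$ is symmetric, applying the same bound to $-a$ gives $-\sum_{j}a_{j}x_{j}\leq \left\Vert \left( x_{1},...,x_{n}\right) \right\Vert _{Y}$, whence $\left\vert \sum_{j}a_{j}x_{j}\right\vert \leq \left\Vert \left( x_{1},...,x_{n}\right) \right\Vert _{Y}$ in $X$. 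Because $\left\Vert \cdot \right\Vert _{X}$ is a lattice norm, this yields $\left\Vert \sum_{j}a_{j}x_{j}\right\Vert _{X}\leq \left\Vert x\right\Vert _{X^{n},Y,\tau }$, and taking the supremum over $a\in B_{Y^{\ast }}$ establishes $(\ast )$. The only genuine insight required is the duality reformulation of the weak norm together with the symmetrization $a\mapsto -a$, which upgrades the one-sided order bound from Proposition \ref{lema69} into control of the absolute value; I therefore expect no substantial obstacle beyond this bookkeeping.
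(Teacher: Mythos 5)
Your proof is correct, and it reaches the same constant ($\left\Vert S\right\Vert _{\mathcal{K}_{Y}}\leq \left\Vert S\right\Vert _{\Pi _{Y}}$), but by a genuinely different route than the paper. The paper fixes $x^{\ast }\in B_{X^{\ast }}$, passes to moduli via $\left\vert \left\langle x_{j},x^{\ast }\right\rangle \right\vert \leq \left\langle \left\vert x_{j}\right\vert ,\left\vert x^{\ast }\right\vert \right\rangle $, and then invokes the Section 5 machinery: the positive functional $\left\vert x^{\ast }\right\vert $ lies in $\Psi _{Y}\left( X,\mathbb{R}\right) ,$ so that $\left\Vert \left( \left\langle \left\vert x_{1}\right\vert ,\left\vert x^{\ast }\right\vert \right\rangle ,...,\left\langle \left\vert x_{n}\right\vert ,\left\vert x^{\ast }\right\vert \right\rangle \right) \right\Vert _{Y}\leq \left\vert x^{\ast }\right\vert \left( \left\Vert \left( \left\vert x_{1}\right\vert ,...,\left\vert x_{n}\right\vert \right) \right\Vert _{Y}\right) ,$ and finally it uses $\sup_{x^{\ast }\in B_{X^{\ast }}}\left\vert x^{\ast }\right\vert \left( z\right) =\left\Vert z\right\Vert _{X}$ for $z\geq 0$ together with $\left\Vert \left( \left\vert x_{1}\right\vert ,...,\left\vert x_{n}\right\vert \right) \right\Vert _{Y}=\left\Vert \left( x_{1},...,x_{n}\right) \right\Vert _{Y}$. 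You instead isolate the intermediate norm inequality $(\ast )$, namely $\left\Vert \cdot \right\Vert _{w,X^{n},Y}\leq \left\Vert \cdot \right\Vert _{X^{n},Y,\tau }$, by rewriting the weak norm through finite-dimensional duality (\ref{7nov}), a swap of suprema and Hahn--Banach as $\sup_{a\in B_{Y^{\ast }}}\left\Vert \sum_{j}a_{j}x_{j}\right\Vert _{X}$, and then upgrading the one-sided order bound of Proposition \ref{lema69} to $\left\vert \sum_{j}a_{j}x_{j}\right\vert \leq \left\Vert \left( x_{1},...,x_{n}\right) \right\Vert _{Y}$ via the symmetry $a\mapsto -a$ of $B_{Y^{\ast }}$; all these steps are valid. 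Both arguments ultimately rest on Proposition \ref{lema69} (the paper indirectly, through the proof that positive operators lie in $\Psi _{Y}$; you directly), but your route bypasses the lattice structure of $X^{\ast }$ entirely (no moduli of functionals, no appeal to $\Psi _{Y}$) and yields the reusable, independently interesting comparison $(\ast )$ between the weak $Y$-norm and the Krivine $\tau $-norm, whereas the paper's route displays how its $\Psi _{Y}$/regular-operator machinery interlocks with summability.
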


\begin{proof}
Let $S\in \Pi _{Y}\left( X,E\right) .$ Observe that for each $x^{\ast }\in
X^{\ast }$ the functional $\left\vert x^{\ast }\right\vert $ is positive.
Thus $\left\vert x^{\ast }\right\vert \in \Psi _{Y}\left( X,%
%TCIMACRO{\U{211d} }%
%BeginExpansion
\mathbb{R}
%EndExpansion
\right) $ and satisfies inequality $\left( \ref{positive}\right) .$ Then for
each $n\in 
%TCIMACRO{\U{2115} }%
%BeginExpansion
\mathbb{N}
%EndExpansion
$ and $x_{1},...,x_{n}\in X,$%
\begin{eqnarray*}
\left\Vert \left( Sx_{1},...,Sx_{n}\right) \right\Vert _{E^{n},Y} &\leq
&\left\Vert S\right\Vert _{\Pi _{Y}}\sup_{x^{\ast }\in B_{X^{\ast }}}\left\{
\left\Vert \left( \left\langle x_{1},x^{\ast }\right\rangle
,...,\left\langle x_{n},x^{\ast }\right\rangle \right) \right\Vert
_{Y}\right\} \\
&\leq &\left\Vert S\right\Vert _{\Pi _{Y}}\sup_{x^{\ast }\in B_{X^{\ast
}}}\left\{ \left\Vert \left( \left\langle \left\vert x_{1}\right\vert
,\left\vert x^{\ast }\right\vert \right\rangle ,...,\left\langle \left\vert
x_{n}\right\vert ,\left\vert x^{\ast }\right\vert \right\rangle \right)
\right\Vert _{Y}\right\} \\
&\leq &\left\Vert S\right\Vert _{\Pi _{Y}}\sup_{x^{\ast }\in B_{X^{\ast
}}}\left\{ \left\vert x^{\ast }\right\vert \left( \left\Vert \left(
\left\vert x_{1}\right\vert ,...,\left\vert x_{n}\right\vert \right)
\right\Vert _{Y}\right) \right\} \\
&=&\left\Vert S\right\Vert _{\Pi _{Y}}\left\Vert \left\Vert \left(
\left\vert x_{1}\right\vert ,...,\left\vert x_{n}\right\vert \right)
\right\Vert _{Y}\right\Vert _{X} \\
&=&\left\Vert S\right\Vert _{\Pi _{Y}}\left\Vert \left(
x_{1},...,x_{n}\right) \right\Vert _{X^{n},Y,\tau }.
\end{eqnarray*}%
\indent Therefore $S\in \mathcal{K}_{Y}\left( X,E\right) $ and 
\begin{equation}
\left\Vert S\right\Vert _{\mathcal{K}_{Y}}\leq \left\Vert S\right\Vert _{\Pi
_{Y}}.  \label{a}
\end{equation}
\end{proof}

\begin{theorem}
$\Pi _{Y}\left( X,E\right) $ is a Banach space
\end{theorem}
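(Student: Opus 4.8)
The plan is to establish completeness through the standard criterion that a normed space is complete if and only if every absolutely convergent series is convergent. This is precisely the strategy already employed for $\mathcal{K}^{Y}\left( E,X\right) $ in Theorem \ref{tma1405} and for $\Lambda _{Y}\left( X,W\right) $, so the present argument will run in close parallel, the only genuinely new ingredient being the defining inequality (\ref{dessss}).

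First I would take a sequence $\left\{ S_{m}\right\} _{m=1}^{\infty }\subset \Pi _{Y}\left( X,E\right) $ with $\sum_{m=1}^{\infty }\left\Vert S_{m}\right\Vert _{\Pi _{Y}}=:M<\infty .$ By Lemma \ref{propointro} together with part $ii)$ of Lemma \ref{propoYconvimpliescont} one has $\left\Vert S_{m}\right\Vert \leq \left\Vert S_{m}\right\Vert _{\mathcal{K}_{Y}}\leq \left\Vert S_{m}\right\Vert _{\Pi _{Y}}$, whence $\sum_{m}\left\Vert S_{m}\right\Vert <\infty .$ Since $E$ is a Banach space, $\mathcal{L}\left( X,E\right) $ is complete, so $\sum_{m=1}^{\infty }S_{m}$ converges in operator norm to some $S\in \mathcal{L}\left( X,E\right) $; in particular $Sx=\lim_{k\rightarrow \infty }\sum_{m=1}^{k}S_{m}x$ in $E$ for every $x\in X.$

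Next I would verify that $S\in \Pi _{Y}\left( X,E\right) $ with $\left\Vert S\right\Vert _{\Pi _{Y}}\leq M.$ Fixing $n\in \mathbb{N}$ and $x_{1},...,x_{n}\in X$, and using that $\left\Vert \cdot \right\Vert _{E^{n},Y}$ is a norm equivalent to the product norm $\left\Vert \cdot \right\Vert _{E^{n},1}$ (hence continuous), I may pass the limit through it and then invoke its triangle inequality together with (\ref{dessss}) applied to each $S_{m}$:
\begin{align*}
\left\Vert \left( Sx_{1},...,Sx_{n}\right) \right\Vert _{E^{n},Y}
&=\lim_{k\rightarrow \infty }\Big\Vert \Big( \sum_{m=1}^{k}S_{m}x_{1},...,\sum_{m=1}^{k}S_{m}x_{n}\Big) \Big\Vert _{E^{n},Y} \\
&\leq \sum_{m=1}^{\infty }\left\Vert \left( S_{m}x_{1},...,S_{m}x_{n}\right) \right\Vert _{E^{n},Y} \\
&\leq \sum_{m=1}^{\infty }\left\Vert S_{m}\right\Vert _{\Pi _{Y}}\left\Vert \left( x_{1},...,x_{n}\right) \right\Vert _{w,X^{n},Y}=M\left\Vert \left( x_{1},...,x_{n}\right) \right\Vert _{w,X^{n},Y}.
\end{align*}
This shows $S\in \Pi _{Y}\left( X,E\right) $ and $\left\Vert S\right\Vert _{\Pi _{Y}}\leq M.$

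Finally, to obtain convergence of the partial sums $T_{k}:=\sum_{m=1}^{k}S_{m}$ to $S$ in the $\Pi _{Y}$-norm, I would apply exactly the same estimate to the tail series $S-T_{k}=\sum_{m=k+1}^{\infty }S_{m}$, giving $\left\Vert S-T_{k}\right\Vert _{\Pi _{Y}}\leq \sum_{m=k+1}^{\infty }\left\Vert S_{m}\right\Vert _{\Pi _{Y}}\rightarrow 0$ as $k\rightarrow \infty .$ I do not expect a real obstacle: once the continuous inclusions of Lemma \ref{propointro} and Lemma \ref{propoYconvimpliescont} are in hand, everything is routine. The single point deserving care is the interchange of the limit with $\left\Vert \cdot \right\Vert _{E^{n},Y}$, which is justified by the continuity of this norm arising from its equivalence with $\left\Vert \cdot \right\Vert _{E^{n},1}.$
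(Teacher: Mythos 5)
Your proof is correct and follows essentially the same route as the paper: the absolutely-convergent-series criterion for completeness, with the identical pass-the-limit-through-$\left\Vert \cdot \right\Vert _{E^{n},Y}$ plus triangle-inequality estimate against (\ref{dessss}). The only (harmless) differences are that the paper obtains the limit operator from the completeness of $\mathcal{K}_{Y}\left( X,E\right) $ (Theorem \ref{tma1405}) via the inclusion of Lemma \ref{propointro}, whereas you use the completeness of $\mathcal{L}\left( X,E\right) $, and that you spell out the tail estimate $\left\Vert S-T_{k}\right\Vert _{\Pi _{Y}}\rightarrow 0$, which the paper leaves implicit.
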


\begin{proof}
Let $\left\{ S_{m}\right\} _{m=1}^{\infty }\subset $ $\Pi _{Y}\left(
X,E\right) $ be such that $\sum_{m=1}^{\infty }\left\Vert S_{m}\right\Vert
_{\Pi _{Y}}<\infty .$ By inequality $\left( \ref{a}\right) ,$ 
\begin{equation*}
\sum_{m=1}^{\infty }\left\Vert S_{m}\right\Vert _{\mathcal{K}_{Y}}\leq
\sum_{m=1}^{\infty }\left\Vert S_{m}\right\Vert _{\Pi _{Y}}<\infty
\end{equation*}%
and consequently $\sum_{m=1}^{\infty }S_{m}$ converges in $\mathcal{K}%
_{Y}\left( X,E\right) .$ Then, for each $n\in 
%TCIMACRO{\U{2115} }%
%BeginExpansion
\mathbb{N}
%EndExpansion
$ and $x_{1},...,x_{n}\in X,$%
\begin{eqnarray*}
\left\Vert \left( \sum_{m=1}^{\infty }S_{m}x_{1},...,\sum_{m=1}^{\infty
}S_{m}x_{n}\right) \right\Vert _{E^{n},Y}\hspace{-0.5cm} &=&\lim_{k%
\rightarrow \infty }\left\Vert \left(
\sum_{m=1}^{k}S_{m}x_{1},...,\sum_{m=1}^{k}S_{m}x_{n}\right) \right\Vert
_{E^{n},Y} \\
&\leq &\lim_{k\rightarrow \infty }\sum_{m=1}^{k}\left\Vert \left(
S_{m}x_{1},...,S_{m}x_{n}\right) \right\Vert _{E^{n},Y} \\
&\leq &\lim_{k\rightarrow \infty }\left( \sum_{m=1}^{k}\left\Vert
S_{m}\right\Vert _{\Pi _{Y}}\left\Vert \left( x_{1},...,x_{n}\right)
\right\Vert _{w,X^{n},Y}\right) \\
&=&\left( \sum_{m=1}^{\infty }\left\Vert S_{m}\right\Vert _{\Pi _{Y}}\right)
\left\Vert \left( x_{1},...,x_{n}\right) \right\Vert _{w,X^{n},Y}.
\end{eqnarray*}%
\indent Therefore $\sum_{m=1}^{\infty }S_{m}\in \Pi _{Y}\left( X,E\right) $
and $\Pi _{Y}\left( X,E\right) $ is complete.
\end{proof}

\begin{proposition}
Every finite rank operator $S:X\rightarrow E$ is absolutely $Y$-summing.%
\label{0709}
\end{proposition}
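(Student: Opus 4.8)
The plan is to reduce the general finite-rank case to the rank $1$ case, exactly as was done for $Y$-convexity in the earlier proposition. Recall a finite rank operator $S:X\rightarrow E$ can be written as a linear combination of rank $1$ operators, and since $\Pi_{Y}(X,E)$ is a linear space it suffices to show that every rank $1$ operator is absolutely $Y$-summing. So first I would fix a rank $1$ operator $S(x)=\langle x,x^{\ast}\rangle\, w$ for some $x^{\ast}\in X^{\ast}$ and $w\in E$, and estimate $\left\Vert (Sx_{1},...,Sx_{n})\right\Vert_{E^{n},Y}$ directly.

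The key computation is to unwind the left-hand side using homogeneity. Since $Sx_{j}=\langle x_{j},x^{\ast}\rangle\, w$, we have $\left\Vert Sx_{j}\right\Vert_{E}=\left\vert\langle x_{j},x^{\ast}\rangle\right\vert\,\left\Vert w\right\Vert_{E}$, so by definition of $\left\Vert\cdot\right\Vert_{E^{n},Y}$ in $\left( \ref{norma_X^n_Y}\right) $ together with part $i)$ of Lemma \ref{lemalema} (homogeneity of $\left\Vert\cdot\right\Vert_{\mathbb{R}^{n},Y}$),
\begin{equation*}
\left\Vert \left( Sx_{1},...,Sx_{n}\right) \right\Vert _{E^{n},Y}=\left\Vert w\right\Vert _{E}\left\Vert \left( \left\langle x_{1},x^{\ast }\right\rangle ,...,\left\langle x_{n},x^{\ast }\right\rangle \right) \right\Vert _{\mathbb{R}^{n},Y}.
\end{equation*}
Next I would bound the right-hand side by the weak norm. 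If $x^{\ast}=0$ the inequality is trivial, so assume $x^{\ast}\neq 0$ and write $y^{\ast}:=x^{\ast}/\left\Vert x^{\ast}\right\Vert\in B_{X^{\ast}}$. Then by homogeneity again and the definition of $\left\Vert\cdot\right\Vert_{w,X^{n},Y}$,
\begin{equation*}
\left\Vert \left( \left\langle x_{1},x^{\ast }\right\rangle ,...,\left\langle x_{n},x^{\ast }\right\rangle \right) \right\Vert _{\mathbb{R}^{n},Y}=\left\Vert x^{\ast }\right\Vert \left\Vert \left( \left\langle x_{1},y^{\ast }\right\rangle ,...,\left\langle x_{n},y^{\ast }\right\rangle \right) \right\Vert _{Y}\leq \left\Vert x^{\ast }\right\Vert \left\Vert \left( x_{1},...,x_{n}\right) \right\Vert _{w,X^{n},Y},
\end{equation*}
since $y^{\ast}$ is one of the functionals over which the supremum defining the weak norm is taken. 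Combining the two displays gives $\left\Vert (Sx_{1},...,Sx_{n})\right\Vert_{E^{n},Y}\leq \left\Vert w\right\Vert_{E}\left\Vert x^{\ast}\right\Vert\left\Vert (x_{1},...,x_{n})\right\Vert_{w,X^{n},Y}$, which is the required inequality $\left( \ref{dessss}\right) $ with constant $C=\left\Vert w\right\Vert_{E}\left\Vert x^{\ast}\right\Vert$, so $S$ is absolutely $Y$-summing.

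I do not expect a genuine obstacle here; the argument is a direct homogeneity estimate and the work is entirely routine. The only point requiring a little care is the bookkeeping between the $\left\Vert\cdot\right\Vert_{\mathbb{R}^{n},Y}$ norm on scalar vectors and the two $X^{n}$-norms $\left\Vert\cdot\right\Vert_{E^{n},Y}$ and $\left\Vert\cdot\right\Vert_{w,X^{n},Y}$, making sure the scalar quantities $\left\langle x_{j},x^{\ast}\rangle$ are treated via the genuine norm on $\mathbb{R}^{n}$ rather than through Krivine's calculus, which only matters on the lattice side. Once the rank $1$ case is settled, the passage to general finite rank follows immediately from linearity of $\Pi_{Y}(X,E)$, as remarked in the analogous $Y$-convexity proposition.
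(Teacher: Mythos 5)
Your proposal is correct and follows essentially the same route as the paper: reduce to rank $1$ operators by linearity of $\Pi_{Y}(X,E)$, factor out $\left\Vert w\right\Vert _{E}$ from $\left\Vert \left( Sx_{1},...,Sx_{n}\right) \right\Vert _{E^{n},Y}$, and bound the resulting scalar expression by $\left\Vert x^{\ast }\right\Vert \left\Vert \left( x_{1},...,x_{n}\right) \right\Vert _{w,X^{n},Y}$. The only difference is presentational: you make explicit the normalization $y^{\ast }=x^{\ast }/\left\Vert x^{\ast }\right\Vert$ and the trivial case $x^{\ast }=0$, which the paper absorbs into a single inequality.
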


\begin{proof}
Let $S:X\rightarrow E$ a rank $1$ operator$.$ Take $\varphi \in X^{\ast }$
and $w\in E$ such that $S\left( x\right) =\left\langle x,\varphi
\right\rangle w,x\in X.$ Then, for every $n\in 
%TCIMACRO{\U{2115} }%
%BeginExpansion
\mathbb{N}
%EndExpansion
$ and $x_{1},...,x_{n}\in X,$%
\begin{eqnarray*}
\left\Vert \left( Sx_{1},...,Sx_{n}\right) \right\Vert _{E^{n},Y}
&=&\left\Vert \left( \left\Vert \left\langle x_{1},\varphi \right\rangle
w\right\Vert _{E},...,\left\Vert \left\langle x_{n},\varphi \right\rangle
w\right\Vert _{E}\right) \right\Vert _{Y} \\
&& \\
&=&\left\Vert w\right\Vert _{E}\left\Vert \left( \left\langle x_{1},\varphi
\right\rangle ,...,\left\langle x_{n},\varphi \right\rangle \right)
\right\Vert _{Y} \\
&& \\
&\leq &\left\Vert w\right\Vert _{E}\left\Vert \varphi \right\Vert \sup
\left\{ \left\Vert \left( \left\langle x_{1},x^{\ast }\right\rangle
,...,\left\langle x_{n},x^{\ast }\right\rangle \right) \right\Vert
_{Y}:x^{\ast }\in B_{X^{\ast }}\right\} .
\end{eqnarray*}%
\indent So $S\in \Pi _{Y}\left( X,E\right) .$ The result follows since every
finite rank operator can be expressed as a linear combination of rank $1$
operators.
\end{proof}

\bigskip

By Lemma $\ref{propointro}$ and Proposition \ref{0709} we have the following
corollary.

\begin{corollary}
Let $X$ be a Banach lattice and $E$ a Banach space$.$ Then every finite rank
operator $S:X\rightarrow E$ is $Y$-concave.
\end{corollary}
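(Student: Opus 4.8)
The plan is to obtain this corollary as a direct chaining of the two immediately preceding results, with no new computation required. Given a finite rank operator $S:X\rightarrow E$, the first step is to invoke Proposition \ref{0709}, which already establishes that every such $S$ belongs to $\Pi_Y\left(X,E\right)$, the space of absolutely $Y$-summing operators. This is the substantive input, and it was proved by reducing to rank $1$ operators $S\left(x\right)=\left\langle x,\varphi\right\rangle w$ and using the homogeneity of $\left\Vert\cdot\right\Vert_Y$ together with the definition of $\left\Vert\cdot\right\Vert_{w,X^n,Y}$.

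The second and final step is to apply Lemma \ref{propointro}, which asserts the continuous inclusion $\Pi_Y\left(X,E\right)\subset\mathcal{K}_Y\left(X,E\right)$. Since $S\in\Pi_Y\left(X,E\right)$ by the previous step, it follows that $S\in\mathcal{K}_Y\left(X,E\right)$; that is, $S$ is $Y$-concave. If one wishes to record a quantitative bound as well, inequality $\left(\ref{a}\right)$ in the proof of Lemma \ref{propointro} gives $\left\Vert S\right\Vert_{\mathcal{K}_Y}\leq\left\Vert S\right\Vert_{\Pi_Y}$, so the $Y$-concavity constant of $S$ is controlled by its absolutely $Y$-summing norm, although this refinement is not needed for the qualitative assertion.

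I do not anticipate any genuine obstacle here, since the corollary is a purely formal consequence of the containment $\Pi_Y\left(X,E\right)\subset\mathcal{K}_Y\left(X,E\right)$ applied to a class of operators already shown to lie in $\Pi_Y\left(X,E\right)$. The only point worth stating explicitly is that the argument covers all finite rank operators and not merely rank $1$ ones: this is immediate because $\Pi_Y\left(X,E\right)$ is a linear space, and every finite rank operator is a finite linear combination of rank $1$ operators, each of which lies in $\Pi_Y\left(X,E\right)$ by Proposition \ref{0709}.
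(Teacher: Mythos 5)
Your proposal is correct and matches the paper's own argument exactly: the paper derives this corollary precisely by combining Proposition \ref{0709} (finite rank operators are absolutely $Y$-summing) with Lemma \ref{propointro} (the continuous inclusion $\Pi _{Y}\left( X,E\right) \subset \mathcal{K}_{Y}\left( X,E\right) $). Nothing is missing.
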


\begin{proposition}
Let $X,W$ be Banach lattices, $E,F$ Banach spaces, $T\in \mathcal{L}\left(
E,F\right) ,$ $S\in \Pi _{Y}\left( W,E\right) $ and $R\in \mathcal{L}\left(
X,W\right) .$ Then $T\circ S\circ R\in \Pi _{Y}\left( X,F\right) $ and%
\begin{equation}
\left\Vert T\circ S\circ R\right\Vert _{\Pi _{Y}\left( X,F\right) }\leq
\left\Vert T\right\Vert \left\Vert S\right\Vert _{\Pi _{Y}\left( W,E\right)
}\left\Vert R\right\Vert .  \label{cff3}
\end{equation}
\end{proposition}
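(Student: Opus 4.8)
The plan is to verify the defining inequality (\ref{dessss}) for $T\circ S\circ R$ by peeling off the three operators one at a time, in the order left, middle, right: $T$ is handled by its boundedness together with the monotonicity and homogeneity of the scalar norm $\left\Vert\cdot\right\Vert_{\mathbb{R}^{n},Y}$, $S$ by its $Y$-summing inequality, and $R$ by a duality argument on the weak norm $\left\Vert\cdot\right\Vert_{w,\cdot,Y}$. So I would fix $n\in\mathbb{N}$ and $x_{1},\dots,x_{n}\in X$ and estimate $\left\Vert\left(TSRx_{1},\dots,TSRx_{n}\right)\right\Vert_{F^{n},Y}$.

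For the left factor, set $w_{j}:=SRx_{j}\in E$. By the definition (\ref{norma_X^n_Y}) of the norm on $F^{n}$, by $\left\Vert Tw_{j}\right\Vert_{F}\le\left\Vert T\right\Vert\left\Vert w_{j}\right\Vert_{E}$, and since $\left\Vert\cdot\right\Vert_{\mathbb{R}^{n},Y}$ is order-preserving on the positive cone and positively homogeneous,
\[
\left\Vert\left(Tw_{1},\dots,Tw_{n}\right)\right\Vert_{F^{n},Y}=\left\Vert\left(\left\Vert Tw_{1}\right\Vert_{F},\dots,\left\Vert Tw_{n}\right\Vert_{F}\right)\right\Vert_{\mathbb{R}^{n},Y}\le\left\Vert T\right\Vert\left\Vert\left(w_{1},\dots,w_{n}\right)\right\Vert_{E^{n},Y}.
\]
Next, applying the hypothesis $S\in\Pi_{Y}\left(W,E\right)$ to the vectors $Rx_{1},\dots,Rx_{n}\in W$ gives
\[
\left\Vert\left(SRx_{1},\dots,SRx_{n}\right)\right\Vert_{E^{n},Y}\le\left\Vert S\right\Vert_{\Pi_{Y}}\left\Vert\left(Rx_{1},\dots,Rx_{n}\right)\right\Vert_{w,W^{n},Y}.
\]

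The remaining, and main, step is to control $\left\Vert\left(Rx_{1},\dots,Rx_{n}\right)\right\Vert_{w,W^{n},Y}$ by $\left\Vert R\right\Vert\left\Vert\left(x_{1},\dots,x_{n}\right)\right\Vert_{w,X^{n},Y}$. Here I would pass to the adjoint $R^{\ast}:W^{\ast}\rightarrow X^{\ast}$ via $\left\langle Rx_{j},y^{\ast}\right\rangle=\left\langle x_{j},R^{\ast}y^{\ast}\right\rangle$. The point to handle with care is that one should not try to push $R$ through the supremum directly; instead, one uses $R^{\ast}\left(B_{W^{\ast}}\right)\subset\left\Vert R\right\Vert B_{X^{\ast}}$. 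Thus for $y^{\ast}\in B_{W^{\ast}}$ (the case $R=0$ being trivial) one may write $R^{\ast}y^{\ast}=\left\Vert R\right\Vert z^{\ast}$ with $z^{\ast}\in B_{X^{\ast}}$, and positive homogeneity of $\left\Vert\cdot\right\Vert_{Y}$ yields
\[
\left\Vert\left(\left\langle Rx_{1},y^{\ast}\right\rangle,\dots,\left\langle Rx_{n},y^{\ast}\right\rangle\right)\right\Vert_{Y}=\left\Vert R\right\Vert\left\Vert\left(\left\langle x_{1},z^{\ast}\right\rangle,\dots,\left\langle x_{n},z^{\ast}\right\rangle\right)\right\Vert_{Y}\le\left\Vert R\right\Vert\left\Vert\left(x_{1},\dots,x_{n}\right)\right\Vert_{w,X^{n},Y}.
\]
Taking the supremum over $y^{\ast}\in B_{W^{\ast}}$ gives the desired estimate $\left\Vert\left(Rx_{1},\dots,Rx_{n}\right)\right\Vert_{w,W^{n},Y}\le\left\Vert R\right\Vert\left\Vert\left(x_{1},\dots,x_{n}\right)\right\Vert_{w,X^{n},Y}$.

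Finally, chaining the three estimates produces
\[
\left\Vert\left(TSRx_{1},\dots,TSRx_{n}\right)\right\Vert_{F^{n},Y}\le\left\Vert T\right\Vert\left\Vert S\right\Vert_{\Pi_{Y}}\left\Vert R\right\Vert\left\Vert\left(x_{1},\dots,x_{n}\right)\right\Vert_{w,X^{n},Y},
\]
which shows $T\circ S\circ R\in\Pi_{Y}\left(X,F\right)$, and (\ref{cff3}) follows by taking the smallest admissible constant. I expect no genuine obstacle: everything reduces to monotonicity and homogeneity of the scalar $Y$-norm plus the elementary adjoint inclusion $R^{\ast}\left(B_{W^{\ast}}\right)\subset\left\Vert R\right\Vert B_{X^{\ast}}$, this last being the only step where some attention is needed.
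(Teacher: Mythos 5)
Your proof is correct and follows essentially the same route as the paper: peel off $T$ using boundedness together with monotonicity and homogeneity of $\left\Vert \cdot \right\Vert _{\mathbb{R}^{n},Y}$, apply the $Y$-summing inequality of $S$, and handle $R$ by duality on the weak norm. Your rescaling $R^{\ast }y^{\ast }=\left\Vert R\right\Vert z^{\ast }$ with $z^{\ast }\in B_{X^{\ast }}$ is just a rephrasing of the paper's observation that $w^{\ast }\circ R\in X^{\ast }$ with $\left\Vert w^{\ast }\circ R\right\Vert \leq \left\Vert R\right\Vert $, followed by enlarging and rescaling the supremum.
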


\begin{proof}
Consider $n\in 
%TCIMACRO{\U{2115} }%
%BeginExpansion
\mathbb{N}
%EndExpansion
$, take $x_{1},...,x_{n}\in X$ and denote by $c$ the expression\ $\left\Vert
\left( T\circ S\circ Rx_{1},...,T\circ S\circ Rx_{n}\right) \right\Vert
_{F^{n},Y}.$ Then 
\begin{eqnarray}
c &=&\left\Vert \left( \left\Vert T\circ S\circ Rx_{1}\right\Vert
_{F},...,\left\Vert T\circ S\circ Rx_{n}\right\Vert _{F}\right) \right\Vert
_{Y}  \notag \\
&&  \notag \\
&\leq &\left\Vert T\right\Vert \left\Vert \left( \left\Vert S\circ
Rx_{1}\right\Vert _{E},...,\left\Vert S\circ Rx_{n}\right\Vert _{E}\right)
\right\Vert _{Y}  \notag \\
&&  \notag \\
&=&\left\Vert T\right\Vert \left\Vert \left( S\circ Rx_{1},...,S\circ
Rx_{n}\right) \right\Vert _{E^{n},Y}  \notag \\
&&  \notag \\
&\leq &\left\Vert T\right\Vert \left\Vert S\right\Vert _{\Pi _{Y}\left(
W,E\right) }\left\Vert \left( Rx_{1},...,Rx_{n}\right) \right\Vert
_{w,W^{n},Y}.  \label{cff}
\end{eqnarray}%
\indent Observe now that, for each $w^{\ast }\in B_{W^{\ast }},$ we have
that $w^{\ast }\circ R\in X^{\ast }$ and $\left\Vert w^{\ast }\circ
R\right\Vert \leq \left\Vert R\right\Vert .$ Thus 
\begin{eqnarray}
\left\Vert \left( Rx_{1},...,Rx_{n}\right) \right\Vert _{w,W^{n},Y}
&=&\sup_{w^{\ast }\in B_{W^{\ast }}}\left\{ \left\Vert \left( \left\langle
Rx_{1},w^{\ast }\right\rangle ,...,\left\langle Rx_{n},w^{\ast
}\right\rangle \right) \right\Vert _{Y}\right\}  \notag \\
&&  \notag \\
&=&\sup_{w^{\ast }\in B_{W^{\ast }}}\left\{ \left\Vert \left( \left\langle
x_{1},w^{\ast }\circ R\right\rangle ,...,\left\langle x_{n},w^{\ast }\circ
R\right\rangle \right) \right\Vert _{Y}\right\}  \notag \\
&&  \notag \\
&\leq &\sup_{x^{\ast }\in X^{\ast }}\left\{ \left\Vert \left( \left\langle
x_{1},x^{\ast }\right\rangle ,...,\left\langle x_{n},x^{\ast }\right\rangle
\right) \right\Vert _{Y}:\left\Vert x^{\ast }\right\Vert \leq \left\Vert
R\right\Vert \right\}  \notag \\
&&  \notag \\
&=&\left\Vert R\right\Vert \sup_{x^{\ast }\in B_{X^{\ast }}}\left\{
\left\Vert \left( \left\langle x_{1},x^{\ast }\right\rangle
,...,\left\langle x_{n},x^{\ast }\right\rangle \right) \right\Vert
_{Y}\right\} .  \label{cff2}
\end{eqnarray}%
\indent From $\left( \ref{cff}\right) $ and $\left( \ref{cff2}\right) $ it
follows that $T\circ S\circ R\in \Pi _{Y}\left( X,F\right) $ and $\left( \ref%
{cff3}\right) $ holds.
\end{proof}

\bigskip

\end{document}